 \numberwithin{equation}{section}
\newtheorem{thm}{Theorem}[section]
\newtheorem{lem}{Lemma}[section]
\newtheorem{cor}{Corollary}[section]
\newtheorem{pro}{Proposition}[section]
\theoremstyle{definition}
\theoremstyle{remark}
\newtheorem{rem}{Remark}[section]
 \numberwithin{equation}{section}
\begin{document}
 \title[The Bochner-Type Formula and The First Eigenvalue of the sub-Laplacian]{The Bochner-Type Formula and The First Eigenvalue of the sub-Laplacian on a Contact Riemannian Manifold}
 \author{Feifan Wu} \thanks{Department of Mathematics, Zhejiang University, Hangzhou 310027, P. R. China, Email: wesleywufeifan08@sina.com.} \author{Wei Wang} \thanks{Department of Mathematics, Zhejiang University, Hangzhou 310027, P. R. China, Email: wwang@zju.edu.cn.}
 \begin{abstract}
 Contact Riemannian manifolds, with not necessarily integrable complex structures, are the generalization of pseudohermitian manifolds in CR geometry. The Tanaka-Webster-Tanno connection on such a manifold  plays the role of Tanaka-Webster connection in the pseudohermitian case. We prove the contact Riemannian version of the pseudohermitian Bochner-type formula, and generalize the CR Lichnerowicz theorem about the sharp lower bound for the first nonzero eigenvalue of the sub-Laplacian to the contact Riemannnian case.
 \end{abstract}
\maketitle
\section{Introduction}
  Lichnerowicz \cite{Li1} obtained a sharp lower bound for the first eigenvalue of the Laplacian-Beltrami operator on a compact Riemannian manifold with a lower Ricci bound, and Obata \cite{Ob1} characterized the case of equality. On a pseudohermitian manifold, the sub-Laplacian is the counterpart of the Laplacian-Beltrami operator. The CR analogue of the Lichnerowicz theorem states that   for a $(2n+1)$-dimensional pseudohermitian manifold, $n\ge3$, satisfying \begin{equation}\label{lic}
  Ric(X,\overline{X})+\frac{n+1}{2}Tor(X,X){\ge}\kappa{h}(X,\overline{X}),
  \end{equation}
  the first nonzero eigenvalue of the sub-Laplacian is greater than or equal to $n\kappa/(n+1)$. This result was first proved by  Greenleaf \cite{G1}. But due to a mistake in calculation pointed out in \cite{CC1} and \cite{GL1}, the coefficient $\frac{n+1}{2}$ in \eqref{lic} was mistaken to be $\frac{n}{2}$. The corresponding results for $n=2$ and $n=1$ were obtained later in \cite{LL1} and \cite{Ch1}, respectively. The CR Obata-type theorem was conjectured in \cite{CC1}, which states that if $n\kappa/(n+1)$ is an eigenvalue of the sub-Laplacian on a pseudohermitian manifold, then it is the standard CR structure on the unit sphere in $\mathbb{C}^{n+1}$. This is proved under some additional conditions (cf. \cite{CC1}, \cite{CC2}, \cite{IV2} and references therein) and without conditions in \cite{LW1}. There is also a quaternionic contact version of Lichnerowicz theorem \cite{IPV1} (see e.g. \cite{BI1}, \cite{IV1} and \cite{Wa2} for the quaternionic contact manifolds). In this paper, we generalize the CR Lichnerowiecz theorem to the contact Riemannian case.

 A $(2n+1)$-dimensional manifold $M$ is called a {\it contact manifold} if it has a real 1-form $\theta$, called a {\it contact form}, such that $\theta{\wedge}d\theta^n\ne0$ everywhere on $M$. There exists a unique vector field $T$, {\it the Reeb vector field}, such that $\theta(T)=1$ and $T\lrcorner{d\theta}=0$. It is well known that given a contact manifold $(M,\theta)$, there are a Riemannian metric $h$ and a $(1,1)$-tensor field $J$ on $M$ such that
 \begin{equation}\label{eq2.1}
  \begin{aligned}
  &h(X,T)=\theta(X),\\
  &J^2=-Id+\theta\otimes{T},\\
  &d\theta(X,Y)=h(X,JY),
  \end{aligned}
 \end{equation}
 for any vector fields $X$ and $Y$ (cf. p. 278 in \cite{BD1}). We call $J$ an {\it almost complex structure}. Once $h$ is fixed, $J$ is uniquely determined. $(M,\theta,h,J)$ is called a {\it contact Riemannian manifold}.

 Let $TM$ be the tangent bundle and $\mathbb{C}TM$ be its complexification. Denote $HM:=Ker(\theta)$, the horizontal subbundle. $\mathbb{C}HM$ has a unique subbundle $T^{(1,0)}M$ such that $JX=iX$ for any $X\in{\Gamma(T^{(1,0)}M)}$. Here and in the following, $\Gamma(S)$ denotes the space of all sections of a vector bundle $S$. Set $T^{(0,1)}M=\overline{T^{(1,0)}M}$. For any $X\in{\Gamma(T^{(0,1)}M)}$, we have $JX=-iX$. $J$ is called {\it integrable} if $\big[\Gamma(T^{(1,0)}M),\Gamma(T^{(1,0)}M)\big]{\subset}\Gamma(T^{(1,0)}M).$
 In particular if $J$ is integrable, $J$ is called a {\it CR structure} and $(M,\theta,h,J)$ is called a {\it pseudohermitian manifold}. On a contact Riemannian manifold  there exists a distinguished connection  introduced in \cite{T1}, called the {\it Tanaka-Webster-Tanno connection} (or {\it TWT connection} briefly). In the pseudohermitian case, this connection is exactly the Tanaka-Webster connection. The Tanno tensor is defined as $Q=\nabla{J}$. $J$ is integrable if and only if the Tanno tensor $Q\equiv0$ (cf. Proposition 2.1 in \cite{T1}). We can also define the sub-Laplacian operator $\Delta_{b}$. Since there is no obstruction to the existence of the almost complex structure $J$, contact Riemannian structures exist naturally on any contact manifold  and analysis on it has potential applications to the geometry of contact manifolds (cf. \cite{BaD1}, \cite{Pe1} and \cite{Se1} and references therein).

 Since the Tanno tensor $Q$ is a (1,2)-tensor, $Q_X:=Q(X,\cdot)$ and $\nabla{Q}(X,X)$ are (1,1)-tensors. Define invariants of contact Riemannian structures:
 \begin{equation}\label{Q123}
 \begin{aligned}
 &Q_1(X,X)=-2Re\big(i{\cdot}{\rm trace}\big\{Y{\longrightarrow}\nabla_Y{Q}(X,X)\big\}\big),\\
 &Q_2(X,X)={\langle}Q_X,Q_{\bar{X}}{\rangle},\\
 &Q_3(X,X)={\rm trace}\big\{Y{\longrightarrow}Q_X{\circ}Q_{\bar{X}}(Y)\big\},
 \end{aligned}
 \end{equation}
 where $\langle\cdot,\cdot\rangle$ is the inner product on $(1,1)$-tensor induced by $h$, and
\begin{equation}\label{tor}
Tor(X,X)=2Re\big(i{h}(\tau_{\ast}X,X)\big),
\end{equation}
for any $X\in{T^{(1,0)}M}$, where $\tau_{\ast}$ is the {\it Webster torsion} defined as
$\tau_{\ast}(X)=\tau(T,X)$, $X\in{TM}.$

 Let $\nabla{u}$ be the gradient of $u$ with respect to the metric $h$, i.e., $h(\nabla{u},X)=Xu$ for any $X\in\Gamma(TM)$. Set $\nabla_{H}{u}=\pi_H\nabla{u}$, where $\pi_{H}$ is the orthogonal projection to $HM$. And $\partial_{b}u$ is the orthogonal projection of $\nabla_{H}{u}$ to $T^{(1,0)}M$. Our main result is as follows.
\begin{thm}\label{thm1.2}
On a $(2n+1)$-dimensional contact Riemannian manifold, $n\ge2$, we have the contact Riemannian Bochner-type formula
\begin{equation}\label{BTF}
\begin{aligned}
\Delta_{b}(\|\partial_{b}u\|^2)&=2\|\nabla^2u\|^2-4\nabla^2u(T,J\nabla_{H}{u})-2nTor(\partial_{b}u,\partial_{b}u)
+2Ric\bigg(\partial_{b}u,\overline{\partial_{b}}u\bigg)\\
&\ \ \ +h\bigg(\nabla_{H}{u},\nabla_{H}(\Delta_{b}u)\bigg)+Q_1(\partial_{b}u,\partial_{b}u)
-\frac{1}{2}Q_2(\partial_{b}u,\partial_{b}u),
\end{aligned}
\end{equation}
for any $u{\in}C_0^{\infty}(M)$.
\end{thm}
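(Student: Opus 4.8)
The plan is to carry out the computation in a local unitary frame adapted to the TWT connection. Choose a local frame $\{T, e_1, \dots, e_n, e_{\bar 1}, \dots, e_{\bar n}\}$ of $\mathbb{C}TM$ with $e_\alpha \in \Gamma(T^{(1,0)}M)$, $e_{\bar\alpha} = \overline{e_\alpha}$, and $h(e_\alpha, e_{\bar\beta}) = \delta_{\alpha\beta}$, and arrange it to be parallel at the point where the identity is checked, so that all terms containing a bare first derivative of a frame vector drop out there. Writing $u_\alpha = e_\alpha u$, $u_0 = Tu$, and denoting by $u_{\alpha\beta}, u_{\alpha\bar\beta}, u_{0\alpha}, \dots$ the components of the covariant Hessian $\nabla^2 u$, one has $\partial_b u = \sum_\alpha u_{\bar\alpha} e_\alpha$, hence $\|\partial_b u\|^2 = \sum_\alpha u_\alpha u_{\bar\alpha}$, while $\Delta_b u = \sum_\alpha (u_{\alpha\bar\alpha} + u_{\bar\alpha\alpha})$ up to the chosen sign convention. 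The first task is simply to record these component expressions, since the whole identity is assembled by bookkeeping the indices produced when the second-order operator $\Delta_b$ is applied to the quadratic expression $\sum_\alpha u_\alpha u_{\bar\alpha}$.

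Next I would expand $\Delta_b\big(\sum_\alpha u_\alpha u_{\bar\alpha}\big)$ by the Leibniz rule. This splits into a ``square'' part, in which one horizontal derivative lands on each factor and which produces $\sum_{\alpha,\beta} 2\big(u_{\alpha\beta}u_{\bar\alpha\bar\beta} + u_{\alpha\bar\beta}u_{\bar\alpha\beta}\big)$, and a ``transport'' part, in which both derivatives land on one factor and which produces the third covariant derivatives of $u$ paired with $u_{\bar\alpha}$ or $u_\alpha$. The square part accounts for the horizontal components of $2\|\nabla^2 u\|^2$; completing it to the full Hessian norm, together with the $T$-row contributions, is exactly what forces the correction $-4\nabla^2 u(T, J\nabla_H u)$. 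The transport part is then reorganized so that the two Laplacian derivatives act last, which yields the backbone term $h\big(\nabla_H u, \nabla_H(\Delta_b u)\big)$, at the cost of the commutators examined next.

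The decisive step, and the one I expect to be the main obstacle, is the commutation of the covariant derivatives for the TWT connection, whose torsion and nonvanishing Tanno tensor $Q = \nabla J$ both contribute. I would first establish the first-order commutator, in which $u_{\alpha\bar\beta} - u_{\bar\beta\alpha}$ produces a multiple of the Reeb derivative $u_0$ through the Levi form $d\theta$; this is the mechanism that feeds the $T$-Hessian components and, once paired with the Webster torsion $\tau_*$, delivers $-4\nabla^2 u(T, J\nabla_H u)$ and $-2n\,Tor(\partial_b u, \partial_b u)$. I would then establish the third-order commutators, where interchanging the order of differentiation produces curvature, torsion, and Tanno contributions; tracing the curvature against $\partial_b u$ gives $2Ric(\partial_b u, \overline{\partial_b} u)$. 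The genuinely new difficulty, absent from Greenleaf's pseudohermitian computation, is to collect every occurrence of $Q$: the terms in which a single covariant derivative falls on $Q$ must be shown to assemble into $Q_1(\partial_b u, \partial_b u)$, while the terms quadratic in $Q$ arising from successive commutations must combine into $-\tfrac12 Q_2(\partial_b u, \partial_b u)$. Checking that the potential $Q_3$ contractions cancel and that the numerical coefficients $1$ and $-\tfrac12$ emerge correctly is the delicate bookkeeping at the heart of the argument.
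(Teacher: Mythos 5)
Your overall skeleton---expand $\Delta_b\big(\sum_\alpha u_\alpha u_{\bar\alpha}\big)$ by Leibniz into a ``square'' part and a ``transport'' part, then use inner and outer commutation formulae to move the two trace derivatives to the end, collecting curvature into $Ric$, the Reeb contributions into $\nabla^2u(T,J\nabla_Hu)$ and $Tor$, and the Tanno contributions into $Q_1$ and $Q_2$---is exactly the route the paper takes (Lemma \ref{lem4.1} together with \eqref{ICF} and \eqref{OCF}). The genuine gap is the normalization you build the computation on, and it sits precisely at the point the paper singles out as the main difficulty. For an adapted unitary frame $\{e_\alpha\}$ of $T^{(1,0)}M$ the TWT connection does \emph{not} preserve $T^{(1,0)}M$ when $J$ is non-integrable: by \eqref{QBD} one has $\Gamma_{\alpha\beta}^{\bar{\gamma}}=-\tfrac{i}{2}Q_{\beta\alpha}^{\bar{\gamma}}$, and this mixed block of the connection form is a component of the tensor $Q$, hence transforms tensorially under unitary changes of frame and cannot be made to vanish at a point unless $Q$ vanishes there. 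So ``arrange the frame to be parallel at the point'' is impossible; at best you can kill the $\mathfrak{u}(n)$-block $\Gamma_{\cdot\beta}^{\gamma}$ at one point. If you nonetheless discard ``all terms containing a bare first derivative of a frame vector,'' you discard exactly the $\Gamma_{\alpha\beta}^{\bar{\gamma}}u_{\bar{\gamma}}$-type contributions from which part of the $Q_1$ and the $-\tfrac12 Q_2$ terms are assembled, and the bookkeeping of the $Q$-corrections becomes inconsistent: some $Q$'s would enter through your commutators while others have been erased at the source.

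Two concrete consequences for your plan. First, even the Leibniz step is not automatic in this setting: the identity $W_{\alpha}\big(\|\partial_{b}u\|^2\big)=u_{\alpha\lambda}u_{\bar{\lambda}}+u_{\lambda}u_{\alpha\bar{\lambda}}$ holds not because the connection coefficients vanish but because the unwanted terms cancel in pairs via the symmetries $\Gamma_{\alpha\beta}^{\gamma}=-\Gamma_{\alpha\bar{\gamma}}^{\bar{\beta}}$ and $Q_{\beta\alpha}^{\bar{\gamma}}=-Q_{\gamma\alpha}^{\bar{\beta}}$ of \eqref{eq2.9}; this cancellation is the content of the paper's Lemma \ref{lem4.1} and must be checked, not assumed. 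Second, $Q$ does not enter only through commutators of third derivatives: it already sits inside the second-order components, $u_{\alpha\lambda}=W_{\alpha}(u_{\lambda})-\Gamma_{\alpha\lambda}^{\beta}u_{\beta}+\tfrac{i}{2}Q_{\lambda\alpha}^{\bar{\beta}}u_{\bar{\beta}}$ by \eqref{SD}, while the outer commutation formulae \eqref{OCF} contribute both a $\tfrac{i}{2}Q_{\alpha\gamma,\bar{\rho}}^{\bar{\beta}}u_{\bar{\beta}}$ term and a term quadratic in $Q$. Your stated targets---first derivatives of $Q$ assembling into $Q_1$, quadratic terms into $-\tfrac12Q_2$, no surviving $Q_3$---do match the final formula, but they are only reachable if the mixed connection block is carried through every step; as written, your normalization would reproduce the pseudohermitian computation plus an incomplete and uncontrolled subset of the $Q$-corrections.
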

\begin{thm}\label{thm1.2b}
Suppose that on a compact $(2n+1)$-dimensional contact Riemannian manifold, $n\ge2$, there exists some positive constant $\kappa$ such that
\begin{equation}\label{eq1.2}
Ric(X,\overline{X})-(n+1)Tor(X,X)+\frac{1}{2}Q_1(X,X)-\frac{2n+7}{8(n-1)}Q_2(X,X)
+\frac{3}{2(n-1)}Q_3(X,X){\ge}\kappa{h}(X,\overline{X}),
\end{equation}
for any $X\in{T^{(1,0)}M}$, where $Ric$ is the Ricci tensor. Then the first nonzero eigenvalue $\lambda_1$ of $\Delta_{b}$ satisfies
$$\lambda_1\ge\frac{{\kappa}n}{n+1}.$$
\end{thm}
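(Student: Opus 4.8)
The plan is to follow the classical Lichnerowicz--Greenleaf scheme, using the Bochner-type formula of Theorem~\ref{thm1.2} as the engine. Let $u$ be an eigenfunction for the first nonzero eigenvalue, normalized so that $\Delta_b u=-\lambda_1 u$, and integrate \eqref{BTF} over the compact manifold $M$. Since the integral of a sub-Laplacian of a function over a compact manifold vanishes, the left-hand side drops out and I obtain an integral identity. In it, the term $\int_M h(\nabla_H u,\nabla_H(\Delta_b u))$ becomes $-\lambda_1\int_M\|\nabla_H u\|^2=-\lambda_1^2\int_M u^2$ after inserting the eigenvalue equation and using $\int_M\|\nabla_H u\|^2=-\int_M u\,\Delta_b u=\lambda_1\int_M u^2$. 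Thus the positive term $2\int_M\|\nabla^2 u\|^2$, the mixed Reeb term $-4\int_M\nabla^2 u(T,J\nabla_H u)$, and the torsion, curvature and Tanno terms are all balanced against $\lambda_1^2\int_M u^2$.

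The next step is to bound $\|\nabla^2 u\|^2$ from below. Working in a local adapted unitary frame $\{e_\alpha,e_{\bar\alpha},T\}$, I would decompose the Hessian into its horizontal holomorphic part $u_{\alpha\beta}$, its horizontal mixed part $u_{\alpha\bar\beta}$, and the parts involving the Reeb direction $T$. Applying Cauchy--Schwarz to the trace of the mixed block yields $|u_{\alpha\bar\beta}|^2\ge\frac1n|u_{\alpha}{}^{\alpha}|^2=\frac1n(\Delta_b u)^2=\frac{\lambda_1^2}{n}u^2$, with the holomorphic block $|u_{\alpha\beta}|^2$ and the trace-free part of the mixed block left over as nonnegative remainders. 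This is the step that ultimately produces the factor $n/(n+1)$, exactly as the dimension enters in the Riemannian case.

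The main obstacle is the mixed term $-4\int_M\nabla^2 u(T,J\nabla_H u)$, which has no Riemannian analogue and which, unlike the other terms, is not controlled directly by the curvature hypothesis \eqref{eq1.2}; note in particular that $Q_3$ appears in \eqref{eq1.2} but not in \eqref{BTF}. I would therefore derive a second integral identity for this term by integrating by parts and commuting the Reeb derivative $T$ past the horizontal covariant derivatives. Since the TWT connection carries both Webster torsion and a nonvanishing Tanno tensor $Q=\nabla J$, the relevant commutation relations produce torsion and $Q$ corrections; carrying these through is what generates the extra torsion needed to pass from the $-2n\,Tor$ of \eqref{BTF} to the torsion term $-(n+1)\,Tor$ of \eqref{eq1.2}, together with the $Q_2$ and $Q_3$ contributions. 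This commutation and its careful bookkeeping is where I expect the bulk of the technical difficulty to lie.

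Finally, I would substitute this second identity into the integrated Bochner identity and organize the result as a sum of squares plus the curvature--torsion--Tanno combination appearing in \eqref{eq1.2} plus a multiple of $\lambda_1^2\int_M u^2$. Choosing the weight of the second identity optimally, the remaining cross-terms involving the Reeb-direction Hessian components would be absorbed into complete squares; the denominators $n-1$ and the coefficients $\tfrac{2n+7}{8(n-1)}$ and $\tfrac{3}{2(n-1)}$ in \eqref{eq1.2} are exactly what one expects from such a completion of squares, and positivity of the resulting squares is what forces the restriction $n\ge2$. Applying \eqref{eq1.2} with $X=\partial_b u$ then bounds the curvature combination below by $\kappa\,h(\partial_b u,\overline{\partial_b u})$, a constant multiple of $\|\nabla_H u\|^2$; after discarding the nonnegative square terms and using $\int_M\|\nabla_H u\|^2=\lambda_1\int_M u^2$, the identity reduces to an inequality of the form $0\ge\bigl(\kappa-\tfrac{n+1}{n}\lambda_1\bigr)\,\lambda_1\int_M u^2$, whence $\lambda_1\ge\kappa n/(n+1)$.
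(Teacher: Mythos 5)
Your overall strategy is the one the paper actually follows: integrate the Bochner-type formula over $M$ so the left-hand side vanishes, convert $\int_M h(\nabla_{H}u,\nabla_{H}(\Delta_{b}u))\,dV$ into $-\lambda_1^2\int_Mu^2\,dV$, control the Reeb cross term $\nabla^2u(T,J\nabla_{H}u)$ by auxiliary integral identities obtained from commutation and integration by parts (and you correctly anticipate that this is where $Q_3$ and the shift from the $-2n\,Tor$ of \eqref{BTF} to the $-(n+1)\,Tor$ of \eqref{eq1.2} must come from), apply Cauchy--Schwarz to the mixed Hessian block, optimize a weight, and finally invoke \eqref{eq1.2} with $X=\partial_bu$. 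This matches Sections 5 and 6 of the paper.

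There is, however, one concrete step in your sketch that is false as written. You claim $|u_{\alpha\bar\beta}|^2\ge\frac{1}{n}\big|\sum_{\alpha}u_{\alpha\bar\alpha}\big|^2=\frac{1}{n}(\Delta_bu)^2$. The trace of the mixed block is not $\Delta_bu$: by the commutation formula $u_{\alpha\bar\beta}-u_{\bar\beta\alpha}=2ih_{\alpha\bar\beta}u_0$ in \eqref{SDCF} one has $\sum_{\alpha}u_{\alpha\bar\alpha}=\frac{1}{2}\Delta_bu+inu_0$, hence $\big|\sum_{\alpha}u_{\alpha\bar\alpha}\big|^2=\frac{1}{4}(\Delta_bu)^2+n^2u_0^2$, and the Cauchy--Schwarz step leaves an uncontrolled $u_0^2$ contribution rather than a clean multiple of $\lambda_1^2u^2$. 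This is precisely why the paper needs \emph{two} integral identities rather than the single one you propose: identity \eqref{eq5.1} is the commutation/integration-by-parts identity you describe (and is the source of the $Q_2$, $Q_3$ terms), while identity \eqref{eq5.2} relates $\int_Mi(u_{0\bar\alpha}u_{\alpha}-u_{0\alpha}u_{\bar\alpha})\,dV$ to $\big|\sum_{\alpha}u_{\alpha\bar\alpha}\big|^2$, $(\Delta_bu)^2$ and the torsion. The paper takes the combination of $C$ times \eqref{eq5.1} and $(1-C)$ times \eqref{eq5.2} with $C=\frac{3n}{4n+2}$ chosen exactly so that the $\big|\sum_{\alpha}u_{\alpha\bar\alpha}\big|^2$ terms cancel against the Cauchy--Schwarz lower bound; only then does the argument close as you describe. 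So your plan is workable in outline, but the identification of $\sum_{\alpha}u_{\alpha\bar\alpha}$ with $\Delta_bu$ is an error, and the second identity \eqref{eq5.2} (or some equivalent device eliminating the $u_0$ terms) is a necessary ingredient that your proposal does not supply.
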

Note that the coefficients of $\ Tor$ in \eqref{BTF} and \eqref{eq1.2} are different from that \eqref{lic} by a factor $-2$ (cf. \cite{DT1} and \cite{LW1}) . This is because that in our definition \eqref{eq2.1}, $d\theta(X,Y)=h(X,JY)$, while in psuedohermitian case, people usually use $d\theta(X,Y)=2h(JX,Y)= -2 h(X,JY)$. When $Q\equiv 0$, Theorem \ref{thm1.2} and \ref{thm1.2b} coincide with the CR Bochner-type formula and CR Lichnerowiecz theorem, respectively (see e.g. \cite{CC1}, \cite{DT1}, \cite{GL1}, \cite{G1}, \cite{LW1}). It is quite interesting to characterize the equality case of \eqref{eq1.2}.

In Section 2, we introduce some basic preliminaries, including the TWT connection, the torsion tensor, the curvature tensor and the Tanno tensor. If we choose an orthonormal $T^{(1,0)}M$ frame, there are some simpler relations for the connection coefficients, the Tanno tensor and the structure equations, which will make our calculation easier.

When given an orthonormal $T^{(1,0)}M$ frame, we have $\Gamma_{\alpha\beta}^{\bar{\gamma}}=-\frac{i}{2}Q_{\beta\alpha}^{\bar{\gamma}}$, which vanish in the pseudohermitian case. But in the general case, it may not always vanish. Therefore there exists extra terms involving such connection coefficients in our formulae, e.g. the Bochner-type formula and various integral identities, which will make our calculation more complicated than the pseudohermitian case. The main difficulties of generalizing results to the contact Riemannian case come from handling such extra terms.

In section 3, we introduce the second- and third-order covariant derivatives and their commutation formulae with respect to an orthonormal $T^{(1,0)}M$ frame. In Section 4, we prove the Bochner-type formula on a contact Riemannian manifold. This formula differs from the pseudohermitian case by terms involving the Tanno tensor. And it coincides with the CR Bochner-type formula (cf. e.g. Proposition 9.5 in \cite{DT1} or Theorem 6 in \cite{LW1}) when the almost complex structure $J$ is integrable. Similarly to pseudohermitian case, the term $\nabla^2u(T,J\nabla_{H}{u})$ in the Bochner-type formula can be controlled by using two integral identities. But here, in one identity, we have to use another identity to handle extra terms depending on the Tanno tensor $Q$. It is done in Section 5. In Section 6, with the preparation above, we prove the main Theorem \ref{thm1.2b}.

\section{Connection coefficients, torsions and curvatures on contact Riemannian manifolds}
\subsection{TWT connection, the Tanno tensor and the orthonormal $T^{(1,0)}M$ frame}
  \begin{pro}(cf. (7)-(10) in \cite{BD1})\label{prop2.0a}
  On a contact Riemannian manifold $(M,\theta,h,J)$, there exists a unique linear connection such that
  \begin{equation}\label{TWT}
  \begin{split}
  &\nabla{\theta}=0,\quad\nabla{T}=0,\\
  &\nabla{h}=0,\\
  &\tau(X,Y)=2d\theta(X,Y)T,{\quad}X,Y\in{\Gamma(HM)},\\
  &\tau(T,JZ)=-J{\tau(T,Z)},{\quad}Z\in{{\Gamma(TM)}},
  \end{split}
  \end{equation}
  where $\tau$ is the torsion of $\nabla$.
  \end{pro}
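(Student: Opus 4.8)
The plan is to follow the classical template for canonical connections: establish \emph{uniqueness} by a Koszul-type computation, then \emph{existence} by correcting the Levi--Civita connection and checking the axioms one by one.

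First I would record that, given $\nabla h=0$, the two conditions $\nabla\theta=0$ and $\nabla T=0$ are equivalent (since $\theta=h(T,\cdot)$), so every admissible $\nabla$ parallelises $T$ and preserves the splitting $TM=HM\oplus\mathbb{R}T$: for $X,Y\in\Gamma(HM)$ one has $\theta(\nabla_XY)=Xh(T,Y)-h(\nabla_XT,Y)=0$, hence $\nabla_XY\in\Gamma(HM)$ and $\nabla_XT=0$. Consequently the vertical part of the torsion identity $\tau(X,Y)=2d\theta(X,Y)T$ reduces to the Cartan relation $-\theta([X,Y])=2d\theta(X,Y)$ and holds automatically, while $\tau(T,\cdot)=\nabla_T(\cdot)-[T,\cdot]$ is seen to be a \emph{horizontal} endomorphism $\tau_{\ast}$ of $HM$, the Webster torsion.

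For uniqueness I would then run the usual cyclic-permutation argument on $\nabla h=0$ to obtain the master formula expressing $2h(\nabla_XY,Z)$ through $h$, the Lie brackets, and the torsion $\tau$. When $X,Y,Z\in\Gamma(HM)$ every torsion term involves $\tau$ on horizontal pairs, which is the prescribed vertical field $2d\theta(\cdot,\cdot)T$ and hence pairs trivially with the horizontal $Z$; since $\nabla_XY$ is horizontal, this determines $\nabla_XY$ completely. The one place the formula degenerates is the Reeb direction $\nabla_T|_{HM}$, where the unknown $\tau_{\ast}$ itself reappears on the right-hand side: there metric compatibility fixes only the $h$-symmetric part of $\tau_{\ast}$, and the fourth axiom $\tau_{\ast}(JX)=-J\tau_{\ast}(X)$ must be invoked to control the remaining part.

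For existence I would set $\nabla=D+A$, with $D$ the Levi--Civita connection of $h$ and $A(X,\cdot)$ the contorsion tensor associated with the torsion singled out above (equivalently, Tanno's explicit correction assembled from $J$, $\theta$, $T$ and $\tfrac12\mathcal{L}_TJ$). Because $A(X,\cdot)$ is $h$-skew by construction, $\nabla h=0$ is automatic, and I would verify $\nabla T=0$, $\nabla\theta=0$ and the two torsion identities directly, using $d\theta(X,Y)=h(X,JY)$ together with the standard contact-metric identity expressing $D_XT$ in terms of $J$ and $\tfrac12\mathcal{L}_TJ$. I expect the genuine obstacle to be exactly the Reeb-direction bookkeeping: one must check that the $h$-symmetric data imposed by $\nabla h=0$ and the $J$-anticommuting condition on $\tau_{\ast}$ are mutually compatible and jointly leave no residual freedom in $\nabla_T|_{HM}$, so that both the uniqueness argument closes and the fourth axiom is genuinely verified for the constructed $\nabla$. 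The horizontal part, by contrast, is routine once the master Koszul formula is written down.
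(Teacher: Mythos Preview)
The paper does not prove this proposition at all; it is quoted from Blair--Dragomir \cite{BD1} (equations (7)--(10) there), so there is no in-paper argument to compare your sketch against. Your overall template---Koszul-type identity for uniqueness, Levi--Civita plus an explicit contorsion for existence---is precisely the standard route taken in \cite{T1} and \cite{BD1}.

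However, the checkpoint you yourself flag in the Reeb direction is a genuine obstruction, not mere bookkeeping, and your hoped-for conclusion is false for $n\ge 2$. If $\nabla$ and $\nabla'$ both satisfy \eqref{TWT} and $B=\nabla'-\nabla$, your cyclic argument correctly forces $B_XY=0$ for $X,Y\in\Gamma(HM)$ and $B_XT=0$, leaving only $B_T|_{HM}$. Metric compatibility makes $B_T$ $h$-skew; the fourth axiom makes it anticommute with $J$. But the space of $h$-skew, $J$-anticommuting endomorphisms of $HM$ has real dimension $n(n-1)$: in a frame as in Proposition~\ref{pro2.0} such a $B_T$ is encoded by an arbitrary complex antisymmetric $n\times n$ matrix $(B_T)_{\alpha}^{\bar\beta}$. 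For $n\ge 2$ this is nonzero, and on the flat Heisenberg model one checks directly that $\nabla+B$ still satisfies all of \eqref{TWT}. So the axioms as written do \emph{not} ``jointly leave no residual freedom in $\nabla_T|_{HM}$''.

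What closes the gap in the literature is an extra normalisation in the Reeb direction, equivalent to $\nabla_TJ=0$ (i.e.\ $Q(\,\cdot\,,T)=0$). Since $(\nabla'_TJ)Y-(\nabla_TJ)Y=B_T(JY)-JB_TY=-2JB_TY$ when $B_T$ anticommutes with $J$, requiring $\nabla_TJ=0$ for both connections forces $B_T=0$. In \cite{T1} and \cite{BD1} this condition is either listed among the axioms or follows from the explicit formula defining $\nabla$; the present paper records it only afterwards, as $Q_{i0}^k=0$ in Proposition~\ref{van}. You should therefore either locate that extra condition in the cited sources and add it to your uniqueness argument, or bypass the issue by taking Tanno's explicit formula as the definition and verifying \eqref{TWT} directly.
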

  $\nabla$ is called the {\it TWT connection}.
  The {\it Tanno tensor} $Q$ (cf. (10) in \cite{DT1}) is defined as
  \begin{equation}\label{eq2.8}
  Q(X,Y):=(\nabla_YJ)X,{\quad}{\text for\ }X,Y\in{\Gamma(TM)}.
  \end{equation}

  We extend $h$, $J$ and $\nabla$ to the complexified tangent bundle by $\mathbb{C}$-linear extension:
  \begin{equation}
  \begin{aligned}
  h(X_1+iY_1,X_2+iY_2)&:=h(X_1,X_2)-h(Y_1,Y_2)+i\big(h(X_1,Y_2)+h(X_2,Y_1)\big),\\
  \notag J(X_1+iY_1)&:=JX_1+iJY_1,\\
  \nabla_{(X_1+iY_1)}(X_2+iY_2)&:=\nabla_{X_1}X_2-\nabla_{Y_1}Y_2+i\big(\nabla_{X_1}Y_2+\nabla_{Y_1}X_2\big).
  \end{aligned}
  \end{equation}
  for any $Z_j=X_j+iY_j\in{\mathbb{C}TM}$, $j=1,2$.
\begin{pro}\label{pro2.0}
Let $W_0:=T$, the Reeb vector. We can choose a local $T^{(1,0)}M$-frame $\{W_j\}=\{W_a,W_0\}=\{W_{\alpha},W_{\bar{\alpha}},T\}$ with $W_{\alpha}\in{T^{(1,0)}M}$, $W_{\bar{\alpha}}=\overline{W_{\alpha}}{\in}T^{(0,1)}M$ on  a neighborhood $U$ such that
$$h_{\alpha\bar{\beta}}=\delta_{\alpha\bar{\beta}};{\qquad}h_{\bar{\alpha}\beta}=\delta_{\bar{\alpha}\beta}=\delta_{\alpha\bar{\beta}};
{\qquad}h_{\alpha\beta}=0.$$
We call this frame an {\it orthonormal $T^{(1,0)}M$-frame}.
\end{pro}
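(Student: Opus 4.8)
The plan is to reduce this to the pointwise linear algebra of the $\mathbb{C}$-bilinear extension of $h$ on the fibers of $T^{(1,0)}M$, and then to realize the frame by a smooth Gram--Schmidt procedure. Two facts must be extracted from \eqref{eq2.1}: that $h$ restricted to $T^{(1,0)}M\times T^{(1,0)}M$ vanishes identically, which gives $h_{\alpha\beta}=0$ for \emph{any} choice of frame, and that the Hermitian form $X,Y\mapsto h(X,\overline{Y})$ on $T^{(1,0)}M$ is positive definite, which is exactly the hypothesis Gram--Schmidt requires.

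First I would record the algebraic consequences of \eqref{eq2.1}. Since $d\theta$ is antisymmetric and $h$ is symmetric, the relation $d\theta(X,Y)=h(X,JY)$ forces $h(JX,Y)=-h(X,JY)$, so $J$ is skew-adjoint for $h$; combined with $J^2=-\mathrm{Id}+\theta\otimes T$ and $h(X,T)=\theta(X)$ this yields $h(JX,JY)=h(X,Y)-\theta(X)\theta(Y)$, and in particular $J$ is an $h$-isometry on $HM$. For the vanishing statement, take $X,Y\in T^{(1,0)}M$, so $JX=iX$ and $JY=iY$; then by $\mathbb{C}$-bilinearity together with the isometry on $HM$ one gets $h(X,Y)=h(JX,JY)=-h(X,Y)$, whence $h(X,Y)=0$, i.e. $h_{\alpha\beta}=0$.

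Next I would establish positive-definiteness of $\langle X,Y\rangle:=h(X,\overline{Y})$. The $\mathbb{C}$-bilinear extension of the real symmetric $h$ satisfies $h(\overline{Z},\overline{W})=\overline{h(Z,W)}$, so $\langle\cdot,\cdot\rangle$ is Hermitian symmetric. Writing a general $X\in T^{(1,0)}M$ as $X=\tfrac12(Z-iJZ)$ with $Z\in HM$ real, a short computation using $h(Z,JZ)=d\theta(Z,Z)=0$ and $h(JZ,JZ)=h(Z,Z)$ gives $\langle X,X\rangle=\tfrac12 h(Z,Z)\ge 0$, with equality only when $Z=0$. Hence the form is positive definite.

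Finally, $T^{(1,0)}M$ is a smooth complex subbundle of $\mathbb{C}HM$, so on a neighborhood $U$ it admits a smooth local frame $\{V_\alpha\}$; applying Gram--Schmidt orthonormalization with respect to $\langle\cdot,\cdot\rangle$ yields a smooth frame $\{W_\alpha\}$ with $h(W_\alpha,\overline{W_\beta})=\delta_{\alpha\beta}$, i.e. $h_{\alpha\bar{\beta}}=\delta_{\alpha\bar{\beta}}$; conjugating and using the symmetry of $h$ gives $h_{\bar{\alpha}\beta}=\delta_{\alpha\bar{\beta}}$, and setting $W_{\bar{\alpha}}=\overline{W_\alpha}$ and $W_0=T$ completes the frame. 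The one point needing care is the smoothness of the Gram--Schmidt step, but this is automatic: the Hermitian norms appearing in the denominators are everywhere strictly positive by the definiteness just proved, so no degeneracies arise. There is thus no serious analytic obstacle; the real content lies in correctly reading the compatibility identities off \eqref{eq2.1}.
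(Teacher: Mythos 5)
Your proof is correct, but it takes a different route from the paper's. The paper argues entirely inside the real bundle $HM$: it inductively builds a $J$-adapted real orthogonal basis $X_1,\dots,X_{2n}$ with $h(X_a,X_b)=\tfrac12\delta_{ab}$ and $JX_\alpha=X_{\alpha+n}$ (choosing each new $X_k$ orthogonal to the span of the previous vectors and their $J$-images, the orthogonality $h(X_k,JX_k)=d\theta(X_k,X_k)=0$ and the norm equality $h(JX_k,JX_k)=h(X_k,X_k)$ coming from \eqref{eq2.7a}), and only then complexifies by setting $W_\alpha:=X_\alpha-iX_{\alpha+n}$ and verifies $h_{\alpha\beta}=0$, $h_{\alpha\bar\beta}=\delta_{\alpha\bar\beta}$ by direct computation with the $\mathbb{C}$-bilinear extension. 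You instead work from the start in the complex bundle $T^{(1,0)}M$: you first prove the two structural facts that $h$ is identically zero on $T^{(1,0)}M\times T^{(1,0)}M$ (so $h_{\alpha\beta}=0$ for \emph{every} frame, not just the constructed one) and that $(X,Y)\mapsto h(X,\overline{Y})$ is a positive definite Hermitian form, and then obtain the frame by complex Gram--Schmidt applied to an arbitrary smooth local frame. Your computations check out: the skew-adjointness $h(JX,Y)=-h(X,JY)$, the identity $h(JX,JY)=h(X,Y)-\theta(X)\theta(Y)$, the isotropy argument $h(X,Y)=h(iX,iY)=-h(X,Y)$, and the formula $h(X,\overline{X})=\tfrac12 h(Z,Z)$ for $X=\tfrac12(Z-iJZ)$ are all consistent with the paper's conventions for the complex bilinear extension. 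What your approach buys is the a priori isotropy statement and a cleaner invariant formulation; what the paper's buys is an explicit underlying real orthogonal basis of $HM$ with $h(X_a,X_b)=\tfrac12\delta_{ab}$, which makes the normalization $h_{\alpha\bar\beta}=\delta_{\alpha\bar\beta}$ (rather than $\tfrac12$ or $2$) transparent. Both are complete proofs.
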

\begin{proof}
Note that \eqref{eq2.1} leads to
\begin{equation}\label{eq2.7a}
\begin{aligned}
 &JT=0,{\quad}\theta(JX)=0,\\
 &h(X,Y)=h(JX,JY)+\theta(X)\theta(Y),{\quad}d\theta(X,JY)=-d\theta(JX,Y),
 \end{aligned}
 \end{equation}
for any $X,Y\in{TM}$ (cf. p. 351 in \cite{T1}). Choose a vector field $X_1$ in $\Gamma(HM)$ such that $h(X_1,X_1)=\frac{1}{2}$ and let $X_{n+1}:=JX_1$. Then
  $h(X_1,X_{n+1})=h(X_1,JX_1)=d\theta(X_1,X_1)=0$, i.e. $X_{n+1}$ is automatically orthogonal to $X_1$,
  and by third identity in \eqref{eq2.7a}, we get $h(X_{n+1},X_{n+1})=h(JX_1,JX_1)=h(X_1,X_1)=\frac{1}{2}$. We choose $X_2$ orthogonal to $span\{X_1,JX_1\}$, and define $X_{n+2}:=JX_2$. Repeating the procedure, we find a local orthogonal basis $X_1,\cdots,X_{2n}$ with $h(X_a,X_b)=\frac{1}{2}\delta_{ab}$ and $JX_{\alpha}=X_{\alpha+n}$.
  Now define
  \begin{equation}\label{eq2.2a}
  W_{\alpha}:=X_{\alpha}-iX_{\alpha+n},{\qquad}W_{\bar{\alpha}}:=\overline{W_{\alpha}}.
  \end{equation}
  It is direct to see that $JW_{\alpha}=iW_{\alpha}$ and $JW_{\bar{\alpha}}=-iW_{\bar{\alpha}}$. Namely, $W_{\alpha}\in{T^{(1,0)}M}$ and $W_{\bar{\alpha}}\in{T^{(0,1)}M}$. Then by Remark \ref{rem2.0} for the complex extension we get $h(W_{\alpha},W_{\beta})=h(X_{\alpha}-iX_{\alpha+n},X_{\beta}-iX_{\beta+n})=0$, $h_{\bar{\alpha}\bar{\beta}}=0$
  and $\notag h(W_{\alpha},W_{\bar{\beta}})=\delta_{\alpha\bar{\beta}}$, $h_{\bar{\alpha}\beta}=\delta_{\bar{\alpha}\beta}=\delta_{\alpha\bar{\beta}}$.
\end{proof}
  \begin{rem}\label{rem2.0}
  (1) For the multi-index, we adopt the following index conventions in this paper.
  \begin{equation*}
  \begin{aligned}
  &\alpha,\beta,\gamma,\rho,\lambda,\mu,\cdots\in\{1,\cdots,n\},{\qquad}a,b,c,d,e,\cdots\in\{1,2,\cdots,2n\},\\
  &j,k,l,r,s,\cdots\in\{0,1,\cdots,2n\},{\qquad}\bar{\alpha}=\alpha+n.
  \end{aligned}
  \end{equation*}

  (2) In this paper, the Einstein summation convention will be used. Moreover, if indices $\alpha$ and $\bar{{\alpha}}$ both appear in low (or upper) indices, then the index $\alpha$ will be taken summation, e.g. $h_{\alpha\bar{\alpha}}=\sum\limits_{\alpha}h_{\alpha\bar{\alpha}}.$
  \end{rem}
From now on, we choose a local orthonormal $T^{(1,0)}M$-frame $\{W_j\}$. In particular, by \eqref{eq2.1}, $h(T,W_a)=h(W_a,T)=\theta(W_a)=0$ and $h(T,T)=\theta(T)=1$. We denote $h_{ab}=h(W_a,W_b)$ and use $h_{ab}$ and its inverse matrix to lower and raise indices.  Let $\{\theta^{\beta},\theta^{\bar{\beta}},\theta\}$ denote the dual coframe to $\{W_{\alpha},W_{\bar{\alpha}},T\}$, i.e.,
  $\theta^{\beta}(W_{\alpha})=\delta_{\alpha}^{\beta}$, $\theta^{\beta}(W_{\bar{\alpha}})=\theta^{\beta}(T)=0$, $\theta^{\bar{\beta}}:=\overline{\theta^{\beta}}$, and
  $\theta(W_{\alpha})=\theta(W_{\bar{\alpha}})=0$, $\theta(T)=1$. Set $\theta^0:=\theta$. The connection 1-form with respect to $\{W_j\}$ is given by $\nabla{W_j}=\omega_{j}^{k}\otimes{W_k}$, and set $\omega_{j}^{k}:=\Gamma_{ij}^k\theta^i$, i.e. $\nabla_{W_i}W_j=\Gamma_{ij}^kW_k$. By \eqref{TWT}, we get $\theta(\nabla{X})=h(\nabla{X},T)=d(h(X,T))-h(X,\nabla{T})=0$ for any $X\in{TM}$, namely $\Gamma_{ij}^0=0$. And $\nabla{T}=0$ implies $\Gamma_{i0}^k=0$.

By the dual argument, we have
$$\nabla_{W_i}\theta^k=-\Gamma_{ij}^k\theta^j.$$
And for any $(r,s)$-tensor $\varphi$ with components $\varphi_{\phantom{k_1{\cdots}k_r}j_1{\cdots}j_s}^{k_1{\cdots}k_r}=\varphi(\theta^{k_1},\cdots,\theta^{k_r},W_{j_1},\cdots,W_{j_s})$, covariant derivatives of $\varphi$ are given by
\begin{align}\label{tc} \varphi_{\phantom{k_1{\cdots}k_r}j_1{\cdots}j_s,i}^{k_1{\cdots}k_r}&=W_i(\varphi_{\phantom{k_1{\cdots}k_r}j_1{\cdots}j_s}^{k_1{\cdots}k_r})
+\sum\limits_{t=1}^r\Gamma_{il}^{k_t}\varphi_{\phantom{k_1{\cdots}l{\cdots}k_r}j_1{\cdots}j_s}^{k_1{\cdots}l{\cdots}k_r}
-\sum\limits_{t=1}^s\Gamma_{ij_t}^{l}\varphi_{\phantom{k_1{\cdots}k_r}j_1{\cdots}l{\cdots}j_s}^{k_1{\cdots}k_r}.
\end{align}
Denote the components of the almost complex structure $J$ by $J_{\ j}^l$. We write $Q(W_j,W_k)=(\nabla_{W_k}J)W_j=Q_{jk}^lW_l$. Equivalently, $Q_{jk}^l:=J_{\ j,k}^l$. Applying \eqref{tc}, we have
\begin{equation}\label{CQ}
\begin{aligned}
&Q_{jk}^l=W_kJ_{\ j}^l+\Gamma_{ks}^lJ_{\ j}^s-\Gamma_{kj}^sJ_{\ s}^l,\\
&Q_{jk,s}^l=W_sQ_{jk}^l-\Gamma_{sj}^rQ_{rk}^l-\Gamma_{sk}^rQ_{jr}^l+\Gamma_{sr}^lQ_{jk}^r.
\end{aligned}
\end{equation}
\begin{pro}\label{van} (cf. (16)-(18) in \cite{BD1}) With respect to a local $T^{(1,0)}$-frame $\{W_j\}$, the components of tensor $Q$ has the following property:
\begin{equation}\label{QBD}
\begin{aligned}
&Q_{\beta\alpha}^{\gamma}=0,{\quad}\Gamma_{\alpha\bar{\beta}}^{\gamma}=0,{\quad}Q_{\bar{\beta}\alpha}^{\gamma}=0,{\quad}
Q_{\bar{\beta}\alpha}^{\bar{\gamma}}=0,{\quad}\Gamma_{\alpha\beta}^{\bar{\gamma}}=-\frac{i}{2}Q_{\beta\alpha}^{\bar{\gamma}},\\
&\Gamma_{\alpha\bar{\beta}}^0=0,{\quad}\Gamma_{\alpha\beta}^0=0,{\quad}\Gamma_{\alpha0}^{\gamma}=0,
{\quad}\Gamma_{\alpha0}^{\bar{\gamma}}=0{\quad}\Gamma_{0\beta}^{\bar{\gamma}}=0,{\quad}\Gamma_{0\beta}^0=0,\\
&Q_{0j}^k=0,{\quad}Q_{i0}^k=0,{\quad}Q_{ij}^0=0.
\end{aligned}
\end{equation}
\end{pro}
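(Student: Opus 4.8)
The plan is to read off every component in \eqref{QBD} directly from the defining formula \eqref{CQ}, exploiting that in an orthonormal $T^{(1,0)}M$-frame the almost complex structure has \emph{constant} components $J_{\ \alpha}^{\beta}=i\delta_\alpha^\beta$, $J_{\ \bar\alpha}^{\bar\beta}=-i\delta_{\bar\alpha}^{\bar\beta}$, with all mixed components $J_{\ \alpha}^{\bar\beta},J_{\ \bar\alpha}^{\beta}$ and all components $J_{\ j}^0$, $J_{\ 0}^k$ vanishing (since $JW_\alpha=iW_\alpha$, $JW_{\bar\alpha}=-iW_{\bar\alpha}$, $JT=0$). Hence $W_kJ_{\ j}^l=0$, and the first line of \eqref{CQ} collapses to $Q_{jk}^l=\Gamma_{ks}^lJ_{\ j}^s-\Gamma_{kj}^sJ_{\ s}^l$, which is the main computational tool.

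First I would clear every component carrying a $0$. Since $\nabla\theta=0$ and $\nabla T=0$ give $\Gamma_{ij}^0=0$ and $\Gamma_{i0}^k=0$ (as noted before the proposition), the identities $\Gamma_{\alpha\bar\beta}^0=\Gamma_{\alpha\beta}^0=\Gamma_{0\beta}^0=0$ and $\Gamma_{\alpha0}^\gamma=\Gamma_{\alpha0}^{\bar\gamma}=0$ are immediate; feeding $\Gamma_{ij}^0=0$ and $J_{\ j}^0=0$ into the reduced \eqref{CQ} gives $Q_{ij}^0=0$, while $Q_{0j}^k=0$ follows from $Q(T,\cdot)=(\nabla_\cdot J)T=0$ (because $JT=0$, $\nabla T=0$). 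Next I would use an algebraic constraint on $\nabla J$: differentiating $J^2=-\mathrm{Id}+\theta\otimes T$ with $\nabla\theta=\nabla T=0$ yields $(\nabla_YJ)J+J(\nabla_YJ)=0$, so $\nabla_YJ$ anticommutes with $J$ and therefore sends the $i$-eigenbundle $T^{(1,0)}M$ into the $(-i)$-eigenbundle $T^{(0,1)}M$ and conversely. Thus $Q(W_\alpha,\cdot)=(\nabla_\cdot J)W_\alpha\in\Gamma(T^{(0,1)}M)$ and $Q(W_{\bar\alpha},\cdot)\in\Gamma(T^{(1,0)}M)$, which gives $Q_{\beta\alpha}^\gamma=0$ and $Q_{\bar\beta\alpha}^{\bar\gamma}=0$ at once.

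Substituting the constant $J$-components into the reduced \eqref{CQ} then produces the bookkeeping relations $Q_{\beta\alpha}^{\bar\gamma}=2i\Gamma_{\alpha\beta}^{\bar\gamma}$ (equivalently the stated $\Gamma_{\alpha\beta}^{\bar\gamma}=-\frac{i}{2}Q_{\beta\alpha}^{\bar\gamma}$), $Q_{\bar\beta\alpha}^\gamma=-2i\Gamma_{\alpha\bar\beta}^\gamma$, and $Q_{\alpha0}^{\bar\gamma}=2i\Gamma_{0\alpha}^{\bar\gamma}$. Consequently the only two assertions left, $Q_{\bar\beta\alpha}^\gamma=0$ and $Q_{i0}^k=0$, are equivalent to the connection identities $\Gamma_{\alpha\bar\beta}^\gamma=0$ and $\Gamma_{0\beta}^{\bar\gamma}=0$; these are exactly the places where the torsion axioms in \eqref{TWT} must enter, since metric compatibility alone only forces each of them to be antisymmetric in its two $T^{(1,0)}M$-indices.

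The hard part is these last two vanishings. For $\Gamma_{\alpha\bar\beta}^\gamma$ I would observe that, as $\tau(X,Y)=2d\theta(X,Y)T$ is purely vertical for horizontal $X,Y$, the contorsion of $\nabla$ relative to the Levi-Civita connection vanishes on every triple of horizontal vectors; hence $\Gamma_{\alpha\bar\beta}^\gamma$ equals its Levi-Civita value, and the Koszul formula expresses it through a combination of brackets that is exactly $d(d\theta)(W_\alpha,W_{\bar\beta},W_{\bar\gamma})$ up to a nonzero factor, which vanishes because $d\theta$ is closed. For $\Gamma_{0\beta}^{\bar\gamma}$ I would again apply $d(d\theta)=0$, now on the triple $(T,W_\beta,W_\gamma)$, to see that $h([T,W_\beta],W_\gamma)$ is symmetric in $\beta,\gamma$; writing $\nabla_TW_\beta=\tau_\ast(W_\beta)+[T,W_\beta]$ (using $\nabla T=0$) gives $\Gamma_{0\beta}^{\bar\gamma}=\tau_{\ast\beta}^{\bar\gamma}+h([T,W_\beta],W_\gamma)$, a sum of two quantities symmetric in $\beta,\gamma$ once the symmetry of the Webster torsion $\tau_{\ast\beta}^{\bar\gamma}=\tau_{\ast\gamma}^{\bar\beta}$ is known; being also antisymmetric by $\nabla h=0$, it must vanish. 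Establishing the symmetry of the Webster torsion in the non-integrable setting (from the structure equations, cf. \cite{BD1}) and organizing these closedness and torsion computations cleanly is the principal obstacle; once they are in hand, everything else is routine substitution with the constant $J$-components.
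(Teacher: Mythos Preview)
Your proposal is correct and takes a genuinely different route from the paper. The paper treats most of the first two lines of \eqref{QBD} as a citation to \cite{BD1} and handles the third line directly: $Q_{0j}^k=0$ via $(\nabla J)T=0$, $Q_{ij}^0=0$ via $\theta\circ(\nabla J)=0$, and $Q_{i0}^k=0$ by specializing the algebraic identity \eqref{eq2.13} (which expresses $Q$ through the Nijenhuis-type tensor $N^{(1)}$) with $Y=T$, so that $JY=0$ kills the right-hand side. By contrast, you collapse \eqref{CQ} using the constant $J$-components in an orthonormal frame, obtain the eigenbundle-swapping of $Q$ from the anticommutation $(\nabla_YJ)J+J(\nabla_YJ)=0$, and then reduce everything to the two connection vanishings $\Gamma_{\alpha\bar\beta}^{\gamma}=0$ and $\Gamma_{0\beta}^{\bar\gamma}=0$, which you attack with the contorsion formula (so that $\nabla$ agrees with Levi--Civita on horizontal triples) and the closedness $d(d\theta)=0$ applied to suitable frame triples. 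Both of those computations check out: the Koszul expression for the Levi--Civita $\Gamma_{\alpha\bar\beta}^{\gamma}$ with constant metric coefficients is exactly a multiple of $d(d\theta)(W_\alpha,W_{\bar\beta},W_{\bar\gamma})$, and $d(d\theta)(T,W_\beta,W_\gamma)=0$ indeed gives the symmetry of $h([T,W_\beta],W_\gamma)$ in $\beta,\gamma$. Your approach is more self-contained and bypasses the $N^{(1)}$ machinery entirely; the price is that you must import the symmetry $A_{\alpha\beta}=A_{\beta\alpha}$ of the Webster torsion in the non-integrable setting (true, and available in \cite{BD1} or \cite{T1}, but not a one-liner), whereas the paper's use of \eqref{eq2.13} avoids this. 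Either way the logical content is the same; your argument has the advantage of making transparent exactly where the torsion axiom $\tau(T,JZ)=-J\tau(T,Z)$ and the closedness of $d\theta$ each enter.
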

In particular, only the components $Q_{\beta\alpha}^{\bar{\gamma}}$ of tensor $Q$ are non-vanishing. In \eqref{QBD}, $Q_{0j}^k=0$ follows from $Q_{0j}^kW_k=Q(T,W_j)=(\nabla_{W_j}J)T=\nabla_{W_j}(JT)-J\nabla_{W_j}T\equiv0$ by \eqref{TWT} and \eqref{eq2.7a} for any $W_j$. And $Q_{i0}^k=0$ follows from setting $X=W_i$, $Y=T$, $Z=W_l$ in identity (cf. (15) in \cite{BD1}):
\begin{equation}\label{eq2.13}
2h(Q(X,Y),Z)=h(N^{(1)}(X,Z)-\theta(X)N^{(1)}(T,Z)+\theta(Z)N^{(1)}(X,T),JY),
\end{equation}
for any $X,Y,Z\in{TM}$, where
$$N^{(1)}=[J,J]+2(d\theta)\otimes{T},{\quad}[J,J](X,Y)=J^2[X,Y]+[JX,JY]-J[JX,Y]-J[X,JY],$$
to get $Q_{i0}^kh_{kl}\equiv0$ by $JT\equiv0$ in \eqref{eq2.7a}. For $Q_{ij}^0=0$, since we already have $Q_{i0}^0=Q_{0j}^0=0$, it remains to prove $Q_{ab}^0=0$. This follows from $Q_{ab}^0=\theta\bigg(Q(W_a,W_b)\bigg)=\theta\bigg((\nabla_{W_b}J)W_a\bigg)=h(T,(\nabla_{W_b}J)W_a)
=h(T,\nabla_{W_b}{(JW_a)})-h(T,J\nabla_{W_b}W_a)
=\theta\bigg(\nabla_{W_b}{(JW_a)}\bigg)-\theta(J\nabla_{W_b}W_a)=0$ by $W_a$, $W_b$ being horizontal.
\begin{rem}
In pseudohermitian case, $\Gamma_{\alpha\beta}^{\bar{\gamma}}=0$ by the Tanaka-Webster connection preserving $T^{(1,0)}M$. But in general case, $\Gamma_{\alpha\beta}^{\bar{\gamma}}=-\frac{i}{2}Q_{\beta\alpha}^{\bar{\gamma}}$ may not vanish.
\end{rem}
Recall that only the components $Q_{\beta\alpha}^{\bar{\gamma}}$ of tensor $Q$ are non-vanishing. So by definition \eqref{Q123}, with respect to a local orthonormal $T^{(1,0)}$-frame $\{W_j\}$, we have
\begin{equation}\label{GQ}
\begin{aligned}
Q_1(X,X)&=-2Re\bigg(i{\cdot}{\rm trace}\big\{W_j{\longrightarrow}\nabla_{W_j}{Q}(X,X)\big\}\bigg)
=-2Re\bigg(iQ_{\alpha\beta,j}^{j}X^{\alpha}X^{\beta}\bigg)\\
&=-2Re\bigg(iQ_{\alpha\beta,\bar{\gamma}}^{\bar{\gamma}}X^{\alpha}X^{\beta}\bigg)
=i\bigg(Q_{\bar{\alpha}\bar{\beta},\gamma}^{\gamma}X^{\bar{\alpha}}X^{\bar{\beta}}
-Q_{\alpha\beta,\bar{\gamma}}^{\bar{\gamma}}X^{\alpha}X^{\beta}\bigg),\\
Q_2(X,X)&=h_{ij}h^{kl}Q_{\alpha{k}}^{i}Q_{\bar{\beta}l}^{j}X^{\alpha}X^{\bar{\beta}}
=h_{\lambda\bar{\rho}}h^{\gamma\bar{\mu}}Q_{\alpha\gamma}^{\bar{\rho}}Q_{\bar{\beta}\bar{\mu}}^{\lambda}X^{\alpha}X^{\bar{\beta}}
=Q_{\alpha\gamma}^{\bar{\rho}}Q_{\bar{\beta}\bar{\gamma}}^{\rho}X^{\alpha}X^{\bar{\beta}},\\
Q_3(X,X)&={\rm trace}\big\{W_j{\longrightarrow}Q_{W_{\alpha}}{\circ}Q_{W_{\bar{\beta}}}(W_j)X^{\alpha}X^{\bar{\beta}}\big\}\\
&={\rm trace}
\big\{W_j{\longrightarrow}Q_{\bar{\beta}j}^{i}Q_{W_{\alpha}}(W_i)X^{\alpha}X^{\bar{\beta}}\big\}
=Q_{\alpha{i}}^{j}Q_{\bar{\beta}{j}}^{i}X^{\alpha}X^{\bar{\beta}}
=Q_{\alpha\rho}^{\bar{\gamma}}Q_{\bar{\beta}\bar{\gamma}}^{\rho}X^{\alpha}X^{\bar{\beta}},
\end{aligned}
\end{equation}
for any $X=X^{\alpha}W_{\alpha}\in{T^{(1,0)}M}$. According to \eqref{CQ} and $\Gamma_{\alpha\bar{\beta}}^{\gamma}=0$ in \eqref{QBD}, we get the components
\begin{equation}\label{eq2.86} Q_{\alpha\beta,\bar{\rho}}^{\bar{\gamma}}=W_{\bar{\rho}}Q_{\alpha\beta}^{\bar{\gamma}}-\Gamma_{\bar{\rho}\alpha}^eQ_{e\beta}^{\bar{\gamma}}
-\Gamma_{\bar{\rho}\beta}^eQ_{\alpha{e}}^{\bar{\gamma}}+\Gamma_{\bar{\rho}e}^{\bar{\gamma}}Q_{\alpha\gamma}^e
=W_{\bar{\rho}}Q_{\alpha\beta}^{\bar{\gamma}}-\Gamma_{\bar{\rho}\alpha}^{\mu}Q_{\mu\beta}^{\bar{\gamma}}
-\Gamma_{\bar{\rho}\beta}^{\mu}Q_{\alpha\mu}^{\bar{\gamma}}+\Gamma_{\bar{\rho}\bar{\mu}}^{\bar{\gamma}}Q_{\alpha\beta}^{\bar{\mu}},
\end{equation}
of tensor $\nabla{Q}$.
\begin{pro}\label{pro2.1}
With respect to a local orthonormal $T^{(1,0)}M$-frame $\{W_j\}$, we have
\begin{equation}\label{eq2.9}
\begin{aligned}
&\Gamma_{\alpha\beta}^{\gamma}=-\Gamma_{\alpha\bar{\gamma}}^{\bar{\beta}},{\qquad}\Gamma_{\alpha\beta}^{\bar{\gamma}}=-\Gamma_{\alpha\gamma}^{\bar{\beta}},\\
&Q_{\beta\alpha}^{\bar{\gamma}}=-Q_{\gamma\alpha}^{\bar{\beta}},{\qquad}Q_{\alpha\beta,\bar{\rho}}^{\bar{\gamma}}=-Q_{\gamma\beta,\bar{\rho}}^{\bar{\alpha}},\\
&Q_{\alpha\beta}^{\bar{\gamma}}=Q_{\alpha\gamma}^{\bar{\beta}}-Q_{\gamma\alpha}^{\bar{\beta}},
\end{aligned}
\end{equation}
and their conjugation.
\end{pro}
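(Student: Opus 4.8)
The plan is to read off the first four identities from the two structural facts that the frame is orthonormal (so that all $h_{jk}$ are constant) and that $\nabla h=0$, and to obtain the last, genuinely contact Riemannian identity from the Nijenhuis-type formula \eqref{eq2.13}.

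First I would exploit metric compatibility. Writing $\Gamma_{ijk}:=\Gamma_{ij}^{l}h_{lk}$ and differentiating $h_{jk}=h(W_j,W_k)$, the relation $\nabla h=0$ together with the constancy of $h_{jk}$ in an orthonormal $T^{(1,0)}M$-frame gives $0=W_i(h_{jk})=\Gamma_{ijk}+\Gamma_{ikj}$, so $\Gamma_{ijk}$ is antisymmetric in its last two indices. Specializing $(i,j,k)=(\alpha,\beta,\bar{\gamma})$ and using that $h_{l\bar{\gamma}}\ne0$ only for $l=\gamma$ while $h_{l\beta}\ne0$ only for $l=\bar{\beta}$ yields $\Gamma_{\alpha\beta}^{\gamma}=-\Gamma_{\alpha\bar{\gamma}}^{\bar{\beta}}$; taking instead $(i,j,k)=(\alpha,\beta,\gamma)$ yields $\Gamma_{\alpha\beta}^{\bar{\gamma}}=-\Gamma_{\alpha\gamma}^{\bar{\beta}}$. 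These are the first two identities of \eqref{eq2.9}.

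Next, for the two identities involving $Q$ alone I would use that $J$ is skew-adjoint for $h$: from \eqref{eq2.1} and the antisymmetry of $d\theta$ one has $h(JX,Y)=-h(X,JY)$, so the lowered components $J_{jm}:=J_{\ j}^{l}h_{lm}$ satisfy $J_{jm}=-J_{mj}$. Since $\nabla h=0$, the lowered covariant derivative $Q_{jk}^{l}h_{lm}=(\nabla_{W_k}J)_{jm}$ inherits this antisymmetry, giving the \emph{tensorial} identity $Q_{jk}^{l}h_{lm}=-Q_{mk}^{l}h_{lj}$ valid for all indices. Restricting to $(j,k,m)=(\beta,\alpha,\gamma)$ and lowering recovers $Q_{\beta\alpha}^{\bar{\gamma}}=-Q_{\gamma\alpha}^{\bar{\beta}}$ (equivalently, this follows from the identity $\Gamma_{\alpha\beta}^{\bar{\gamma}}=-\frac{i}{2}Q_{\beta\alpha}^{\bar{\gamma}}$ of Proposition \ref{van} combined with the relation $\Gamma_{\alpha\beta}^{\bar{\gamma}}=-\Gamma_{\alpha\gamma}^{\bar{\beta}}$ just proved). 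Because the displayed identity is a genuine tensor equation and the metric is parallel, applying $\nabla_{W_{\bar{\rho}}}$ and restricting indices immediately gives the differentiated version $Q_{\alpha\beta,\bar{\rho}}^{\bar{\gamma}}=-Q_{\gamma\beta,\bar{\rho}}^{\bar{\alpha}}$.

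Finally, the identity $Q_{\alpha\beta}^{\bar{\gamma}}=Q_{\alpha\gamma}^{\bar{\beta}}-Q_{\gamma\alpha}^{\bar{\beta}}$ is the step I expect to be the main obstacle, since it is the one special to the non-integrable setting and cannot come from $\nabla h=0$ alone. Here I would feed $(X,Y,Z)=(W_\alpha,W_\beta,W_\gamma)$ into \eqref{eq2.13}; as $W_\alpha,W_\gamma$ are horizontal the $\theta$-terms drop, and since $d\theta(W_\alpha,W_\gamma)=i\,h(W_\alpha,W_\gamma)=0$ one also has $N^{(1)}(W_\alpha,W_\gamma)=[J,J](W_\alpha,W_\gamma)$. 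Expanding the Nijenhuis bracket with $JW_\alpha=iW_\alpha$, $JW_\gamma=iW_\gamma$ and $J^2=-\mathrm{Id}+\theta\otimes T$, only the $T^{(0,1)}M$-part of $[W_\alpha,W_\gamma]$ survives after pairing against $JW_\beta=iW_\beta$, which produces $Q_{\alpha\beta}^{\bar{\gamma}}=-2i\,([W_\alpha,W_\gamma])^{\bar{\beta}}$. Then I would use $\tau(W_\alpha,W_\gamma)=2d\theta(W_\alpha,W_\gamma)T=0$ from \eqref{TWT}, so that $[W_\alpha,W_\gamma]=\nabla_{W_\alpha}W_\gamma-\nabla_{W_\gamma}W_\alpha$ and hence $([W_\alpha,W_\gamma])^{\bar{\beta}}=\Gamma_{\alpha\gamma}^{\bar{\beta}}-\Gamma_{\gamma\alpha}^{\bar{\beta}}$. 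Substituting $\Gamma_{\alpha\gamma}^{\bar{\beta}}=-\frac{i}{2}Q_{\gamma\alpha}^{\bar{\beta}}$ and $\Gamma_{\gamma\alpha}^{\bar{\beta}}=-\frac{i}{2}Q_{\alpha\gamma}^{\bar{\beta}}$ from \eqref{QBD} collapses the factors of $i$ and yields exactly $Q_{\alpha\beta}^{\bar{\gamma}}=Q_{\alpha\gamma}^{\bar{\beta}}-Q_{\gamma\alpha}^{\bar{\beta}}$; the conjugate identities follow by complex conjugation throughout. The bookkeeping in the Nijenhuis expansion, namely correctly isolating the $T^{(0,1)}M$-component and tracking the powers of $i$, is where the care is needed.
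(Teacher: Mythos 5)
Your proof is correct and follows essentially the same route as the paper: metric compatibility ($\nabla h=0$ with constant $h_{jk}$) for the two $\Gamma$-identities, the relation $\Gamma_{\alpha\beta}^{\bar{\gamma}}=-\frac{i}{2}Q_{\beta\alpha}^{\bar{\gamma}}$ (or, equivalently, your primary phrasing via the skew-adjointness of $\nabla J$ inherited from that of $J$, which also cleanly justifies the differentiated identity) for the two $Q$-antisymmetries, and the identity \eqref{eq2.13} applied to $(W_{\alpha},W_{\beta},W_{\gamma})$ together with the $T^{(0,1)}M$-part of $[W_{\alpha},W_{\gamma}]$ computed from the torsion and $\Gamma_{\alpha\gamma}^{\bar{\beta}}=-\frac{i}{2}Q_{\gamma\alpha}^{\bar{\beta}}$ for the last identity. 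The factor bookkeeping you flag as delicate matches the paper's computation ($h([J,J](W_{\alpha},W_{\gamma}),JW_{\beta})=-4ih([W_{\alpha},W_{\gamma}],W_{\beta})$), so there is no gap.
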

\begin{proof}
By \eqref{QBD} and Proposition \ref{pro2.0}, we have
$$\Gamma_{\alpha\beta}^{\gamma}=\Gamma_{\alpha\beta}^{\rho}{\delta}_{\rho\bar{\gamma}}=h(\nabla_{W_{\alpha}}W_{\beta},W_{\bar{\gamma}})
=W_{\alpha}(h_{\beta{\bar{\gamma}}})-h(W_{\beta},\nabla_{W_{\alpha}}W_{\bar{\gamma}})=-h_{\beta\bar{\mu}}\Gamma_{\alpha\bar{\gamma}}^{\bar{\mu}}=-\Gamma_{\alpha\bar{\gamma}}^{\bar{\beta}}.$$
$\Gamma_{\alpha\beta}^{\bar{\gamma}}=-\Gamma_{\alpha\gamma}^{\bar{\beta}}$ follows similarly. Then we get
$Q_{\beta\alpha}^{\bar{\gamma}}=2i\Gamma_{\alpha\beta}^{\bar{\gamma}}=-2i\Gamma_{\alpha\gamma}^{\bar{\beta}}=-Q_{\gamma\alpha}^{\bar{\beta}}$ by \eqref{QBD}. The fourth identity in \eqref{eq2.9} follows from this identity.

For the last identity in \eqref{eq2.9}, setting $X=W_{\alpha}$, $Y=W_{\beta}$, $Z=W_{\gamma}$ in \eqref{eq2.13}, we get
\begin{align}
\notag 2h(Q(W_{\alpha},W_{\beta}),W_{\gamma})&=h([J,J](W_{\alpha},W_{\gamma}),JW_{\beta})\\
\notag &=ih(J^2[W_{\alpha},W_{\gamma}]+[JW_{\alpha},JW_{\gamma}]-J[JW_{\alpha},W_{\gamma}]-J[W_{\alpha},JW_{\gamma}],W_{\beta})\\
\notag &=-2ih([W_{\alpha},W_{\gamma}],W_{\beta})+2h(J[W_{\alpha},W_{\gamma}],W_{\beta})=-4ih([W_{\alpha},W_{\gamma}],W_{\beta}).
\end{align}
By the definition of the torsion tensor and \eqref{QBD}, the $T^{(0,1)}M$-components of $[W_{\alpha},W_{\gamma}]$ is given by
\begin{align}
\notag [W_{\alpha},W_{\gamma}]&=\nabla_{W_{\alpha}}W_{\gamma}-\nabla_{W_{\gamma}}W_{\alpha}-\tau(W_{\alpha},W_{\gamma})=\Gamma_{\alpha\gamma}^{\bar{\rho}}W_{\bar{\rho}}-\Gamma_{\gamma\alpha}^{\bar{\rho}}W_{\bar{\rho}}\\
&=-\frac{i}{2}Q_{\gamma\alpha}^{\bar{\rho}}W_{\bar{\rho}}+\frac{i}{2}Q_{\alpha\gamma}^{\bar{\rho}}W_{\bar{\rho}}\quad{mod}{\quad}W_{\rho},{\quad}T.
\end{align}
Therefore, we get
\begin{align}
\notag 2Q_{\alpha\beta}^{\bar{\mu}}h_{\gamma\bar{\mu}}=2h(Q(W_{\alpha},W_{\beta}),W_{\gamma})&
=-4ih([W_{\alpha},W_{\gamma}],W_{\beta})=-2Q_{\gamma\alpha}^{\bar{\rho}}h_{\beta\bar{\rho}}+2Q_{\alpha\gamma}^{\bar{\rho}}h_{\beta\bar{\rho}}.
\end{align}
The last identity of \eqref{eq2.9} holds.
\end{proof}
\subsection{The Webster torsion, the curvature tensor and the structure equations}
\begin{lem}\label{lem2.1}(cf. Lemma 1 in \cite{BD1})
The Webster torsion has following properties:
\begin{equation}
\notag \tau_{\ast}(T)=0;{\quad}\tau_{\ast}T_{(1,0)}M{\subseteq}T_{(0,1)}M;{\quad}\tau_{\ast}T_{(0,1)}M{\subseteq}T_{(1,0)}M.
\end{equation}
\end{lem}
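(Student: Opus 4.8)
The plan is to derive all three statements directly from the defining properties of the TWT connection collected in \eqref{TWT}, namely the antisymmetry of the torsion $\tau$ and the compatibility relation $\tau(T,JZ)=-J\tau(T,Z)$, combined with the eigenvalue characterization of $T^{(1,0)}M$ and $T^{(0,1)}M$ under $J$. The first property $\tau_{\ast}(T)=\tau(T,T)=0$ is immediate from the antisymmetry of the torsion tensor, so no real work is needed there.

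For the inclusion $\tau_{\ast}T_{(1,0)}M\subseteq T_{(0,1)}M$, I would take $X\in T^{(1,0)}M$, so that $JX=iX$ after the $\mathbb{C}$-linear extension. Applying $\tau(T,JZ)=-J\tau(T,Z)$ with $Z=X$ yields $\tau_{\ast}(iX)=-J\tau_{\ast}(X)$, that is $i\tau_{\ast}(X)=-J\tau_{\ast}(X)$, and hence $J\tau_{\ast}(X)=-i\tau_{\ast}(X)$. This exhibits $\tau_{\ast}(X)$ as a candidate eigenvector of $J$ for the eigenvalue $-i$. The third inclusion $\tau_{\ast}T_{(0,1)}M\subseteq T_{(1,0)}M$ then follows by conjugation: since $\tau$ is the torsion of a real connection we have $\overline{\tau_{\ast}(X)}=\tau_{\ast}(\overline{X})$, and conjugating the second inclusion interchanges $T^{(1,0)}M$ and $T^{(0,1)}M$; alternatively one simply repeats the computation starting from $JX=-iX$, obtaining $J\tau_{\ast}(X)=i\tau_{\ast}(X)$.

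The one point that requires care — and which I expect to be the only genuine obstacle — is verifying that $\tau_{\ast}(X)$ is horizontal, i.e. has no Reeb component, before concluding that it lies in $T^{(0,1)}M$ rather than merely in the $(-i)$-eigenspace of the extended $J$ on all of $\mathbb{C}TM$. I would settle this using \eqref{eq2.7a}: because $\theta(JY)=0$ for every $Y$, the vector $J\tau_{\ast}(X)$ is automatically horizontal, and the identity $J\tau_{\ast}(X)=-i\tau_{\ast}(X)$ with $-i\ne0$ then forces $\tau_{\ast}(X)$ itself to be horizontal. A horizontal vector $Y$ satisfying $JY=-iY$ lies in $T^{(0,1)}M$ by definition, which completes the argument for the second inclusion and, after conjugation, for the third.
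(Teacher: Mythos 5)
Your proof is correct. The paper itself gives no argument for this lemma --- it simply cites Lemma 1 of \cite{BD1} --- and your derivation is the standard one that reference uses: $\tau_*(T)=\tau(T,T)=0$ by antisymmetry, the eigenvalue computation $J\tau_*(X)=-i\tau_*(X)$ from the axiom $\tau(T,JZ)=-J\tau(T,Z)$, and conjugation for the third inclusion. You correctly identified and closed the only real gap, namely horizontality of $\tau_*(X)$ (needed because the $(-i)$-eigenspace characterization of $T^{(0,1)}M$ only applies inside $\mathbb{C}HM$); your route via $\theta\circ J=0$ works, and an equivalent alternative is to note $\theta(\tau(T,X))=2d\theta(T,X)=0$ directly from $\nabla\theta=0$ and $T\lrcorner d\theta=0$.
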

By Lemma \ref{lem2.1}, we can write $\tau_{\ast}(W_{\alpha})=A_{\alpha}^{\bar{\beta}}W_{\bar{\beta}}$. Set $\tau^{\alpha}:=A_{\bar{\beta}}^{\alpha}\theta^{\bar{\beta}}$. So by \eqref{tor}, with respect to $\{W_j\}$, we have
\begin{equation}\label{tor1}
Tor(X,X)=2Re\bigg(iA_{\alpha\beta}X^{\alpha}X^{\beta}\bigg)
=iA_{\alpha\beta}X^{\alpha}X^{\beta}-iA_{\bar{\alpha}\bar{\beta}}X^{\bar{\alpha}}X^{\bar{\beta}},
\end{equation}
for any $X=X^{\alpha}W_{\alpha}\in{T^{(1,0)}M}$.

The components $R_{j\ kl}^{\ s}$ of the curvature tensor $R(X,Y)=\nabla_X\nabla_Y-\nabla_Y\nabla_X-\nabla_{[X,Y]}$ is given by $R(W_k,W_l)W_j=R_{j\ kl}^{\ s}W_s$. The Ricci tensor is given by
$$Ric(Y,Z)={\rm trace}\{X\longrightarrow{R(X,Z)Y}\},$$
for any $X,Y,Z\in{TM}$ (cf. p. 299 in \cite{BD1}). And the scalar curvature is $R=trace(Ric)$. With respect to a $T^{(1,0)}M$-frame, $R_{\alpha\bar{\beta}}=R_{\alpha\ \gamma\bar{\beta}}^{\ \gamma}$ (cf. (53) in \cite{BD1}). The scalar curvature is $R=h^{\alpha\bar{\beta}}R_{\alpha\bar{\beta}}$.

\begin{pro} (cf. (13), (14) and (39) in \cite{BD1}) With respect to a local orthonormal $T^{(1,0)}M$-frame $\{W_j\}$, we have the following structure equations:
\begin{equation}\label{SE}
\begin{split}
&d\theta=-2ih_{\alpha\bar{\beta}}\theta^{\alpha}\wedge\theta^{\bar{\beta}}=-2i\theta^{\alpha}\wedge\theta^{\bar{\alpha}},\\
&d\theta^{\alpha}=\theta^b\wedge{\omega_b^{\alpha}}+\theta\wedge{\tau^{\alpha}}=\theta^b\wedge\omega_b^{\alpha}+A_{\bar{\beta}}^{\alpha}\theta\wedge\theta^{\bar{\beta}},\\
&d\omega_a^b-\omega_a^c\wedge\omega_c^b=R_{a\ \lambda\bar{\mu}}^{\ b}\theta^{\lambda}\wedge\theta^{\bar{\mu}}+\frac{1}{2}R_{a\ \lambda\mu}^{\ b}\theta^{\lambda}\wedge\theta^{\mu}+\frac{1}{2}R_{a\ \bar{\lambda}\bar{\mu}}^{\ b}\theta^{\bar{\lambda}}\wedge\theta^{{\bar\mu}}+R_{a\ 0\bar{\mu}}^{\ b}\theta\wedge\theta^{\bar{\mu}}-R_{a\ \lambda0}^{\ b}\theta\wedge\theta^{\lambda}\\
&R(X,Y)W_a=2\big(d\omega_a^b-\omega_a^c\wedge\omega_c^b\big)(X,Y)W_b.
\end{split}
\end{equation}
\end{pro}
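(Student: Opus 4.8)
The plan is to recognize all four lines of \eqref{SE} as the Cartan structure equations for the TWT connection, evaluated against the orthonormal $T^{(1,0)}M$-frame $\{W_\alpha,W_{\bar\alpha},T\}$ of Proposition \ref{pro2.0}. Throughout one works with the normalization of $d$ and $\wedge$ for which $R(X,Y)W_a=2\big(d\omega_a^b-\omega_a^c\wedge\omega_c^b\big)(X,Y)W_b$; this is exactly what produces the factors $2$ and $\tfrac12$ below, and the last line of \eqref{SE} pins the convention down once and for all. For the first identity I would simply evaluate $d\theta(X,Y)=h(X,JY)$ from \eqref{eq2.1} on pairs of frame vectors. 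Since $T\lrcorner d\theta=0$ and $\theta(JX)=0$ by \eqref{eq2.7a}, every component involving $T$ vanishes; since $JW_\beta=iW_\beta$ while $h_{\alpha\beta}=h_{\bar\alpha\bar\beta}=0$ by Proposition \ref{pro2.0}, the pure components $d\theta(W_\alpha,W_\beta)$ and $d\theta(W_{\bar\alpha},W_{\bar\beta})$ vanish as well, leaving only $d\theta(W_\alpha,W_{\bar\beta})=-i\,h_{\alpha\bar\beta}$. Expanding $d\theta$ in the coframe then gives $d\theta=-2i\,h_{\alpha\bar\beta}\theta^\alpha\wedge\theta^{\bar\beta}=-2i\,\theta^\alpha\wedge\theta^{\bar\alpha}$.

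For the second identity I would invoke the first Cartan structure equation $d\theta^\alpha=\theta^b\wedge\omega_b^\alpha+\Theta^\alpha$, where $\Theta^\alpha$ is the $\theta^\alpha$-component of the torsion $2$-form; the summation is over horizontal $b$ only because $\omega_0^\alpha=\Gamma_{i0}^\alpha\theta^i=0$, $\Gamma_{i0}^k=0$ being a consequence of $\nabla T=0$ in \eqref{TWT}. The whole content is then the identification $\Theta^\alpha=\theta\wedge\tau^\alpha$. By Proposition \ref{prop2.0a} the horizontal–horizontal torsion $\tau(W_a,W_b)=2d\theta(W_a,W_b)T$ points along $T$ and is annihilated by $\theta^\alpha$; by Lemma \ref{lem2.1} the component $\tau(T,W_\beta)=\tau_\ast(W_\beta)\in T^{(0,1)}M$ is likewise annihilated by $\theta^\alpha$; and the only surviving contribution is $\tau(T,W_{\bar\beta})=\tau_\ast(W_{\bar\beta})=A_{\bar\beta}^\alpha W_\alpha\in T^{(1,0)}M$, which yields $\Theta^\alpha(T,W_{\bar\beta})=A_{\bar\beta}^\alpha$. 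Hence $\Theta^\alpha=A_{\bar\beta}^\alpha\,\theta\wedge\theta^{\bar\beta}=\theta\wedge\tau^\alpha$ with $\tau^\alpha=A_{\bar\beta}^\alpha\theta^{\bar\beta}$, which is the claim.

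For the last two identities I would first establish $R(X,Y)W_a=2\Omega_a^b(X,Y)W_b$ with $\Omega_a^b:=d\omega_a^b-\omega_a^c\wedge\omega_c^b$: expanding $R(X,Y)W_a=\nabla_X\nabla_Y W_a-\nabla_Y\nabla_X W_a-\nabla_{[X,Y]}W_a$ using $\nabla W_a=\omega_a^c\otimes W_c$, the terms differentiating $\omega_a^b$ assemble into $d\omega_a^b$ and the quadratic terms into $-\omega_a^c\wedge\omega_c^b$, the overall factor $2$ being the chosen normalization. This is the fourth line of \eqref{SE}, and evaluating it on frame pairs gives $\Omega_a^b(W_k,W_l)=\tfrac12 R_{a\ kl}^{\ b}$. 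Expanding the $2$-form $\Omega_a^b$ in wedges of the coframe then produces the third line: grouping by type, the mixed block $\theta^\lambda\wedge\theta^{\bar\mu}$ (distinct index types, no double counting) carries coefficient $R_{a\ \lambda\bar\mu}^{\ b}$, the like-type blocks $\theta^\lambda\wedge\theta^\mu$ and $\theta^{\bar\lambda}\wedge\theta^{\bar\mu}$ carry the coefficient $\tfrac12$ from the symmetric summation over equal index types, and the two $T$-blocks give $R_{a\ 0\bar\mu}^{\ b}\theta\wedge\theta^{\bar\mu}$ and, via $R_{a\ 0\lambda}^{\ b}=-R_{a\ \lambda0}^{\ b}$, the term $-R_{a\ \lambda0}^{\ b}\theta\wedge\theta^\lambda$.

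All of this is routine Cartan calculus; the step that most needs care is the identification $\Theta^\alpha=\theta\wedge\tau^\alpha$, where one must check that every torsion component other than the $T$–$T^{(0,1)}M$ one either points along $T$ or lands in $T^{(0,1)}M$ and is thus killed by $\theta^\alpha$ — this is precisely where Proposition \ref{prop2.0a} and Lemma \ref{lem2.1} are used. Beyond that, the only genuine pitfall is the consistent bookkeeping of the factors of $2$ and $\tfrac12$ arising from the $d$/$\wedge$ normalization, which is fixed by the fourth identity and must then be propagated uniformly through the other three.
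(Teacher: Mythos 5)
Your proposal is correct, but there is nothing in the paper to compare it against: the paper does not prove this proposition at all, quoting it from (13), (14) and (39) of \cite{BD1} and only remarking that the second equality in the second line follows from the orthonormality of $\{W_a\}$ (so that $\tau^{\alpha}=A^{\alpha}_{\bar{\beta}}\theta^{\bar{\beta}}$). Your self-contained Cartan-calculus derivation fills this in accurately under the paper's conventions \eqref{eq2.15a}: the evaluation of $d\theta(X,Y)=h(X,JY)$ on frame pairs, the reduction of the torsion $2$-form to $\theta\wedge\tau^{\alpha}$ via Proposition \ref{prop2.0a} and Lemma \ref{lem2.1}, the identity $R(X,Y)W_a=2\bigl(d\omega_a^b-\omega_a^c\wedge\omega_c^b\bigr)(X,Y)W_b$, and the type decomposition producing the factors $\tfrac12$ on the like-type blocks are all sound, and they are consistent with the formula \eqref{eq2.8a} the paper later derives from this proposition. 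One minor bookkeeping slip: with the normalization $\phi\wedge\psi(X,Y)=\tfrac12(\phi(X)\psi(Y)-\psi(X)\phi(Y))$ and $2(d\phi)(X,Y)=\cdots+\phi(\tau(X,Y))$, the torsion $2$-form satisfies $\Theta^{\alpha}(X,Y)=\tfrac12\theta^{\alpha}(\tau(X,Y))$, so $\Theta^{\alpha}(T,W_{\bar{\beta}})=\tfrac12 A^{\alpha}_{\bar{\beta}}$ rather than $A^{\alpha}_{\bar{\beta}}$ as you wrote; this does not affect your conclusion $\Theta^{\alpha}=A^{\alpha}_{\bar{\beta}}\,\theta\wedge\theta^{\bar{\beta}}$, which is exactly the statement compatible with that extra $\tfrac12$, but it is precisely the kind of normalization issue you yourself flag as the main pitfall.
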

Here following \cite{BD1} we use the following definition for exterior product and exterior derivatives
\begin{equation}\label{eq2.15a}
\begin{aligned}
&\phi\wedge\psi(X,Y)=\frac{1}{2}\bigg(\phi(X)\psi(Y)-\psi(X)\phi(Y)\bigg),\\
&2(d\phi)(X,Y)=X(\phi(Y))-Y(\phi(X))-\phi([X,Y])=(\nabla_X\phi)Y-(\nabla_Y\phi)X+\phi(\tau(X,Y)),
\end{aligned}
\end{equation}
for any $1$-form $\phi$ and $\psi$. The second identity in \eqref{SE} follows from the orthonormality of $\{W_a\}$.
\begin{cor}
With respect to a local orthonormal $T^{(1,0)}M$-frame $\{W_j\}$, set $J_{ab}=h_{ac}J_{\ b}^c$. We have
\begin{equation}\label{eq2.8a}
R_{a\ cd}^{\ b}=W_c\Gamma_{da}^b-W_d\Gamma_{ca}^b-\Gamma_{cd}^e\Gamma_{ea}^b+\Gamma_{dc}^e\Gamma_{ea}^b-\Gamma_{ca}^e\Gamma_{de}^b+\Gamma_{da}^e\Gamma_{ce}^b+2\Gamma_{0a}^bJ_{cd}.
\end{equation}
\end{cor}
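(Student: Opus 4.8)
The plan is to compute the curvature tensor directly from its definition $R(W_c,W_d)W_a = R_{a\ cd}^{\ b}W_b$ together with the relation $\nabla_{W_i}W_j = \Gamma_{ij}^k W_k$, rather than routing through the structure equations; the latter path is equally available via the last line of \eqref{SE}, evaluating $2(d\omega_a^b-\omega_a^e\wedge\omega_e^b)$ on $(W_c,W_d)$ with the conventions in \eqref{eq2.15a}, but the direct expansion is more transparent.

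First I would expand the two second-order derivatives. Since $\nabla_{W_d}W_a = \Gamma_{da}^e W_e$, applying $\nabla_{W_c}$ and using the Leibniz rule gives
$$\nabla_{W_c}\nabla_{W_d}W_a = (W_c\Gamma_{da}^b)W_b + \Gamma_{da}^e\Gamma_{ce}^b W_b,$$
and symmetrically for $\nabla_{W_d}\nabla_{W_c}W_a$ with $c$ and $d$ interchanged. Together these produce $W_c\Gamma_{da}^b - W_d\Gamma_{ca}^b$ along with $\Gamma_{da}^e\Gamma_{ce}^b - \Gamma_{ca}^e\Gamma_{de}^b$, already accounting for four of the seven terms on the right of \eqref{eq2.8a}.

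The decisive step is the bracket term $\nabla_{[W_c,W_d]}W_a$. Here I would invoke the definition of the torsion, $[W_c,W_d] = \nabla_{W_c}W_d - \nabla_{W_d}W_c - \tau(W_c,W_d)$, so that $[W_c,W_d] = (\Gamma_{cd}^e - \Gamma_{dc}^e)W_e - \tau(W_c,W_d)$. Since $W_c,W_d$ are horizontal, the torsion identity in \eqref{TWT} gives $\tau(W_c,W_d) = 2d\theta(W_c,W_d)T$, and the third line of \eqref{eq2.1} with the definition $J_{cd}=h_{ce}J_{\ d}^e$ yields $d\theta(W_c,W_d) = h(W_c,JW_d) = J_{cd}$, hence $\tau(W_c,W_d) = 2J_{cd}T = 2J_{cd}W_0$. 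Feeding this into the bracket term and using $\nabla_T W_a = \Gamma_{0a}^b W_b$ produces $(\Gamma_{cd}^e - \Gamma_{dc}^e)\Gamma_{ea}^b - 2J_{cd}\Gamma_{0a}^b$. Subtracting this contribution supplies the remaining terms $-\Gamma_{cd}^e\Gamma_{ea}^b + \Gamma_{dc}^e\Gamma_{ea}^b + 2\Gamma_{0a}^b J_{cd}$, and collecting the coefficient of $W_b$ gives exactly \eqref{eq2.8a}.

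The only genuine obstacle, and the sole departure from the familiar torsion-free Levi-Civita computation, is the nonvanishing torsion of the TWT connection: one must not discard the $\tau(W_c,W_d)$ piece of the bracket and must correctly evaluate it as $2J_{cd}T$ rather than zero. This term is precisely the origin of the last summand $2\Gamma_{0a}^b J_{cd}$, which has no counterpart in the torsion-free setting. Everything else is a routine Leibniz expansion, so once the torsion contribution is correctly identified the identity follows by bookkeeping.
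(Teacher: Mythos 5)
Your proof is correct, and it takes a different (though closely parallel) route from the paper's. The paper derives \eqref{eq2.8a} by evaluating the last structure equation in \eqref{SE}, i.e.\ $R_{a\ cd}^{\ b}=2\big(d\omega_a^b-\omega_a^e\wedge\omega_e^b\big)(W_c,W_d)$, and expanding $2(d\omega_a^b)(W_c,W_d)$ via the covariant-exterior-derivative formula in \eqref{eq2.15a}, $(\nabla_{W_c}\omega_a^b)(W_d)-(\nabla_{W_d}\omega_a^b)(W_c)+\omega_a^b(\tau(W_c,W_d))$; the torsion enters there through $\omega_a^b(2h(W_c,JW_d)T)=2\Gamma_{0a}^bJ_{cd}$. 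You instead expand the operator definition $R(W_c,W_d)W_a=\nabla_{W_c}\nabla_{W_d}W_a-\nabla_{W_d}\nabla_{W_c}W_a-\nabla_{[W_c,W_d]}W_a$ directly in Christoffel symbols, and the same torsion term surfaces inside $[W_c,W_d]=(\Gamma_{cd}^e-\Gamma_{dc}^e)W_e-2J_{cd}T$. Both arguments pivot on exactly the same two facts: $\tau(W_c,W_d)=2d\theta(W_c,W_d)T=2J_{cd}T$ for horizontal $W_c,W_d$ (from \eqref{TWT} and \eqref{eq2.1}), and $\nabla_TW_a=\Gamma_{0a}^bW_b$. Your version is slightly more self-contained, since it needs neither the sign/factor conventions of \eqref{eq2.15a} nor the last line of \eqref{SE} (which is itself just a repackaging of the definition of $R$); the paper's version has the advantage of staying entirely within the moving-frame formalism used elsewhere. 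One small point worth making explicit in your write-up: when you write $[W_c,W_d]=(\Gamma_{cd}^e-\Gamma_{dc}^e)W_e-\tau(W_c,W_d)$ you are implicitly using $\Gamma_{cd}^0=0$, which the paper establishes from $\nabla\theta=0$; since that is already proved before the corollary, this is not a gap, but it deserves a citation.
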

\begin{proof}
Note that $h(W_a,JW_b)=h(W_a,J_{\ b}^cW_c)=h_{ac}J_{\ b}^c=J_{ab}$. By \eqref{TWT} and the last identity in \eqref{SE}, we have
\begin{align}
\notag R_{a\ cd}^{\ b}&=2(d\omega_a^b)(W_c,W_d)-2\omega_a^e\wedge\omega_e^b(W_c,W_d)\\
\notag &=(\nabla_{W_c}\omega_a^b)(W_d)-(\nabla_{W_d}\omega_a^b)(W_c)+\omega_a^b(\tau(W_c,W_d))-\Gamma_{ca}^e\Gamma_{de}^b+\Gamma_{da}^e\Gamma_{ce}^b\\
\notag &=W_c\Gamma_{da}^b-W_d\Gamma_{ca}^b-\omega_a^b(\nabla_{W_c}W_d)+\omega_a^b(\nabla_{W_d}W_c)+\omega_a^b(2h(W_c,JW_d)T)-\Gamma_{ca}^e\Gamma_{de}^b+\Gamma_{da}^e\Gamma_{ce}^b\\
\notag &=W_c\Gamma_{da}^b-W_d\Gamma_{ca}^b-\Gamma_{cd}^e\Gamma_{ea}^b+\Gamma_{dc}^e\Gamma_{ea}^b-\Gamma_{ca}^e\Gamma_{de}^b+\Gamma_{da}^e\Gamma_{ce}^b+2\Gamma_{0a}^bJ_{cd}.
\end{align}
\end{proof}
\begin{pro}\label{pro2.5}
For the components of the curvature tensor, we have the following commutation relations:
\begin{equation}\label{COMR}
\begin{aligned}
R_{\alpha\bar{\beta}\gamma\bar{\mu}}=-R_{\alpha\bar{\beta}\bar{\mu}\gamma},{\quad}
R_{\alpha\bar{\beta}\gamma\bar{\mu}}=-R_{\bar{\beta}\alpha\gamma\bar{\mu}},{\quad}
R_{\alpha\bar{\beta}\gamma\bar{\mu}}=R_{\gamma\bar{\beta}\alpha\bar{\mu}},
\end{aligned}
\end{equation}
and their conjugation with respect to a local orthonormal $T^{(1,0)}M$-frame $\{W_j\}$.
\end{pro}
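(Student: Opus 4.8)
I would treat the three identities in \eqref{COMR} separately, the first two being formal and the third carrying all the content. The relation $R_{\alpha\bar\beta\gamma\bar\mu} = -R_{\alpha\bar\beta\bar\mu\gamma}$ is just the skew-symmetry of $R(X,Y)$ in its two directional arguments, which is built into the definition $R(X,Y) = \nabla_X\nabla_Y - \nabla_Y\nabla_X - \nabla_{[X,Y]}$ and is already visible in the curvature $2$-form of \eqref{SE}. The relation $R_{\alpha\bar\beta\gamma\bar\mu} = -R_{\bar\beta\alpha\gamma\bar\mu}$ I would obtain from metric compatibility: since $\nabla h = 0$ by \eqref{TWT}, each endomorphism $R(W_\gamma, W_{\bar\mu})$ is $h$-skew, so $h(R(W_\gamma,W_{\bar\mu})W_\alpha,W_{\bar\beta}) = -h(W_\alpha, R(W_\gamma,W_{\bar\mu})W_{\bar\beta})$, and the symmetry of the bilinear form $h$ rewrites the right-hand side as $-R_{\bar\beta\alpha\gamma\bar\mu}$. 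Neither argument sees the Tanno tensor, so both hold exactly as in the pseudohermitian case.

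For the third relation $R_{\alpha\bar\beta\gamma\bar\mu} = R_{\gamma\bar\beta\alpha\bar\mu}$, which interchanges the two holomorphic indices, the plan is to invoke the first Bianchi identity for a connection with torsion,
\[
\mathfrak{S}\,R(X,Y)Z = \mathfrak{S}\big[(\nabla_X\tau)(Y,Z) + \tau(\tau(X,Y),Z)\big],
\]
with $\mathfrak{S}$ the cyclic sum over $X,Y,Z$, specialized to $X = W_\alpha$, $Y = W_\gamma$, $Z = W_{\bar\mu}$ and then paired with $W_{\bar\beta}$. The three curvature terms collapse to $R_{\alpha\bar\beta\gamma\bar\mu} - R_{\gamma\bar\beta\alpha\bar\mu}$ plus the ``mixed'' component $h(R(W_\alpha,W_\gamma)W_{\bar\mu},W_{\bar\beta})$ built from the two holomorphic directions, so the identity will follow once the right-hand side and this mixed term are understood.

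The decisive feature is that the TWT torsion is very rigid. By \eqref{TWT} and \eqref{eq2.7a}, $\tau$ of two horizontal vectors is a multiple of the Reeb field $T$, while $\tau(T,\cdot) = \tau_*$ swaps $T^{(1,0)}M$ and $T^{(0,1)}M$ by Lemma \ref{lem2.1}, and $\nabla_{W_i}W_j$ has no $T$-component because $\Gamma_{ij}^0 = 0$. Tracing this through the cyclic sum, every term of $\mathfrak{S}[(\nabla_X\tau)(Y,Z) + \tau(\tau(X,Y),Z)]$ lands either along $T$ or in $T^{(0,1)}M$, hence pairs to zero against $W_{\bar\beta}$ via $h(T,W_{\bar\beta}) = 0$ or $h(W_{\bar\nu},W_{\bar\beta}) = 0$; the whole torsion contribution therefore disappears. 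This reduces the third identity to evaluating the mixed component, which by \eqref{eq2.8a} together with the vanishing list \eqref{QBD} is controlled entirely by the coefficients $\Gamma_{\alpha\beta}^{\bar\gamma} = -\frac{i}{2}Q_{\beta\alpha}^{\bar\gamma}$ and is thus purely quadratic in the Tanno tensor --- precisely the kind of extra term absent when $J$ is integrable. The step I expect to be the main obstacle is showing that this Tanno-quadratic remainder is consistent with the claimed equality; I would handle it by expanding the mixed component and collapsing it with the algebraic symmetries of $Q$ from Proposition \ref{pro2.1}, notably $Q_{\alpha\gamma}^{\bar\rho} - Q_{\gamma\alpha}^{\bar\rho} = Q_{\alpha\rho}^{\bar\gamma}$ and $Q_{\beta\alpha}^{\bar\gamma} = -Q_{\gamma\alpha}^{\bar\beta}$. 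Finally, the conjugate identities follow by applying complex conjugation throughout, using $\overline{W_\alpha} = W_{\bar\alpha}$ and the reality of $\nabla$, $h$ and $\tau$.
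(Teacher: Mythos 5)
Your handling of the first two identities is correct and coincides with the paper's: the first is the antisymmetry of $R(X,Y)$ in its directional arguments, and the second is the $h$-skewness of $R(X,Y)$ coming from $\nabla h=0$ (the paper writes this as a longer expansion, but it is the same argument). For the third identity the situation is different: the paper offers no proof at all, only a citation of Corollary 1 in \cite{BD1}, while you propose a first-Bianchi-identity argument --- and that argument has a genuine gap at exactly the step you yourself flag as the main obstacle. Your reduction up to that point is sound: the torsion contributions to $\mathfrak{S}\,R(X,Y)Z$ do pair to zero against $W_{\bar\beta}$ (the $(\nabla\tau)$-terms are multiples of $T$ because $\Gamma_{ij}^{0}=0$ and $\nabla T=0$; of the $\tau(\tau(\cdot,\cdot),\cdot)$-terms two lie in $T^{(0,1)}M$, and the third, $2d\theta(W_\alpha,W_\gamma)\tau_{*}(W_{\bar\mu})$, lies in $T^{(1,0)}M$ but has coefficient $d\theta(W_\alpha,W_\gamma)=ih_{\alpha\gamma}=0$, so your phrase ``lands in $T^{(0,1)}M$'' is slightly off but the conclusion stands). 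Hence the Bianchi identity yields exactly
\[
R_{\alpha\bar\beta\gamma\bar\mu}-R_{\gamma\bar\beta\alpha\bar\mu}=-h\big(R(W_\alpha,W_\gamma)W_{\bar\mu},W_{\bar\beta}\big).
\]

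The gap is that this mixed component does \emph{not} collapse to zero under the algebraic symmetries of Proposition \ref{pro2.1}, so your plan for the final step cannot work as described. Computing it from \eqref{eq2.8a} (only the terms $-\Gamma_{\alpha\gamma}^{e}\Gamma_{e\bar\mu}^{\beta}+\Gamma_{\gamma\alpha}^{e}\Gamma_{e\bar\mu}^{\beta}$ with $e=\bar\rho$ survive the vanishing list \eqref{QBD}), or equivalently from the $(2,0)$-part of the structure equation using $\omega_{\bar\mu}^{\beta}=\tfrac{i}{2}Q_{\bar\mu\bar\rho}^{\beta}\theta^{\bar\rho}$, one finds
\[
h\big(R(W_\alpha,W_\gamma)W_{\bar\mu},W_{\bar\beta}\big)=R_{\bar\mu\ \alpha\gamma}^{\ \beta}
=\tfrac{1}{4}\textstyle\sum_{\rho}Q_{\alpha\rho}^{\bar\gamma}\,Q_{\bar\mu\bar\rho}^{\beta},
\]
using $Q_{\alpha\gamma}^{\bar\rho}-Q_{\gamma\alpha}^{\bar\rho}=Q_{\alpha\rho}^{\bar\gamma}$ from \eqref{eq2.9}. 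Taking $\beta=\alpha$ and $\mu=\gamma$ and applying $Q_{\alpha\rho}^{\bar\gamma}=-Q_{\gamma\rho}^{\bar\alpha}$, this equals $-\tfrac14\sum_{\rho}|Q_{\gamma\rho}^{\bar\alpha}|^{2}$, which is strictly negative wherever $Q\neq0$; no pointwise algebraic identity for $Q$ can make it vanish. So the Bianchi route does not reduce the third symmetry to something trivially true --- it shows that the symmetry is \emph{equivalent} to the vanishing of the purely $(2,0)$-direction curvature component $R_{\bar\mu\bar\beta\alpha\gamma}$, a genuinely quadratic-in-$Q$ quantity. To close the argument you would need a further (differential, not merely algebraic) identity for this component, or you must fall back on the paper's citation of \cite{BD1}; and the computation above indicates that this is precisely the point on which the whole statement hinges, so it cannot be left as a step one ``expects'' to handle.
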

\begin{proof}
The first identity in \eqref{COMR} follows directly by the definition of the curvature tensor. For the last one in \eqref{COMR}, we refer to Corollary 1 in \cite{BD1}. For the second identity in \eqref{COMR}, note that $\nabla{h}=0$ implies $Xh(Y,Z)=h(\nabla_XY,Z)+h(Y,\nabla_XZ)$ and $h_{ab}$ are constants. Then
\begin{align}
\notag R_{\alpha\bar{\beta}\gamma\bar{\mu}}&=h\big(\nabla_{W_{\gamma}}\nabla_{W_{\bar{\mu}}}W_{\alpha}-\nabla_{W_{\bar{\mu}}}\nabla_{W_{\gamma}}W_{\alpha}-\nabla_{[W_{\gamma},W_{\bar{\mu}}]}W_{\alpha},W_{\bar{\beta}}\big)\\
\notag &=W_{\gamma}\bigg(h\big(\nabla_{W_{\bar{\mu}}}W_{\alpha},W_{\bar{\beta}}\big)\bigg)-h\big(\nabla_{W_{\bar{\mu}}}W_{\alpha},\nabla_{W_{\gamma}}W_{\bar{\beta}}\big)
-W_{\bar{\mu}}\bigg(h\big(\nabla_{W_{\gamma}}W_{\alpha},W_{\bar{\beta}}\big)\bigg)\\
\notag &\ \ \ +h\big(\nabla_{W_{\gamma}}W_{\alpha},\nabla_{W_{\bar{\mu}}}W_{\bar{\beta}}\big)-[W_{\gamma},W_{\bar{\mu}}]\big(h(W_{\alpha},W_{\bar{\beta}})\big)+h(W_{\alpha},\nabla_{[W_{\gamma},W_{\bar{\mu}}]}W_{\bar{\beta}})\\
\notag &=W_{\gamma}W_{\bar{\mu}}\big(h_{\alpha\bar{\beta}}\big)-W_{\gamma}\big(h(W_{\alpha},\nabla_{W_{\bar{\mu}}}W_{\bar{\beta}})\big)
-W_{\bar{\mu}}\big(h(W_{\alpha},\nabla_{W_{\gamma}}W_{\bar{\beta}})\big)\\
\notag &\ \ \ +h(W_{\alpha},\nabla_{W_{\bar{\mu}}}\nabla_{W_{\gamma}}W_{\bar{\beta}})-W_{\bar{\mu}}W_{\gamma}\big(h_{\alpha\bar{\beta}}\big)
+W_{\bar{\mu}}\big(h(W_{\alpha},\nabla_{W_{\gamma}}W_{\bar{\beta}})\big)\\
\notag &\ \ \ +W_{\gamma}\big(h(W_{\alpha},\nabla_{W_{\bar{\mu}}}W_{\bar{\beta}})\big)-h(W_{\alpha},\nabla_{W_{\gamma}}\nabla_{W_{\bar{\mu}}}W_{\bar{\beta}})
-[W_{\gamma},W_{\bar{\mu}}]\big(h_{\alpha\bar{\beta}}\big)+h(W_{\alpha},\nabla_{[W_{\gamma},W_{\bar{\mu}}]}W_{\bar{\beta}})\\
\notag &=h(W_{\alpha},\nabla_{W_{\bar{\mu}}}\nabla_{W_{\gamma}}W_{\bar{\beta}})-h(W_{\alpha},\nabla_{W_{\gamma}}\nabla_{W_{\bar{\mu}}}W_{\bar{\beta}})+h(W_{\alpha},\nabla_{[W_{\gamma},W_{\bar{\mu}}]}W_{\bar{\beta}})
=-R_{\bar{\beta}\alpha\gamma\bar{\mu}}.
\end{align}
\end{proof}
\begin{rem}\label{rem2.2}
By Remark \ref{rem2.0} for the complex extension, it's easy to see that  under the complex conjugation, the Riemannian metric $h$, the almost complex structure $J$, the TWT connection $\nabla$, the torsion tensor $A$, the curvature tensor $R$ and the Tanno tensor $Q$ are preserved, i.e.,
  $$\overline{h(Z_1,Z_2)}=h(\overline{Z_1},\overline{Z_2}),{\quad}J\overline{Z_1}=\overline{JZ_1},{\quad}\overline{\nabla_{Z_1}Z_2}=\nabla_{\overline{Z_1}}\overline{Z_2},$$
  $$\overline{\tau(Z_1,Z_2)}=\tau(\overline{Z_1},\overline{Z_2}),{\quad}\overline{R(Z_1,Z_2)Z_3}=R(\overline{Z_1},\overline{Z_2})\overline{Z_3},{\quad}\overline{Q(Z_1,Z_2)}=Q(\overline{Z_1},\overline{Z_2}),$$
  for any $Z_1,Z_2,Z_3\in{\mathbb{C}HM}$. The complex conjugation can be reflected in the indices of the components of $\omega_a^b$, $h_{ab}$, $J_{\ a}^b$, $A_{ab}$, $R_{abcd}$ and their covariant derivatives, e.g.,
$$\overline{\omega_{\alpha}^{\bar{\beta}}}=\omega_{\bar{\alpha}}^{\beta},{\quad}\overline{J_{\ \alpha}^{\beta}}=J_{\ \bar{\alpha}}^{\bar{\beta}},{\quad}\overline{h_{\alpha\bar{\beta}}}=h_{\bar{\alpha}\beta}.$$
\end{rem}

\section{The second- and third-order covariant derivatives and their commutation formulae}
\subsection{The second- and third-order covariant derivatives}
  The second-order covariant derivative of $u$ is defined as
  \begin{equation}\label{eq3.1}
  \nabla^2u(X,Y):=X(Yu)-(\nabla_XY)u,{\quad}u_{jk}:=\nabla^2u(W_j,W_k),
  \end{equation}
  for any vector fields $X$, $Y$, and the third-order covariant derivative of $u$ is defined as
  \begin{align}\label{eq3.2}
  \notag &\nabla^3u(X,Y,Z)=\big(\nabla_X\nabla^2u\big)(Y,Z)=X(\nabla^2u(Y,Z))-\nabla^2u(\nabla_XY,Z)-\nabla^2u(Y,\nabla_XZ),\\
  &u_{jkl}:=\nabla^3u(W_j,W_k,W_l).
  \end{align}
  for any vector fields $X$, $Y$, $Z$. By \eqref{eq3.1}, for the second-order covariant derivative, we have
  $$u_{jk}=\nabla^2u(W_j,W_k)=W_jW_ku-(\nabla_{W_j}W_k)u=W_j(u_k)-\Gamma_{jk}^lu_l.$$
  In particular, by the vanishing of connection coefficients in \eqref{QBD} we get
  \begin{equation}\label{SD}
  \begin{aligned}
  &u_{\alpha\lambda}=\nabla^2u(W_{\alpha},W_{\lambda})=W_{\alpha}(u_{\lambda})-\Gamma_{\alpha\lambda}^{\beta}u_{\beta}-\Gamma_{\alpha\lambda}^{\bar{\beta}}u_{\bar{\beta}}
  =W_{\alpha}(u_{\lambda})-\Gamma_{\alpha\lambda}^{\beta}u_{\beta}+\frac{i}{2}Q_{\lambda\alpha}^{\bar{\beta}}u_{\bar{\beta}},\\
  &u_{\alpha\bar{\lambda}}=W_{\alpha}(u_{\bar{\lambda}})-\Gamma_{\alpha\bar{\lambda}}^{\bar{\beta}}u_{\bar{\beta}},\\
  &u_{\alpha0}=W_{\alpha}(u_0),{\quad}u_{0\alpha}=T(u_{\alpha})-\Gamma_{0\alpha}^{\beta}u_{\beta}.
  \end{aligned}
  \end{equation}

In the following, the vanishing of connection coefficients in \eqref{QBD}, especially,
$$\Gamma_{\alpha\bar{\beta}}^{\gamma}=\Gamma_{\bar{\alpha}\beta}^{\bar{\gamma}}=0,$$
will be used frequently. By \eqref{eq3.2}, for the third-order covariant derivative, we have
\begin{equation}
\notag u_{abc}=W_a(u_{bc})-\Gamma_{ab}^du_{dc}-\Gamma_{ac}^du_{bd}.
\end{equation}
In particular, by \eqref{QBD}, we have
\begin{equation}\label{TD}
\begin{aligned}
&u_{\alpha\beta\gamma}=W_{\alpha}(u_{\beta\gamma})-\Gamma_{\alpha\beta}^{\mu}u_{\mu\gamma}-\Gamma_{\alpha\beta}^{\bar{\mu}}u_{\bar{\mu}\gamma}
-\Gamma_{\alpha\gamma}^{\mu}u_{\beta\mu}-\Gamma_{\alpha\gamma}^{\bar{\mu}}u_{\beta\bar{\mu}},\\
&u_{\bar{\alpha}\beta\gamma}=W_{\bar{\alpha}}(u_{\beta\gamma})
-\Gamma_{\bar{\alpha}\beta}^{\mu}u_{\mu\gamma}-\Gamma_{\bar{\alpha}\gamma}^{\mu}u_{\beta\mu},\\
&u_{\alpha\bar{\beta}\gamma}=W_{\alpha}(u_{\bar{\beta}\gamma})-\Gamma_{\alpha\bar{\beta}}^{\bar{\mu}}u_{\bar{\mu}\gamma}
-\Gamma_{\alpha\gamma}^{\mu}u_{\bar{\beta}\mu}-\Gamma_{\alpha\gamma}^{\bar{\mu}}u_{\bar{\beta}\bar{\mu}},\\
&u_{\bar{\alpha}\bar{\beta}\gamma}=W_{\bar{\alpha}}(u_{\bar{\beta}\gamma})-\Gamma_{\bar{\alpha}\bar{\beta}}^{\mu}u_{\mu\gamma}
-\Gamma_{\bar{\alpha}\bar{\beta}}^{\bar{\mu}}u_{\bar{\mu}\gamma}-\Gamma_{\bar{\alpha}\gamma}^{\mu}u_{\bar{\beta}\mu}.
\end{aligned}
\end{equation}
\begin{rem}
(1) In \eqref{SD} and \eqref{TD}, we have used $\Gamma_{ab}^0=0$, $\Gamma_{\alpha\bar{\beta}}^{\gamma}=0$ and $\Gamma_{\alpha\beta}^{\bar{\gamma}}=-\frac{i}{2}Q_{\beta\alpha}^{\bar{\gamma}}$ repeatedly.

(2) The complex conjugation can also be reflected in the indices of the components of any-order covariant derivative of a real function $u$, e.g. $\overline{u_{\alpha\beta}}=u_{\bar{\alpha}\bar{\beta}},{\quad}
\overline{u_{\alpha\bar{\beta}\gamma}}=u_{\bar{\alpha}\beta\bar{\gamma}}.$
\end{rem}
\subsection{The sub-Laplacian}
On a contact Riemannian manifold $M$, with respect to a local $T^{(1,0)}M$-frame $\{W_j\}$, we define the sub-Laplacian operator as
$$\Delta_{b}u=u_{\phantom{\alpha}\alpha}^{\alpha}+u_{\phantom{\bar{\alpha}}\bar{\alpha}}^{\bar{\alpha}},$$
for $u\in{C_0^{\infty}(M)}$. Furthermore, if $\{W_j\}$ is an orthonormal $T^{(1,0)}M$-frame, we have
\begin{equation}\label{DSL}
\Delta_{b}u=u_{\alpha\bar{\alpha}}+u_{\bar{\alpha}\alpha}.
\end{equation}
For any functions $u\in{C_0^{\infty}(M)}$ and $v{\in}C^{\infty}(M)$, we define the $L^2$ inner product $(\cdot,\cdot)$ as
\begin{equation}\label{eq3.5a}
(u,v)=\int_Mu\bar{v}dV.
\end{equation}
For any vector field $X$, $X^{\ast}$ is called the {\it formal adjoint} of $X$ if $(Xu,v)=(u,X^{\ast}v)$ for $u,v{\in}C_0^{\infty}M$. And $\Delta_{b}$ is hypoelliptic and by a result of \cite{MS1} has a discrete spectrum
\begin{equation*}
0<\lambda_1<\lambda_2<\cdots<{\ }\uparrow+\infty.
\end{equation*}
\begin{lem}\label{lem3.1}
We have
\begin{equation}\label{FA}
W_{\alpha}^{\ast}=-W_{\bar{\alpha}}+\Gamma_{\bar{\beta}\beta}^{\alpha},{\qquad}
W_{\bar{\alpha}}^{\ast}=-W_{\alpha}+\Gamma_{\beta\bar{\beta}}^{\bar{\alpha}},{\qquad}(iT)^{\ast}=iT.
\end{equation}
\end{lem}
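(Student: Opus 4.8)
The plan is to compute the formal adjoints by testing against the $L^2$ inner product \eqref{eq3.5a} and reducing everything to a divergence that integrates to zero on the compact (or compactly supported) setting. The key observation is that for a first-order operator $W_\alpha$ one has, for $u,v\in C_0^\infty(M)$,
\[
(W_\alpha u,v)=\int_M (W_\alpha u)\,\bar v\,dV,
\]
and I would rewrite $(W_\alpha u)\bar v=W_\alpha(u\bar v)-u\,W_\alpha\bar v$. Since $u$ is a function, $W_\alpha \bar v=\overline{W_{\bar\alpha}v}$, so the second term already produces the expected principal part $-W_{\bar\alpha}$ in the adjoint. The entire problem therefore reduces to identifying the integral of the total-derivative term $\int_M W_\alpha(u\bar v)\,dV$, i.e.\ to computing the divergence of the vector field $f W_\alpha$ for a scalar $f=u\bar v$ with respect to the volume form $dV$.

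\emph{First} I would fix the volume form. With respect to the orthonormal $T^{(1,0)}M$-frame $\{W_\alpha,W_{\bar\alpha},T\}$, the natural volume form is (up to constant) $dV=\theta\wedge(d\theta)^n$, equivalently the product of the dual coframe $\theta^1\wedge\cdots\wedge\theta^{\bar n}\wedge\theta$; since $\nabla h=0$ and $\nabla\theta=0$ the connection is volume-compatible, so the divergence of any field $X=X^jW_j$ computes cleanly as $\mathrm{div}(X)=X^j_{\ ,j}=W_j(X^j)+\Gamma_{kj}^jX^k$ modulo the trace of the torsion. \emph{Next}, applying this to $X=f W_\alpha$ (so $X^\beta=f\delta_\alpha^\beta$) and integrating, the term $\int_M W_\alpha f\,dV$ picks up exactly the trace-of-connection correction $-\int_M f\,\Gamma_{j\alpha}^j\,dV$ together with any contribution from the torsion of $\nabla$. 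Using the vanishing relations in \eqref{QBD} (in particular $\Gamma_{\alpha0}^\gamma=\Gamma_{\alpha0}^{\bar\gamma}=0$ and $\Gamma_{\alpha\beta}^0=\Gamma_{\alpha\bar\beta}^0=0$) the trace $\Gamma_{j\alpha}^j$ collapses to a single surviving block, which I expect to match $\Gamma_{\bar\beta\beta}^\alpha$ after using the symmetry relations of Proposition \ref{pro2.1}; this produces the claimed zeroth-order term in $W_\alpha^\ast$. The identity for $W_{\bar\alpha}^\ast$ then follows by complex conjugation via Remark \ref{rem2.2}, and $(iT)^\ast=iT$ follows because $T$ is real, $\nabla T=0$, and the factor $i$ combines with the complex conjugation in the inner product to make $iT$ self-adjoint.

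\textbf{The main obstacle} I anticipate is the careful bookkeeping of the torsion contribution to the divergence: unlike the Levi-Civita case, the TWT connection $\nabla$ has nonzero torsion $\tau$, so integration by parts is governed not by $\nabla$ alone but by the true divergence with respect to $dV$, and I must verify that the antisymmetric torsion terms either cancel in the trace or integrate to zero. Concretely, the subtlety is that $\int_M W_\alpha(g)\,dV$ equals $-\int_M g\,\mathrm{div}(W_\alpha)\,dV$ only after correctly accounting for $\mathrm{div}(W_\alpha)$, and computing this divergence requires knowing how $\theta\wedge(d\theta)^n$ transforms, which I would extract from the structure equations \eqref{SE} together with the relation $\Gamma_{ij}^0=0$. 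I expect that after substituting the vanishing coefficients from \eqref{QBD} and the symmetry $\Gamma_{\alpha\beta}^\gamma=-\Gamma_{\alpha\bar\gamma}^{\bar\beta}$ from \eqref{eq2.9}, all genuinely torsion-dependent pieces drop out and the surviving zeroth-order coefficient is precisely $\Gamma_{\bar\beta\beta}^\alpha$, completing the proof.
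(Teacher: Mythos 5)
Your proposal is correct and follows essentially the same route as the paper: integrate by parts against $dV=\theta\wedge(d\theta)^n$, identify the zeroth-order correction as the divergence of $W_\alpha$ (the paper computes it as $d(i_{W_\alpha}dV)=\Gamma_{\beta\alpha}^{\beta}\,dV$ via the structure equations \eqref{SE}, which is your connection-trace $\Gamma_{j\alpha}^{j}$ after the $0$- and $\bar\beta$-blocks vanish by \eqref{QBD}), and then convert $-\overline{\Gamma_{\beta\alpha}^{\beta}}$ into $+\Gamma_{\bar\beta\beta}^{\alpha}$ via $\Gamma_{\beta\alpha}^{\beta}=-\Gamma_{\beta\bar\beta}^{\bar\alpha}$ from \eqref{eq2.9}. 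Your anticipated worry about the torsion trace is resolved exactly as you expect: $\tau$ of two horizontal fields is purely in the $T$-direction and $\tau_\ast$ is horizontal, so its trace vanishes and only the connection-coefficient trace survives.
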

\begin{proof}
By \eqref{SE}, we have
\begin{align}
\notag d\theta^n&=(-2i\theta^{\alpha}\wedge\theta^{\bar{\alpha}})^n
=(-2)^ni^nn!\theta^1\wedge\theta^{\bar1}\wedge\cdots\wedge\theta^n\wedge\theta^{\bar{n}}\\
\notag &=(-2)^ni^nn!(-1)^{n(n-1)/2}\theta^1\wedge\theta^2\wedge\cdots\wedge\theta^n\wedge\theta^{\bar1}\wedge\cdots\wedge\theta^{\bar{n}}\\
\notag &=(-2)^ni^{n^2}n!\theta^1\wedge\theta^2\wedge\cdots\wedge\theta^n\wedge\theta^{\bar1}\wedge\cdots\wedge\theta^{\bar{n}}.
\end{align}
So the volume form is
\begin{equation}\label{vf}
dV:=\theta{\wedge}d\theta^n=(-2)^ni^{n^2}n!\theta\wedge\theta^1\wedge\theta^2\wedge\cdots\wedge\theta^n\wedge\theta^{\bar1}\wedge\cdots\wedge\theta^{\bar{n}}.
\end{equation}
For any vector field $X$ and $u\in{C_0^{\infty}(M)}$, we get
\begin{align}\label{eq3.51}
\notag \int_MXuvdV&=\int_Mvdu{\wedge}i_{X}dV=-\int_Mudv{\wedge}i_{X}dV-\int_Muvd(i_{X}dV)+\int_Md(uvi_XdV)\\
&=-\int_MuXvdV-\int_Muvd(i_{X}dV),
\end{align}
by Stokes' formula and $0=\int_Mi_{X}(vdu\wedge{dV})=\int_MvXudV-\int_Mvdu{\wedge}i_{X}dV$.
It follows from the structure equation \eqref{SE} that
\begin{equation}\label{eq3.5b}
d\theta^{\beta}=\theta^{\gamma}\wedge\omega_{\gamma}^{\beta}+\theta^{\bar{\gamma}}\wedge\omega_{\bar{\gamma}}^{\beta}
=\Gamma_{\mu\gamma}^{\beta}\theta^{\gamma}\wedge\theta^{\mu}+\Gamma_{\bar{\mu}\gamma}^{\beta}\theta^{\gamma}\wedge\theta^{\bar{\mu}}+\Gamma_{\mu\bar{\gamma}}^{\beta}\theta^{\bar{\gamma}}\wedge\theta^{\mu}+\Gamma_{\bar{\mu}\bar{\gamma}}^{\beta}\theta^{\bar{\gamma}}\wedge\theta^{\bar{\mu}},{\quad}mod{\quad}\theta.
\end{equation}
Applying \eqref{vf} and \eqref{eq3.5b}, we get
\begin{align}
\notag d(i_{W_{\alpha}}dV)&=(-2)^ni^{n^2}n!d\bigg((-1)^{\alpha}\theta\wedge\theta^1\wedge\cdots\wedge\widehat{\theta^{\alpha}}\wedge\cdots
\wedge\theta^n\wedge\cdots\wedge\theta^{\bar{n}}\bigg)\\
\notag &=(-1)^{\alpha}(-2)^ni^{n^2}n!\bigg(\sum\limits_{\beta<\alpha}(-1)^{\beta}\theta\wedge\theta^1\wedge\cdots
\wedge{d\theta^{\beta}}\wedge\cdots\wedge\widehat{\theta^{\alpha}}\wedge\cdots\wedge\theta^n\wedge\cdots\wedge\theta^{\bar{n}}\\
\notag &\ \ \ +\sum\limits_{\beta>\alpha}(-1)^{\beta-1}\theta\wedge\theta^1\wedge\cdots\wedge\widehat{\theta^{\alpha}}\cdots
\wedge{d\theta^{\beta}}\wedge\cdots\wedge\theta^n\wedge\cdots\wedge\theta^{\bar{n}}\\
\notag &\ \ \ +\sum\limits_{\beta=1}^n(-1)^{n+\beta-1}\theta\wedge\theta^1\wedge\cdots\wedge\widehat{\theta^{\alpha}}\cdots\wedge\theta^n\wedge
\cdots\wedge{d\theta^{\bar{\beta}}}\wedge\cdots\wedge\theta^{\bar{n}}\bigg)\\
\notag &=(-2)^ni^{n^2}n!\bigg(-\Gamma_{\alpha\beta}^{\beta}+\Gamma_{\beta\alpha}^{\beta}-\Gamma_{\alpha\bar{\beta}}^{\bar{\beta}}
+\Gamma_{\bar{\beta}\alpha}^{\bar{\beta}}\bigg)\theta\wedge\theta^1\wedge\cdots\wedge\theta^{\bar{n}}\\
\notag &=\bigg(-\Gamma_{\alpha\beta}^{\beta}+\Gamma_{\beta\alpha}^{\beta}-\Gamma_{\alpha\bar{\beta}}^{\bar{\beta}}
+\Gamma_{\bar{\beta}\alpha}^{\bar{\beta}}\bigg)dV=\Gamma_{\beta\alpha}^{\beta}dV.
\end{align}
The last identity holds because $\Gamma_{\alpha\beta}^{\beta}+\Gamma_{\alpha\bar{\beta}}^{\bar{\beta}}=0$ by \eqref{eq2.9} and $\Gamma_{\bar{\beta}\alpha}^{\bar{\beta}}=0$ by \eqref{QBD}. For any $u{\in}C_0^{\infty}(M)$ and $v{\in}C^{\infty}(M)$, apply \eqref{eq3.51} with $X=W_{\alpha}$ to get
\begin{align}\label{eq3.52}
\notag (W_{\alpha}u,v)&=\int_MW_{\alpha}u\bar{v}dV=-\int_MuW_{\alpha}\bar{v}dV-\int_Mu\bar{v}d(i_{W_{\alpha}}dV)
=\int_Mu\big(-W_{\alpha}\bar{v}-\Gamma_{\beta\alpha}^{\beta}\bar{v}\big)dV\\
&=\int_Mu\overline{\big(-W_{\bar{\alpha}}v+\Gamma_{\bar{\beta}\beta}^{\alpha}v\big)}dV
=\Bigg(u,\bigg(-W_{\bar{\alpha}}+\Gamma_{\bar{\beta}\beta}^{\alpha}\bigg)v\Bigg),
\end{align}
by $\Gamma_{\beta\alpha}^{\beta}=-\Gamma_{\beta\bar{\beta}}^{\bar{\alpha}}$ in \eqref{eq2.9}.  The first identity in \eqref{FA} holds. The second identity in \eqref{FA} follows from taking conjugation.

By the structure equation \eqref{SE}, $d\theta^{\alpha}$ doesn't contain $\theta\wedge\theta^{\alpha}$ terms and $d\theta^{\bar{\alpha}}$ doesn't contain $\theta\wedge\theta^{\bar{\alpha}}$ terms. So
\begin{equation}\label{eq3.53}
d(i_TdV)=(-2)^ni^{n^2}n!d\bigg(\theta^1\wedge\theta^2\wedge\cdots\wedge\theta^n\wedge\theta^{\bar1}\wedge\cdots\wedge\theta^{\bar{n}}\bigg)=0.
\end{equation}
Apply \eqref{eq3.51} with $X=T$ to get
$$(iTu,v)=i\int_MTu\bar{v}dV=-i\int_MuT\bar{v}dV=(u,iTv).$$
$(iT)^{\ast}=iT$ follows.
\end{proof}
\begin{cor}\label{cor3.1}
With respect to an orthonormal${\ \ }$$T^{(1,0)}M$-frame $\{W_j\}$, we have
$$(\Delta_{b}u,v)=-\sum\limits_{\alpha}(u_{\alpha},v_{\alpha})+(u_{\bar{\alpha}},v_{\bar{\alpha}}),$$
for $u{\in}C_0^{\infty}(M)$ and $v{\in}C^{\infty}(M)$
\end{cor}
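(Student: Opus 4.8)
The plan is to begin from the pointwise identity \eqref{DSL}, $\Delta_{b}u=u_{\alpha\bar{\alpha}}+u_{\bar{\alpha}\alpha}$, pair it with $v$ in the $L^2$ product \eqref{eq3.5a}, and then transfer each horizontal frame derivative from $u$ onto $v$ using the formal adjoint formulae \eqref{FA} of Lemma \ref{lem3.1}. The two summands are entirely symmetric, so I would treat $(u_{\alpha\bar{\alpha}},v)$ in detail and obtain the other by complex conjugation.

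First I would rewrite the second-order derivatives through the frame. By the second line of \eqref{SD}, $u_{\alpha\bar{\alpha}}=W_{\alpha}(u_{\bar{\alpha}})-\Gamma_{\alpha\bar{\alpha}}^{\bar{\beta}}u_{\bar{\beta}}$ (summed over $\alpha$), and conjugating gives $u_{\bar{\alpha}\alpha}=W_{\bar{\alpha}}(u_{\alpha})-\Gamma_{\bar{\alpha}\alpha}^{\beta}u_{\beta}$. Since $u\in C_0^{\infty}(M)$, each $u_{\bar{\alpha}}$ is again compactly supported, so the integration-by-parts identity \eqref{eq3.51} applies with one compactly supported factor even though $v$ is only smooth; equivalently, the adjoint relation $(W_{\alpha}f,v)=(f,W_{\alpha}^{\ast}v)$ stays valid for $f\in C_0^{\infty}(M)$ and $v\in C^{\infty}(M)$. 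Applying $W_{\alpha}^{\ast}=-W_{\bar{\alpha}}+\Gamma_{\bar{\beta}\beta}^{\alpha}$ to the term $(W_{\alpha}u_{\bar{\alpha}},v)$ then yields $-(u_{\bar{\alpha}},W_{\bar{\alpha}}v)=-(u_{\bar{\alpha}},v_{\bar{\alpha}})$, which is exactly the desired gradient term, together with a connection correction $(u_{\bar{\alpha}},\Gamma_{\bar{\beta}\beta}^{\alpha}v)$.

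The key computation is then to verify that the two leftover connection-coefficient contributions cancel. Using $\overline{\Gamma_{\bar{\beta}\beta}^{\alpha}}=\Gamma_{\beta\bar{\beta}}^{\bar{\alpha}}$ from Remark \ref{rem2.2}, the term $(u_{\bar{\alpha}},\Gamma_{\bar{\beta}\beta}^{\alpha}v)$ equals $\int_M u_{\bar{\alpha}}\Gamma_{\beta\bar{\beta}}^{\bar{\alpha}}\bar{v}\,dV$, while the Christoffel correction $-\int_M\Gamma_{\alpha\bar{\alpha}}^{\bar{\beta}}u_{\bar{\beta}}\bar{v}\,dV$ coming from \eqref{SD} becomes $-\int_M\Gamma_{\beta\bar{\beta}}^{\bar{\alpha}}u_{\bar{\alpha}}\bar{v}\,dV$ after relabeling the summation indices $\alpha\leftrightarrow\beta$ under the conventions of Remark \ref{rem2.0}. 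These are negatives of one another and sum to zero, giving $(u_{\alpha\bar{\alpha}},v)=-(u_{\bar{\alpha}},v_{\bar{\alpha}})$. The conjugate argument, using $W_{\bar{\alpha}}^{\ast}=-W_{\alpha}+\Gamma_{\beta\bar{\beta}}^{\bar{\alpha}}$, gives $(u_{\bar{\alpha}\alpha},v)=-(u_{\alpha},v_{\alpha})$, and adding the two establishes the corollary.

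The main obstacle is bookkeeping rather than conceptual: one must invoke \eqref{SD} correctly, retaining the $\Gamma_{\alpha\bar{\beta}}^{\bar{\gamma}}$ term that does not vanish in general, keep strict track of which indices are summed, and apply the conjugation rules of Remark \ref{rem2.2} to identify $\overline{W_{\bar{\alpha}}v}$ with $W_{\alpha}\bar{v}$ and to match barred with unbarred Christoffel symbols. The delicate point is precisely the cancellation of the two connection terms after index relabeling, since a single sign or index slip there would leave a spurious first-order remainder and destroy the clean identity.
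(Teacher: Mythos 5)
Your proposal is correct and follows essentially the same route as the paper: expand $u_{\alpha\bar{\alpha}}+u_{\bar{\alpha}\alpha}$ via \eqref{SD}, move $W_{\alpha}$ and $W_{\bar{\alpha}}$ onto $v$ with the adjoint formulae \eqref{FA}, and cancel the residual connection terms after conjugating the Christoffel symbol and relabeling the summed indices. The only cosmetic difference is that the paper carries both summands in parallel rather than deriving one from the other by the conjugate argument.
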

\begin{proof}
By the definition of $\Delta_{b}$, we get
\begin{align}
\notag (\Delta_{b}u,v)&=\sum\limits_{\alpha}(u_{\bar{\alpha}\alpha}+u_{\alpha\bar{\alpha}},v)
=\sum\limits_{\alpha,\beta}\bigg(W_{\bar{\alpha}}u_{\alpha}-\Gamma_{\bar{\alpha}\alpha}^{\beta}u_{\beta},v\bigg)
+\bigg(W_{\alpha}u_{\bar{\alpha}}-\Gamma_{\alpha\bar{\alpha}}^{\bar{\beta}}u_{\bar{\beta}},v\bigg)\\
\notag &=\sum\limits_{\alpha,\beta}\bigg(u_{\alpha},W_{\bar{\alpha}}^{\ast}v\bigg)-\bigg(\Gamma_{\bar{\alpha}\alpha}^{\beta}u_{\beta},v\bigg)
+\bigg(u_{\bar{\alpha}},W_{\alpha}^{\ast}v\bigg)-\bigg(\Gamma_{\alpha\bar{\alpha}}^{\bar{\beta}}u_{\bar{\beta}},v\bigg)\\
\notag &=-\sum\limits_{\alpha,\beta}(u_{\alpha},v_{\alpha})+\bigg(u_{\alpha},\Gamma_{\beta\bar{\beta}}^{\bar{\alpha}}v\bigg)
-\bigg(\Gamma_{\bar{\alpha}\alpha}^{\beta}u_{\beta},v\bigg)-(u_{\bar{\alpha}},v_{\bar{\alpha}})
+\bigg(u_{\bar{\alpha}},\Gamma_{\bar{\beta}\beta}^{\alpha}v\bigg)-\bigg(\Gamma_{\alpha\bar{\alpha}}^{\bar{\beta}}u_{\bar{\beta}},v\bigg)\\
\notag &=-\sum\limits_{\alpha}(u_{\alpha},v_{\alpha})+(u_{\bar{\alpha}},v_{\bar{\alpha}}).
\end{align}
\end{proof}
\begin{cor}\label{cor3.2}
\begin{equation}\label{eq6.1a}
(\Delta_{b}u)_{\alpha}=u_{\alpha\beta\bar{\beta}}+u_{\alpha\bar{\beta}\beta}.
\end{equation}
\end{cor}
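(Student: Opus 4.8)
The plan is to exploit the fact that $\Delta_b u$ is a scalar function, so that its first covariant derivative is simply the frame derivative $(\Delta_b u)_\alpha = W_\alpha(\Delta_b u)$, and then to match this against the expansion of the two third-order derivatives on the right-hand side. First I would start from the definition \eqref{DSL}, $\Delta_b u = u_{\beta\bar{\beta}} + u_{\bar{\beta}\beta}$ (summation over $\beta$), and apply the definition \eqref{eq3.2} of the third-order covariant derivative to each term:
\begin{align*}
u_{\alpha\beta\bar{\beta}} &= W_\alpha(u_{\beta\bar{\beta}}) - \Gamma_{\alpha\beta}^d u_{d\bar{\beta}} - \Gamma_{\alpha\bar{\beta}}^d u_{\beta d},\\
u_{\alpha\bar{\beta}\beta} &= W_\alpha(u_{\bar{\beta}\beta}) - \Gamma_{\alpha\bar{\beta}}^d u_{d\beta} - \Gamma_{\alpha\beta}^d u_{\bar{\beta}d}.
\end{align*}
Adding these, the two leading terms combine into $W_\alpha(u_{\beta\bar{\beta}} + u_{\bar{\beta}\beta}) = W_\alpha(\Delta_b u) = (\Delta_b u)_\alpha$, so the corollary reduces to showing that the remaining connection terms
\[
\Gamma_{\alpha\beta}^d\big(u_{d\bar{\beta}} + u_{\bar{\beta}d}\big) + \Gamma_{\alpha\bar{\beta}}^d\big(u_{\beta d} + u_{d\beta}\big)
\]
sum to zero.

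Next I would use \eqref{QBD} to discard the vanishing coefficients $\Gamma_{\alpha\bar{\beta}}^\gamma = 0$, $\Gamma_{\alpha\beta}^0 = 0$ and $\Gamma_{\alpha\bar{\beta}}^0 = 0$, which restricts the summation index $d$ to the values $d=\mu$ and $d=\bar{\mu}$. The surviving expression splits into three families, which I would treat by relabelling the summed indices and invoking the symmetries of Proposition \ref{pro2.1}. The family carrying $\Gamma_{\alpha\beta}^\mu\big(u_{\mu\bar{\beta}} + u_{\bar{\beta}\mu}\big)$ cancels against the family carrying $\Gamma_{\alpha\bar{\beta}}^{\bar{\mu}}\big(u_{\beta\bar{\mu}} + u_{\bar{\mu}\beta}\big)$: by the first identity of \eqref{eq2.9} one has $\Gamma_{\alpha\bar{\beta}}^{\bar{\mu}} = -\Gamma_{\alpha\mu}^\beta$, so after swapping the dummy indices $\beta\leftrightarrow\mu$ the two contributions are exact negatives. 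The remaining family is $\Gamma_{\alpha\beta}^{\bar{\mu}}\big(u_{\bar{\mu}\bar{\beta}} + u_{\bar{\beta}\bar{\mu}}\big)$; here I would use $\Gamma_{\alpha\beta}^{\bar{\mu}} = -\frac{i}{2}Q_{\beta\alpha}^{\bar{\mu}}$ together with the antisymmetry $Q_{\beta\alpha}^{\bar{\mu}} = -Q_{\mu\alpha}^{\bar{\beta}}$ from \eqref{eq2.9} to conclude that $\Gamma_{\alpha\beta}^{\bar{\mu}}$ is antisymmetric in $(\beta,\mu)$, while the Hessian factor $u_{\bar{\mu}\bar{\beta}} + u_{\bar{\beta}\bar{\mu}}$ is symmetric in $(\beta,\mu)$; the contraction of an antisymmetric array against a symmetric one vanishes.

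Conceptually, this identity is just the statement that covariant differentiation commutes with the horizontal $h$-trace, which holds because $\nabla h = 0$; the frame computation above is the explicit verification of this fact. The subtlety worth emphasizing is that, unlike the pseudohermitian case, $\Gamma_{\alpha\beta}^{\bar{\gamma}}$ need not vanish, so a priori an extra Tanno-type contribution could survive in $(\Delta_b u)_\alpha$. The crux of the argument is therefore the last cancellation, in which this nonintegrability term is killed precisely by the antisymmetry $Q_{\beta\alpha}^{\bar{\mu}} = -Q_{\mu\alpha}^{\bar{\beta}}$; this is the only step that genuinely uses the algebraic structure of the Tanno tensor, and I expect it to be the main, if modest, obstacle in an otherwise bookkeeping-driven proof.
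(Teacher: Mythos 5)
Your proposal is correct and follows essentially the same route as the paper: expand both third-order derivatives via \eqref{TD}, observe that the leading terms assemble into $W_{\alpha}(\Delta_b u)=(\Delta_b u)_{\alpha}$, and kill the connection terms using the first two identities of \eqref{eq2.9} (the paper pairs the terms slightly differently, but the cancellation mechanism — index relabelling plus the antisymmetry $\Gamma_{\alpha\beta}^{\bar{\mu}}=-\Gamma_{\alpha\mu}^{\bar{\beta}}$ against the symmetric Hessian — is identical). Your closing remark correctly identifies the only genuinely non-pseudohermitian step.
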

\begin{proof}
By \eqref{TD} and Corollary \ref{cor3.1}, we get
\begin{align}
\notag u_{\alpha\beta\bar{\beta}}+&u_{\alpha\bar{\beta}\beta}=W_{\alpha}(u_{\beta\bar{\beta}})-\Gamma_{\alpha\beta}^{\mu}u_{\mu\bar{\beta}}
-\Gamma_{\alpha\beta}^{\bar{\mu}}u_{\bar{\mu}\bar{\beta}}-\Gamma_{\alpha\bar{\beta}}^{\bar{\mu}}u_{\beta\bar{\mu}}
+W_{\alpha}(u_{\bar{\beta}\beta})-\Gamma_{\alpha\bar{\beta}}^{\bar{\mu}}u_{\bar{\mu}\beta}
-\Gamma_{\alpha\beta}^{\mu}u_{\bar{\beta}\mu}-\Gamma_{\alpha\beta}^{\bar{\mu}}u_{\bar{\beta}\bar{\mu}}\\
\notag &=W_{\alpha}(u_{\beta\bar{\beta}}+u_{\bar{\beta}\beta})
-\bigg(\Gamma_{\alpha\beta}^{\mu}u_{\mu\bar{\beta}}+\Gamma_{\alpha\bar{\beta}}^{\bar{\mu}}u_{\beta\bar{\mu}}\bigg)
-\bigg(\Gamma_{\alpha\bar{\beta}}^{\bar{\mu}}u_{\bar{\mu}\beta}+\Gamma_{\alpha\beta}^{\mu}u_{\bar{\beta}\mu}\bigg)
-\bigg(\Gamma_{\alpha\beta}^{\bar{\mu}}u_{\bar{\mu}\bar{\beta}}+\Gamma_{\alpha\beta}^{\bar{\mu}}u_{\bar{\beta}\bar{\mu}}\bigg)\\
\notag &=W_{\alpha}(u_{\beta\bar{\beta}}+u_{\bar{\beta}\beta})=(\Delta_{b}u)_{\alpha},
\end{align}
by the first two identities in \eqref{eq2.9}.
\end{proof}

\subsection{The commutation formulae}
\begin{pro}\label{pro3.1}
For the second-order covariant derivatives of a function $u$, we have the following commutation formulae.
\begin{equation}
\notag u_{ij}-u_{ji}=-\tau(W_i,W_j)u.
\end{equation}
\end{pro}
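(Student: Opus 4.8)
The plan is to read off the two second-order covariant derivatives straight from the definition \eqref{eq3.1} and subtract them, since the statement is purely a consequence of how the Hessian of a function interacts with the torsion of $\nabla$. For a function the first covariant derivative is simply $u_k = W_k u$, so \eqref{eq3.1} gives
$$u_{ij} = W_i(W_j u) - (\nabla_{W_i}W_j)u, \qquad u_{ji} = W_j(W_i u) - (\nabla_{W_j}W_i)u.$$

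Subtracting, the second-derivative terms assemble into the Lie bracket, $W_i(W_j u) - W_j(W_i u) = [W_i,W_j]u$, while the connection terms contribute $-(\nabla_{W_i}W_j - \nabla_{W_j}W_i)u$. The key step is then to invoke the definition of the torsion of the TWT connection, $\tau(X,Y) = \nabla_X Y - \nabla_Y X - [X,Y]$, rewritten as $\nabla_{W_i}W_j - \nabla_{W_j}W_i = \tau(W_i,W_j) + [W_i,W_j]$. Substituting this into the previous expression, the two $[W_i,W_j]u$ terms cancel and one is left with $u_{ij} - u_{ji} = -\tau(W_i,W_j)u$, as claimed.

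I expect no genuine obstacle here: the identity is frame-independent and holds verbatim with arbitrary vector fields $X,Y$ in place of $W_i,W_j$, so none of the vanishing relations of \eqref{QBD} or the orthonormality of $\{W_j\}$ play any role. The only points requiring mild care are the sign convention in the definition of $\tau$ and keeping the order of indices consistent, so that the antisymmetric part of the Hessian comes out as exactly $-\tau(W_i,W_j)u$ rather than with the opposite sign. Morally, the formula simply records the standard fact that the failure of the second covariant derivative of a function to be symmetric is measured by the torsion tensor.
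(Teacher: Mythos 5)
Your proposal is correct and follows exactly the same route as the paper: expand $u_{ij}-u_{ji}$ from the definition \eqref{eq3.1}, recognize the Lie bracket $[W_i,W_j]u$, and absorb it using $\tau(X,Y)=\nabla_XY-\nabla_YX-[X,Y]$. Your remark that the identity is frame-independent and needs none of the relations in \eqref{QBD} is also accurate.
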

\begin{proof}
By the definition \eqref{eq3.1}, we get $u_{ij}-u_{ji}=W_iW_ju-\nabla_{W_i}W_ju-W_jW_iu+\nabla_{W_j}W_iu
=-\big(\nabla_{W_i}W_j-\nabla_{W_j}W_i-[W_i,W_j]\big)u=-\tau(W_i,W_j)u.$
\end{proof}
In particular, Proposition \ref{pro3.1} and \eqref{TWT} implies that
\begin{equation}\label{SDCF}
\begin{aligned}
&u_{\alpha\bar{\beta}}-u_{\bar{\beta}\alpha}=-\tau(W_{\alpha},W_{\bar{\beta}})u
=-2d\theta(W_{\alpha},W_{\bar{\beta}})Tu=2ih_{\alpha\bar{\beta}}u_0,\\
&u_{\alpha\beta}=u_{\beta\alpha},\\
&u_{0\alpha}-u_{\alpha0}=-\tau(T,W_{\alpha})=-A_{\alpha}^{\bar{\beta}}u_{\bar{\beta}}.
\end{aligned}
\end{equation}
Following Proposition 9.2 and 9.3 in \cite{BD1} in the pseudohermitian case, we call the relations between the third-order covariant derivatives of functions $u_{abc}$ and $u_{acb}$ {\it the inner commutation formulae} and the relations between $u_{abc}$ and $u_{bac}$  {\it the outer commutation formulae}.
\begin{pro}
(1) We have the inner commutation formulae
\begin{equation}\label{ICF}
\begin{aligned}
&u_{\bar{\alpha}\beta\gamma}=u_{\bar{\alpha}\gamma\beta},\\
&u_{\alpha\bar{\beta}\gamma}=u_{\alpha\gamma\bar{\beta}}-2ih_{\gamma\bar{\beta}}u_{\alpha0}.
\end{aligned}
\end{equation}
(2) We have the outer commutation formulae
\begin{equation}\label{OCF}
\begin{aligned}
&u_{\bar{\rho}\gamma\alpha}=u_{\gamma\bar{\rho}\alpha}-2ih_{\gamma\bar{\rho}}u_{0\alpha}+u_{\beta}R_{\alpha\ \gamma\bar{\rho}}^{\ \beta}+\frac{i}{2}Q_{\alpha\gamma,\bar{\rho}}^{\bar{\beta}}u_{\bar{\beta}},\\
&u_{\bar{\rho}\bar{\gamma}\alpha}=u_{\bar{\gamma}\bar{\rho}\alpha}+2i\bigg(A_{\bar{\gamma}}^{\beta}h_{\alpha\bar{\rho}}-A_{\bar{\rho}}^{\beta}h_{\alpha\bar{\gamma}}\bigg)u_{\beta}
-\frac{i}{2}Q_{\bar{\rho}\bar{\beta},\alpha}^{\gamma}u_{\beta}-\frac{1}{4}Q_{\alpha\beta}^{\bar{\mu}}Q_{\bar{\rho}\bar{\beta}}^{\gamma}u_{\bar{\mu}}.
\end{aligned}
\end{equation}
\end{pro}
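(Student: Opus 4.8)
The plan is to derive both families from a single observation: the third covariant derivative is the covariant differential of the Hessian, $\nabla^3u=\nabla(\nabla^2u)$, so that permuting indices is governed by the torsion-corrected Ricci identity for the $1$-form $\varphi:=du$ (with $\varphi(W_m)=u_m$). Starting from \eqref{eq3.2} one first checks the pointwise identity $\nabla^3u(X,Y,Z)=(\nabla_X\nabla_Y\varphi)(Z)-(\nabla_{\nabla_XY}\varphi)(Z)$, from which two master formulae follow. Antisymmetrizing in the last two slots and using that $\nabla_{W_j}$ commutes with the slot-transposition gives the inner master identity $u_{jkl}-u_{jlk}=(\nabla_{W_j}B)(W_k,W_l)$, where $B(X,Y):=\nabla^2u(X,Y)-\nabla^2u(Y,X)=-\tau(X,Y)u$ is the skew part of the Hessian computed in Proposition~\ref{pro3.1}. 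Antisymmetrizing in the first two slots, using $\nabla_XY-\nabla_YX=[X,Y]+\tau(X,Y)$ together with $(R(X,Y)\varphi)(Z)=-\varphi(R(X,Y)Z)$, gives the outer master identity $u_{jkl}-u_{kjl}=-R_{l\ jk}^{\ s}u_s-\tau_{jk}^mu_{ml}$, with the curvature normalized as in \eqref{SE}.

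For the inner formulae \eqref{ICF} I would then use that on horizontal arguments $B(X,Y)=-2\,d\theta(X,Y)u_0$ and that $\nabla h=0$ forces $(\nabla_Zd\theta)(X,Y)=h(X,(\nabla_ZJ)Y)=h(X,Q(Y,Z))$. In the case $(j,k,l)=(\bar\alpha,\beta,\gamma)$ both $d\theta(W_\beta,W_\gamma)=ih_{\beta\gamma}=0$ and the derivative term $Q_{\gamma\bar\alpha}^{\bar\mu}h_{\beta\bar\mu}=0$ vanish by \eqref{QBD}, giving $u_{\bar\alpha\beta\gamma}=u_{\bar\alpha\gamma\beta}$. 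In the case $(\alpha,\bar\beta,\gamma)$ the undifferentiated piece produces $-2ih_{\gamma\bar\beta}W_\alpha(u_0)=-2ih_{\gamma\bar\beta}u_{\alpha0}$ by \eqref{SD}, while the $\nabla d\theta$ piece is $h(W_{\bar\beta},Q(W_\gamma,W_\alpha))=Q_{\gamma\alpha}^{\bar\mu}h_{\bar\beta\bar\mu}=0$ because $h_{\bar\beta\bar\mu}=0$; this yields the second line of \eqref{ICF}.

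For the outer formulae \eqref{OCF} I would specialize the outer master identity. The torsion term is elementary: from \eqref{TWT} and \eqref{eq2.1}, $\tau(W_{\bar\rho},W_\gamma)=2ih_{\gamma\bar\rho}T$ supplies $-2ih_{\gamma\bar\rho}u_{0\alpha}$ in the first line, whereas $\tau(W_{\bar\rho},W_{\bar\gamma})=0$ so the second line receives no contribution from this term. For the curvature term $-R_{\alpha\ jk}^{\ s}u_s$ one checks $R_{\alpha\ jk}^{\ 0}=0$ from \eqref{eq2.8a} (all $\Gamma^0$ vanish and the relevant $J_{cd}$ vanish), keeps the genuinely $(1,0)$-valued part $R_{\alpha\ \gamma\bar\rho}^{\ \beta}$ as written, and evaluates the anomalous mixed component from \eqref{eq2.8a}. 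Using $\Gamma_{\alpha\beta}^{\bar\gamma}=-\tfrac{i}{2}Q_{\beta\alpha}^{\bar\gamma}$ and the vanishing list \eqref{QBD}, a direct computation matches $R_{\alpha\ \gamma\bar\rho}^{\ \bar\beta}$ term by term with $\tfrac{i}{2}Q_{\alpha\gamma,\bar\rho}^{\bar\beta}$ as expanded in \eqref{eq2.86}, which is exactly the last term of the first line of \eqref{OCF}. For the second line the doubly-barred component $R_{\alpha\ \bar\gamma\bar\rho}^{\ \beta}$ produces, through the structure equations \eqref{SE}, the undifferentiated torsion terms $2i(A_{\bar\gamma}^\beta h_{\alpha\bar\rho}-A_{\bar\rho}^\beta h_{\alpha\bar\gamma})$ together with a $\nabla Q$ contribution, while $R_{\alpha\ \bar\gamma\bar\rho}^{\ \bar\mu}$ contributes the quadratic term $-\tfrac14 Q_{\alpha\beta}^{\bar\mu}Q_{\bar\rho\bar\beta}^{\gamma}$.

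I expect the main obstacle to be the evaluation of the anomalous mixed curvature components $R_{\alpha\ \gamma\bar\rho}^{\ \bar\beta}$, $R_{\alpha\ \bar\gamma\bar\rho}^{\ \beta}$ and $R_{\alpha\ \bar\gamma\bar\rho}^{\ \bar\mu}$, all of which vanish in the pseudohermitian case. Here every product of connection coefficients in \eqref{eq2.8a} carrying a factor $\Gamma_{\alpha\beta}^{\bar\gamma}=-\tfrac{i}{2}Q_{\beta\alpha}^{\bar\gamma}$ survives and generates the $\nabla Q$ and $Q\cdot Q$ contributions. Keeping these organized --- matching the covariant derivative $\nabla Q$ against the explicit expansion \eqref{eq2.86}, repeatedly invoking the skew-symmetries \eqref{eq2.9}, and extracting the undifferentiated torsion $A$ from the doubly-barred curvature via \eqref{SE} --- is where the bookkeeping is heaviest and where the sign and coefficient conventions (in particular $d\theta(X,Y)=h(X,JY)$ and the recurring factor $\tfrac{i}{2}$) must be tracked with care.
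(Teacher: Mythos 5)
Your proposal is correct, and it reaches \eqref{ICF} and \eqref{OCF} by a genuinely different route from the paper. The paper proves the inner formulae by brute-force expansion of \eqref{TD} combined with the second-order commutation \eqref{SDCF}, and proves the outer formulae by Cartan's method: exterior differentiation of the identity $du_{\alpha}=u_{\beta\alpha}\theta^{\beta}+u_{\bar{\beta}\alpha}\theta^{\bar{\beta}}+u_{0\alpha}\theta+u_{\beta}\omega_{\alpha}^{\beta}-\tfrac{i}{2}Q_{\alpha\beta}^{\bar{\rho}}u_{\bar{\rho}}\theta^{\beta}$ and comparison of the coefficients of $\theta^{\gamma}\wedge\theta^{\bar{\rho}}$ and $\theta^{\bar{\gamma}}\wedge\theta^{\bar{\rho}}$. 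You instead package everything into two torsion-corrected Ricci identities for $\varphi=du$: the inner one $u_{jkl}-u_{jlk}=(\nabla_{W_j}B)(W_k,W_l)$ with $B=-\tau(\cdot,\cdot)u$, and the outer one $u_{jkl}-u_{kjl}=-R_{l\ jk}^{\ s}u_s-\tau_{jk}^{m}u_{ml}$; both master identities are correct (note that the curvature operator annihilates functions, so $(R(X,Y)\varphi)(Z)=-\varphi(R(X,Y)Z)$ holds, and $\nabla_Z$ preserves $HM$ because $\Gamma_{jk}^{0}=0$, so $\tau=2d\theta\otimes T$ may be differentiated on horizontal slots). I checked the two computations you flag as the main obstacle: from \eqref{eq2.8a} with the vanishing list \eqref{QBD} one indeed gets $R_{\alpha\ \gamma\bar{\rho}}^{\ \bar{\beta}}=\tfrac{i}{2}\big(W_{\bar{\rho}}Q_{\alpha\gamma}^{\bar{\beta}}-\Gamma_{\bar{\rho}\alpha}^{\mu}Q_{\mu\gamma}^{\bar{\beta}}-\Gamma_{\bar{\rho}\gamma}^{\mu}Q_{\alpha\mu}^{\bar{\beta}}+\Gamma_{\bar{\rho}\bar{\mu}}^{\bar{\beta}}Q_{\alpha\gamma}^{\bar{\mu}}\big)=\tfrac{i}{2}Q_{\alpha\gamma,\bar{\rho}}^{\bar{\beta}}$ by \eqref{eq2.86}, and $R_{\alpha\ \bar{\gamma}\bar{\rho}}^{\ \bar{\mu}}=\tfrac14\big(Q_{\bar{\gamma}\bar{\rho}}^{\lambda}-Q_{\bar{\rho}\bar{\gamma}}^{\lambda}\big)Q_{\alpha\lambda}^{\bar{\mu}}=-\tfrac14Q_{\alpha\beta}^{\bar{\mu}}Q_{\bar{\rho}\bar{\beta}}^{\gamma}$ by \eqref{eq2.9}, matching the extra terms in \eqref{OCF}. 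Your route is more conceptual and cleanly separates the torsion, curvature and Tanno contributions, at the cost of having to evaluate the anomalous mixed curvature components explicitly; the paper's exterior-derivative computation produces them implicitly but with heavier bookkeeping. One small caveat: for the component $R_{\alpha\ \bar{\gamma}\bar{\rho}}^{\ \beta}$ the expression $2i(A_{\bar{\gamma}}^{\beta}h_{\alpha\bar{\rho}}-A_{\bar{\rho}}^{\beta}h_{\alpha\bar{\gamma}})-\tfrac{i}{2}h^{\beta\bar{\sigma}}h_{\lambda\bar{\gamma}}Q_{\bar{\rho}\bar{\sigma},\alpha}^{\lambda}$ does not follow from \eqref{eq2.8a} or \eqref{SE} alone; both your argument and the paper's rely on (43) of \cite{BD1} (a Bianchi-type identity) for it, and you should cite that explicitly rather than attributing it to the structure equations.
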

\begin{proof}
(1) The first identity of \eqref{ICF} follows directly from the second identity in \eqref{TD} and $u_{\alpha\beta}=u_{\beta\alpha}$ in \eqref{SDCF}.

Taking conjugation of the fourth identity in \eqref{TD}, we get $$u_{\alpha\gamma\bar{\beta}}=W_{\alpha}(u_{\gamma\bar{\beta}})-\Gamma_{\alpha\gamma}^{\mu}u_{\mu\bar{\beta}}
-\Gamma_{\alpha\gamma}^{\bar{\mu}}u_{\bar{\mu}\bar{\beta}}-\Gamma_{\alpha\bar{\beta}}^{\bar{\mu}}u_{\gamma\bar{\mu}}.$$
So by \eqref{eq2.9}, \eqref{TD} and \eqref{SDCF}, we get
\begin{align}
\notag u_{\alpha\bar{\beta}\gamma}&=W_{\alpha}(u_{\bar{\beta}\gamma})-\Gamma_{\alpha\bar{\beta}}^{\bar{\mu}}u_{\bar{\mu}\gamma}
-\Gamma_{\alpha\gamma}^{\mu}u_{\bar{\beta}\mu}-\Gamma_{\alpha\gamma}^{\bar{\mu}}u_{\bar{\beta}\bar{\mu}}\\
\notag &=W_{\alpha}\bigg(u_{\gamma\bar{\beta}}-2ih_{\gamma\bar{\beta}}u_0\bigg)
-\Gamma_{\alpha\bar{\beta}}^{\bar{\mu}}\bigg(u_{\gamma\bar{\mu}}-2ih_{\gamma\bar{\mu}}u_0\bigg)
-\Gamma_{\alpha\gamma}^{\mu}\bigg(u_{\mu\bar{\beta}}-2ih_{\mu\bar{\beta}}u_0\bigg)
-\Gamma_{\alpha\gamma}^{\bar{\mu}}u_{\bar{\mu}\bar{\beta}}\\
\notag &=W_{\alpha}(u_{\gamma\bar{\beta}})-\Gamma_{\alpha\gamma}^{\mu}u_{\mu\bar{\beta}}
-\Gamma_{\alpha\gamma}^{\bar{\mu}}u_{\bar{\mu}\bar{\beta}}-\Gamma_{\alpha\bar{\beta}}^{\bar{\mu}}u_{\gamma\bar{\mu}}
-2i\bigg(h_{\gamma\bar{\beta}}W_{\alpha}(u_0)-\Gamma_{\alpha\bar{\beta}}^{\bar{\gamma}}u_0-\Gamma_{\alpha\gamma}^{\beta}u_0\bigg)\\
&=u_{\alpha\gamma\bar{\beta}}-2ih_{\gamma\bar{\beta}}u_{\alpha0}.
\end{align}
The second identity of \eqref{ICF} is proved.

(2) For the first identity in \eqref{OCF}, note that
\begin{equation}\label{eq3.17}
\begin{aligned}
du_{\alpha}&=(W_{\beta}u_{\alpha})\theta^{\beta}+(W_{\bar{\beta}}u_{\alpha})\theta^{\bar{\beta}}+T(u_{\alpha})\theta\\
&=\bigg(u_{\beta\alpha}+\Gamma_{\beta\alpha}^{\rho}u_{\rho}-\frac{i}{2}Q_{\alpha\beta}^{\bar{\rho}}u_{\bar{\rho}}\bigg)\theta^{\beta}
+(u_{\bar{\beta}\alpha}+\Gamma_{\bar{\beta}\alpha}^{\rho}u_{\rho})\theta^{\bar{\beta}}+(u_{0\alpha}+\Gamma_{0\alpha}^{\rho}u_{\rho})\theta\\ &=u_{\beta\alpha}\theta^{\beta}+u_{\bar{\beta}\alpha}\theta^{\bar{\beta}}+u_{0\alpha}\theta+u_{\beta}\omega_{\alpha}^{\beta}-\frac{i}{2}Q_{\alpha\beta}^{\bar{\rho}}u_{\bar{\rho}}\theta^{\beta}
\end{aligned}
\end{equation}
by using \eqref{SD} and $\omega_{\alpha}^{\rho}=\Gamma_{\beta\alpha}^{\rho}\theta^{\beta}+\Gamma_{\bar{\beta}\alpha}^{\rho}\theta^{\bar{\beta}}+\Gamma_{0\alpha}^{\rho}\theta$.
Taking exterior differentiation on both sides of \eqref{eq3.17}, we get
\begin{align}\label{eq3.18}
\notag 0&=du_{\beta\alpha}\wedge\theta^{\beta}+du_{\bar{\beta}\alpha}\wedge\theta^{\bar{\beta}}+du_{0\alpha}\wedge\theta
+u_{\beta\alpha}d\theta^{\beta}+u_{\bar{\beta}\alpha}d\theta^{\bar{\beta}}+u_{0\alpha}d\theta\\
&\ \ \ +du_{\beta}\wedge\omega_{\alpha}^{\beta}+u_{\beta}d\omega_{\alpha}^{\beta}
-\frac{i}{2}d(Q_{\alpha\beta}^{\bar{\rho}}u_{\bar{\rho}}){\wedge}\theta^{\beta}
-\frac{i}{2}Q_{\alpha\beta}^{\bar{\rho}}u_{\bar{\rho}}d\theta^{\beta}.
\end{align}
Note that
\begin{equation}\label{eq3.22}
\begin{aligned}
du_{\beta\alpha}&=W_{\gamma}(u_{\beta\alpha})\theta^{\gamma}+W_{\bar{\gamma}}(u_{\beta\alpha})\theta^{\bar{\gamma}}+T(u_{\beta\alpha})\theta,\\
\omega_{\alpha}^{\beta}&=\Gamma_{\gamma\alpha}^{\beta}\theta^{\gamma}+\Gamma_{\bar{\gamma}\alpha}^{\beta}\theta^{\bar{\gamma}},{\quad}mod{\quad}{\theta},\\
d\theta^{\beta}&=\theta^{\gamma}\wedge\omega_{\gamma}^{\beta}+\theta^{\bar{\gamma}}\wedge\omega_{\bar{\gamma}}^{\beta}
=\Gamma_{a\gamma}^{\beta}\theta^{\gamma}\wedge\theta^a+\Gamma_{a\bar{\gamma}}^{\beta}\theta^{\bar{\gamma}}\wedge\theta^a
=\Gamma_{\bar{\rho}\gamma}^{\beta}\theta^{\gamma}\wedge\theta^{\bar{\rho}}+\Gamma_{\bar{\rho}\bar{\gamma}}^{\beta}\theta^{\bar{\gamma}}\wedge\theta^{\bar{\rho}},
{\quad}mod{\quad}\theta,\ \theta^{\gamma}\wedge\theta^{\rho},\\
d\theta^{\bar{\beta}}&=\theta^{\gamma}\wedge\omega_{\gamma}^{\bar{\beta}}+\theta^{\bar{\gamma}}\wedge\omega_{\bar{\gamma}}^{\bar{\beta}}
=\Gamma_{a\gamma}^{\bar{\beta}}\theta^{\gamma}\wedge\theta^a+\Gamma_{a\bar{\gamma}}^{\bar{\beta}}\theta^{\bar{\gamma}}\wedge\theta^a
=-\Gamma_{\gamma\bar{\rho}}^{\bar{\beta}}\theta^{\gamma}\wedge\theta^{\bar{\rho}}+\Gamma_{\bar{\rho}\bar{\gamma}}^{\bar{\beta}}\theta^{\bar{\gamma}}\wedge\theta^{\bar{\rho}},
{\ }mod{\quad}\theta,\ \theta^{\gamma}\wedge\theta^{\rho},\\ d\omega_{\alpha}^{\beta}&=\omega_{\alpha}^{\mu}\wedge\omega_{\mu}^{\beta}
+\omega_{\alpha}^{\bar{\mu}}\wedge\omega_{\bar{\mu}}^{\beta}
+R_{\alpha\ \lambda\bar{\mu}}^{\ \beta}\theta^{\lambda}\wedge\theta^{\bar{\mu}}+\frac{1}{2}R_{\alpha\ \bar{\lambda}\bar{\mu}}^{\ \beta}\theta^{\bar{\lambda}}\wedge\theta^{\bar{\mu}}\\
&=\Gamma_{\gamma\alpha}^{\mu}\Gamma_{\bar{\rho}\mu}^{\beta}\theta^{\gamma}\wedge\theta^{\bar{\rho}}
+\Gamma_{\bar{\gamma}\alpha}^{\mu}\Gamma_{\rho\mu}^{\beta}\theta^{\bar{\gamma}}\wedge\theta^{\rho}
+\Gamma_{\gamma\alpha}^{\bar{\mu}}\Gamma_{\bar{\rho}\bar{\mu}}^{\beta}\theta^{\gamma}\wedge\theta^{\bar{\rho}}
+R_{\alpha\ \gamma\bar{\rho}}^{\ \beta}\theta^{\gamma}\wedge\theta^{\bar{\rho}}\\
&\ \ \ +\Gamma_{\bar{\gamma}\alpha}^{\mu}\Gamma_{\bar{\rho}\mu}^{\beta}\theta^{\bar{\gamma}}\wedge\theta^{\bar{\rho}}
+\frac{1}{2}R_{\alpha\ \bar{\gamma}\bar{\rho}}^{\ \beta}\theta^{\bar{\gamma}}\wedge\theta^{\bar{\rho}},{\qquad}{\qquad}{\qquad}{\qquad}{\qquad}{\qquad}{\qquad}{\qquad}{\qquad}mod{\quad}\theta,\ \theta^{\gamma}\wedge\theta^{\rho},
\end{aligned}
\end{equation}
by \eqref{SE} and $\Gamma_{\alpha\bar{\beta}}^{\gamma}=0$ in \eqref{QBD}. Substituting \eqref{eq3.22} to the corresponding terms in \eqref{eq3.18} and comparing the coefficients of $\theta^{\gamma}\wedge\theta^{\bar{\rho}}$, we get
\begin{align}\label{eq3.27}
\notag 0&=-W_{\bar{\rho}}u_{\gamma\alpha}+W_{\gamma}u_{\bar{\rho}\alpha}+u_{\beta\alpha}\Gamma_{\bar{\rho}\gamma}^{\beta}-u_{\bar{\beta}\alpha}\Gamma_{\gamma\bar{\rho}}^{\bar{\beta}}
-2ih_{\gamma\bar{\rho}}u_{0\alpha}+W_{\gamma}(u_{\beta})\Gamma_{\bar{\rho}\alpha}^{\beta}-W_{\bar{\rho}}(u_{\beta})\Gamma_{\gamma\alpha}^{\beta}\\
\notag &\ \ \ +u_{\beta}\bigg(R_{\alpha\ \gamma\bar{\rho}}^{\ \beta}+\Gamma_{\gamma\alpha}^{\mu}\Gamma_{\bar{\rho}\mu}^{\beta}-\Gamma_{\bar{\rho}\alpha}^{\mu}\Gamma_{\gamma\mu}^{\beta}
+\Gamma_{\gamma\alpha}^{\bar{\mu}}\Gamma_{\bar{\rho}\bar{\mu}}^{\beta}\bigg)
+\frac{i}{2}W_{\bar{\rho}}\bigg(Q_{\alpha\gamma}^{\bar{\beta}}u_{\bar{\beta}}\bigg)
-\frac{i}{2}Q_{\alpha\beta}^{\bar{\mu}}u_{\bar{\mu}}\Gamma_{\bar{\rho}\gamma}^{\beta}\\
\notag &=-W_{\bar{\rho}}u_{\gamma\alpha}+W_{\gamma}u_{\bar{\rho}\alpha}+u_{\beta\alpha}\Gamma_{\bar{\rho}\gamma}^{\beta}
-u_{\bar{\beta}\alpha}\Gamma_{\gamma\bar{\rho}}^{\bar{\beta}}-2ih_{\gamma\bar{\rho}}u_{0\alpha}+u_{\beta}R_{\alpha\ \gamma\bar{\rho}}^{\ \beta}\\
\notag &\ \ \ +\bigg(W_{\gamma}(u_{\beta})\Gamma_{\bar{\rho}\alpha}^{\beta}-u_{\mu}\Gamma_{\gamma\beta}^{\mu}\Gamma_{\bar{\rho}\alpha}^{\beta}
+\frac{i}{2}u_{\bar{\mu}}Q_{\beta\gamma}^{\bar{\mu}}\Gamma_{\bar{\rho}\alpha}^{\beta}\bigg)
-\frac{i}{2}u_{\bar{\mu}}Q_{\beta\gamma}^{\bar{\mu}}\Gamma_{\bar{\rho}\alpha}^{\beta}
-\bigg(W_{\bar{\rho}}(u_{\beta})\Gamma_{\gamma\alpha}^{\beta}-u_{\mu}\Gamma_{\bar{\rho}\beta}^{\mu}\Gamma_{\gamma\alpha}^{\beta}\bigg)\\
&\ \ \ -\frac{i}{2}u_{\beta}Q_{\alpha\gamma}^{\bar{\mu}}\Gamma_{\bar{\rho}\bar{\mu}}^{\beta}
+\frac{i}{2}W_{\bar{\rho}}\bigg(Q_{\alpha\gamma}^{\bar{\beta}}u_{\bar{\beta}}\bigg)
-\frac{i}{2}Q_{\alpha\beta}^{\bar{\mu}}u_{\bar{\mu}}\Gamma_{\bar{\rho}\gamma}^{\beta}.
\end{align}
Substitute
\begin{equation}
\begin{aligned}
\notag &W_{\gamma}(u_{\beta})-\Gamma_{\gamma\beta}^{\mu}u_{\mu}+\frac{i}{2}Q_{\beta\gamma}^{\bar{\mu}}u_{\bar{\mu}}=u_{\gamma\beta},
{\qquad}W_{\bar{\rho}}(u_{\beta})-\Gamma_{\bar{\rho}\beta}^{\mu}u_{\mu}=u_{\bar{\rho}\beta},
\end{aligned}
\end{equation}
by \eqref{SD}, into two brackets in \eqref{eq3.27} to get
\begin{align}
\notag 0&=-W_{\bar{\rho}}u_{\gamma\alpha}+W_{\gamma}u_{\bar{\rho}\alpha}+u_{\beta\alpha}\Gamma_{\bar{\rho}\gamma}^{\beta}-u_{\bar{\beta}\alpha}\Gamma_{\gamma\bar{\rho}}^{\bar{\beta}}
-2ih_{\gamma\bar{\rho}}u_{0\alpha}+u_{\beta}R_{\alpha\ \gamma\bar{\rho}}^{\ \beta}+u_{\gamma\beta}\Gamma_{\bar{\rho}\alpha}^{\beta}-u_{\bar{\rho}\beta}\Gamma_{\gamma\alpha}^{\beta}\\
\notag &\ \ \ -\frac{i}{2}u_{\beta}Q_{\alpha\gamma}^{\bar{\mu}}\Gamma_{\bar{\rho}\bar{\mu}}^{\beta}-\frac{i}{2}Q_{\beta\gamma}^{\bar{\mu}}u_{\bar{\mu}}\Gamma_{\bar{\rho}\alpha}^{\beta}
+\frac{i}{2}W_{\bar{\rho}}(Q_{\alpha\gamma}^{\bar{\beta}})u_{\bar{\beta}}+\frac{i}{2}Q_{\alpha\gamma}^{\bar{\beta}}(W_{\bar{\rho}}u_{\bar{\beta}})-\frac{i}{2}Q_{\alpha\beta}^{\bar{\mu}}u_{\bar{\mu}}\Gamma_{\bar{\rho}\gamma}^{\beta}\\
\notag &=\bigg(-W_{\bar{\rho}}u_{\gamma\alpha}+\Gamma_{\bar{\rho}\gamma}^{\beta}u_{\beta\alpha}+\Gamma_{\bar{\rho}\alpha}^{\beta}u_{\gamma\beta}\bigg)
+\bigg(W_{\gamma}u_{\bar{\rho}\alpha}-u_{\bar{\beta}\alpha}\Gamma_{\gamma\bar{\rho}}^{\bar{\beta}}-u_{\bar{\rho}\beta}\Gamma_{\gamma\alpha}^{\beta}+\frac{i}{2}Q_{\alpha\gamma}^{\bar{\mu}}u_{\bar{\rho}\bar{\mu}}\bigg)\\
\notag &\ \ \ -\frac{i}{2}Q_{\alpha\gamma}^{\bar{\mu}}u_{\bar{\rho}\bar{\mu}}-2ih_{\gamma\bar{\rho}}u_{0\alpha}+u_{\beta}R_{\alpha\ \gamma\bar{\rho}}^{\ \beta}+\frac{i}{2}Q_{\alpha\gamma}^{\bar{\mu}}\bigg(W_{\bar{\rho}}u_{\bar{\mu}}-\Gamma_{\bar{\rho}\bar{\mu}}^{\beta}u_{\beta}-\Gamma_{\bar{\rho}\bar{\mu}}^{\bar{\beta}}u_{\bar{\beta}}\bigg)\\
\notag &\ \ \ +\frac{i}{2}\bigg(W_{\bar{\rho}}Q_{\alpha\gamma}^{\bar{\mu}}-\Gamma_{\bar{\rho}\alpha}^{\beta}Q_{\beta\gamma}^{\bar{\mu}}
-\Gamma_{\bar{\rho}\gamma}^{\beta}Q_{\alpha\beta}^{\bar{\mu}}+\Gamma_{\bar{\rho}\bar{\beta}}^{\bar{\mu}}Q_{\alpha\gamma}^{\bar{\beta}}\bigg)u_{\bar{\mu}}\\
\notag &=-u_{\bar{\rho}\gamma\alpha}+u_{\gamma\bar{\rho}\alpha}-2ih_{\gamma\bar{\rho}}u_{0\alpha}+u_{\beta}R_{\alpha\ \gamma\bar{\rho}}^{\ \beta}+\frac{i}{2}Q_{\alpha\gamma,\bar{\rho}}^{\bar{\mu}}u_{\bar{\mu}},
\end{align}
by \eqref{eq2.86}, \eqref{SD} and \eqref{TD}. The first identity of \eqref{OCF} holds.

To prove the second identity in \eqref{OCF}, we consider the components of $\theta^{\bar{\gamma}}\wedge\theta^{\bar{\rho}}$ in \eqref{eq3.18} to get
\begin{align}
\notag 0&=\bigg(W_{\bar{\gamma}}u_{\bar{\rho}\alpha}+u_{\beta\alpha}\Gamma_{\bar{\rho}\bar{\gamma}}^{\beta}
+u_{\bar{\beta}\alpha}\Gamma_{\bar{\rho}\bar{\gamma}}^{\bar{\beta}}+u_{\beta}\Gamma_{\bar{\gamma}\alpha}^{\mu}\Gamma_{\bar{\rho}\mu}^{\beta}
+\frac{1}{2}u_{\beta}R_{\alpha\ \bar{\gamma}\bar{\rho}}^{\ \beta}+W_{\bar{\gamma}}u_{\beta}\Gamma_{\bar{\rho}\alpha}^{\beta}
-\frac{i}{2}u_{\bar{\mu}}Q_{\alpha\beta}^{\bar{\mu}}\Gamma_{\bar{\rho}\bar{\gamma}}^{\beta}\bigg)\theta^{\bar{\gamma}}\wedge\theta^{\bar{\rho}}\\
\notag &=\bigg(W_{\bar{\gamma}}u_{\bar{\rho}\alpha}-u_{\beta\alpha}\Gamma_{\bar{\gamma}\bar{\rho}}^{\beta}
-u_{\bar{\beta}\alpha}\Gamma_{\bar{\gamma}\bar{\rho}}^{\bar{\beta}}-u_{\bar{\rho}\beta}\Gamma_{\bar{\gamma}\alpha}^{\beta}
+\frac{1}{2}u_{\beta}R_{\alpha\ \bar{\gamma}\bar{\rho}}^{\ \beta}-\frac{i}{2}u_{\bar{\mu}}Q_{\alpha\beta}^{\bar{\mu}}\Gamma_{\bar{\rho}\bar{\gamma}}^{\beta}\bigg)\theta^{\bar{\gamma}}\wedge\theta^{\bar{\rho}}\\
\notag &=\bigg(u_{\bar{\gamma}\bar{\rho}\alpha}+\frac{1}{2}u_{\beta}R_{\alpha\ \bar{\gamma}\bar{\rho}}^{\ \beta}+\frac{1}{4}u_{\bar{\mu}}Q_{\alpha\beta}^{\bar{\mu}}Q_{\bar{\gamma}\bar{\rho}}^{\beta}\bigg)\theta^{\bar{\gamma}}\wedge\theta^{\bar{\rho}},
\end{align}
by
$$\bigg(W_{\bar{\gamma}}u_{\beta}\Gamma_{\bar{\rho}\alpha}^{\beta}+u_{\beta}\Gamma_{\bar{\gamma}\alpha}^{\mu}\Gamma_{\bar{\rho}\mu}^{\beta}\bigg)\theta^{\bar{\gamma}}\wedge\theta^{\bar{\rho}}
=\bigg(-W_{\bar{\rho}}u_{\beta}\Gamma_{\bar{\gamma}\alpha}^{\beta}+u_{\mu}\Gamma_{\bar{\rho}\beta}^{\mu}\Gamma_{\bar{\gamma}\alpha}^{\beta}\bigg)\theta^{\bar{\gamma}}\wedge\theta^{\bar{\rho}}
=-u_{\bar{\rho}\beta}\Gamma_{\bar{\gamma}\alpha}^{\beta}\theta^{\bar{\gamma}}\wedge\theta^{\bar{\rho}},$$
by using \eqref{SD}. Equivalently, we have
\begin{align}
\notag 0&=u_{\bar{\gamma}\bar{\rho}\alpha}-u_{\bar{\rho}\bar{\gamma}\alpha}+\frac{1}{2}u_{\beta}R_{\alpha\ \bar{\gamma}\bar{\rho}}^{\ \beta}-\frac{1}{2}u_{\beta}R_{\alpha\ \bar{\rho}\bar{\gamma}}^{\ \beta}+\frac{1}{4}u_{\bar{\mu}}Q_{\alpha\beta}^{\bar{\mu}}\big(Q_{\bar{\gamma}\bar{\rho}}^{\beta}-Q_{\bar{\rho}\bar{\gamma}}^{\beta}\big)\\
\notag &=u_{\bar{\gamma}\bar{\rho}\alpha}-u_{\bar{\rho}\bar{\gamma}\alpha}+u_{\beta}R_{\alpha\ \bar{\gamma}\bar{\rho}}^{\ \beta}+\frac{1}{4}Q_{\alpha\beta}^{\bar{\mu}}Q_{\bar{\gamma}\bar{\beta}}^{\rho}u_{\bar{\mu}}=u_{\bar{\gamma}\bar{\rho}\alpha}-u_{\bar{\rho}\bar{\gamma}\alpha}+u_{\beta}R_{\alpha\ \bar{\gamma}\bar{\rho}}^{\ \beta}-\frac{1}{4}Q_{\alpha\beta}^{\bar{\mu}}Q_{\bar{\rho}\bar{\beta}}^{\gamma}u_{\bar{\mu}},
\end{align}
by \eqref{eq2.9}. This together with $R_{\alpha\ \bar{\gamma}\bar{\rho}}^{\ \beta}=2i(A_{\bar{\gamma}}^{\beta}h_{\alpha\bar{\rho}}-A_{\bar{\rho}}^{\beta}h_{\alpha\bar{\gamma}})-\frac{i}{2}h^{\beta\bar{\sigma}}h_{\lambda\bar{\gamma}}Q_{\bar{\rho}\bar{\sigma},\alpha}^{\lambda}$
(cf. (43) in \cite{BD1}) implies the second identity in \eqref{OCF}.
\end{proof}
\begin{rem}
Note that when $Q\equiv0$, the commutative formulae \eqref{OCF} is the same as Proposition 9.2 in \cite{BD1} in pseudohermitian case.
\end{rem}

\section{The Bochner-type formula}
By definition, we have $(\nabla_{H}u)^{\alpha}=h(\nabla_{H}u,W_{\bar{\alpha}})=W_{\bar{\alpha}}u=u_{\bar{\alpha}}$ for any $u{\in}C_0^{\infty}(M)$. Thus $\nabla_{H}u=(\nabla_{H}u)^{\alpha}W_{\alpha}+(\nabla_{H}u)^{\bar{\alpha}}W_{\bar{\alpha}}
=u_{\bar{\alpha}}W_{\alpha}+u_{\alpha}W_{\bar{\alpha}}$. Consequently, we have $\partial_{b}u=u_{\bar{\alpha}}W_{\alpha}$ and $\|\partial_{b}u\|^2=\|u_{\bar{\lambda}}W_{\lambda}\|^2=u_{\lambda}u_{\bar{\lambda}}$.
\begin{thm}\label{thm1.1}
Under an orthonormal $T^{(1,0)}M$-frame, the Bochner-type formula holds in the following form:
\begin{align}\label{eq1.1}
\notag \Delta_{b}(\|\partial_{b}u\|^2)&=2\big(u_{\alpha\lambda}u_{\bar{\alpha}\bar{\lambda}}+u_{\alpha\bar{\lambda}}u_{\bar{\alpha}\lambda}\big)+4i(u_{\alpha}u_{0\bar{\alpha}}-u_{\bar{\alpha}}u_{0\alpha})\\
\notag &\ \ \ +2ni(A_{\bar{\alpha}\bar{\beta}}u_{\alpha}u_{\beta}-A_{\alpha\beta}u_{\bar{\alpha}}u_{\bar{\beta}})+2R_{\alpha\bar{\beta}}u_{\bar{\alpha}}u_{\beta}
+u_{\alpha}(\Delta_bu)_{\bar{\alpha}}+u_{\bar{\alpha}}(\Delta_bu)_{\alpha}\\
&\ \ \ +i\bigg(Q_{\bar{\alpha}\bar{\beta},\gamma}^{\gamma}u_{\alpha}u_{\beta}-Q_{\alpha\beta,\bar{\gamma}}^{\bar{\gamma}}u_{\bar{\alpha}}u_{\bar{\beta}}\bigg)
-\frac{1}{2}Q_{\alpha\gamma}^{\bar{\rho}}Q_{\bar{\beta}\bar{\gamma}}^{\rho}u_{\bar{\alpha}}u_{\beta}.
\end{align}
\end{thm}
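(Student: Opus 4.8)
The plan is to apply the sub-Laplacian to the scalar $\|\partial_b u\|^2 = u_\lambda u_{\bar\lambda}$ and to reduce everything, via the commutation formulae of Section 3, to $(\Delta_b u)_\lambda$ plus curvature, torsion and Tanno contributions. First I would expand by the Leibniz rule for the TWT connection (legitimate since $\nabla h=0$): writing $\Delta_b f = f_{\alpha\bar\alpha}+f_{\bar\alpha\alpha}$ as in \eqref{DSL} with $f=u_\lambda u_{\bar\lambda}$, the contracted second covariant derivatives of the product split into three groups. The pure Hessian group comes out directly as $2(u_{\alpha\lambda}u_{\bar\alpha\bar\lambda}+u_{\alpha\bar\lambda}u_{\bar\alpha\lambda})$, which is already the first term of \eqref{eq1.1} and is carried along untouched. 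What remains is the third-order cluster $P:=(u_{\alpha\bar\alpha\lambda}+u_{\bar\alpha\alpha\lambda})u_{\bar\lambda}$ together with its complex conjugate $\bar P=u_\lambda(u_{\alpha\bar\alpha\bar\lambda}+u_{\bar\alpha\alpha\bar\lambda})$; since $u$ is real, it suffices to treat $P$ and conjugate the result.

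The heart of the argument is the reduction of $u_{\alpha\bar\alpha\lambda}+u_{\bar\alpha\alpha\lambda}$ to $(\Delta_b u)_\lambda$. I would bring the free index $\lambda$ into the first slot: for $u_{\bar\alpha\alpha\lambda}$ apply the inner formula \eqref{ICF} to swap the last two indices and then the outer formula \eqref{OCF} to move $\lambda$ forward, and for $u_{\alpha\bar\alpha\lambda}$ use the reversed first identity of \eqref{OCF}. Combining these with Corollary \ref{cor3.2}, in the form $2\sum_\alpha u_{\lambda\bar\alpha\alpha}=(\Delta_b u)_\lambda-2ni\,u_{\lambda0}$ (again via \eqref{ICF}), produces $u_{\alpha\bar\alpha\lambda}+u_{\bar\alpha\alpha\lambda}=(\Delta_b u)_\lambda-4i\,u_{0\lambda}-2ni\,A_\lambda^{\bar\beta}u_{\bar\beta}+\sum_\alpha R_{\lambda\ \alpha\bar\alpha}^{\ \beta}u_\beta+\big(i\sum_\alpha Q_{\alpha\lambda,\bar\alpha}^{\bar\beta}-\tfrac i2\sum_\alpha Q_{\lambda\alpha,\bar\alpha}^{\bar\beta}\big)u_{\bar\beta}$, where I have used $\sum_\alpha h_{\alpha\bar\alpha}=n$ and converted the Reeb term $u_{\lambda0}$ to $u_{0\lambda}$ by the third identity of \eqref{SDCF}, which is precisely what generates the torsion term $-2ni\,A_\lambda^{\bar\beta}u_{\bar\beta}$. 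Contracting with $u_{\bar\lambda}$ and adding $\bar P$ then reproduces $u_{\bar\alpha}(\Delta_b u)_\alpha+u_\alpha(\Delta_b u)_{\bar\alpha}$, the Reeb terms $4i(u_\alpha u_{0\bar\alpha}-u_{\bar\alpha}u_{0\alpha})$ and the torsion terms $2ni(A_{\bar\alpha\bar\beta}u_\alpha u_\beta-A_{\alpha\beta}u_{\bar\alpha}u_{\bar\beta})$ of \eqref{eq1.1}.

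Next I would dispose of the two leftover contractions. For the Tanno-derivative piece, the antisymmetry $Q_{\alpha\lambda,\bar\rho}^{\bar\beta}=-Q_{\beta\lambda,\bar\rho}^{\bar\alpha}$ of Proposition \ref{pro2.1} turns $i\sum_\alpha Q_{\alpha\lambda,\bar\alpha}^{\bar\beta}u_{\bar\beta}u_{\bar\lambda}$ into the divergence $-iQ_{\alpha\beta,\bar\gamma}^{\bar\gamma}u_{\bar\alpha}u_{\bar\beta}$, while $\tfrac i2\sum_\alpha Q_{\lambda\alpha,\bar\alpha}^{\bar\beta}u_{\bar\beta}u_{\bar\lambda}$ is antisymmetric in $(\lambda,\beta)$ and therefore annihilated against the symmetric factor $u_{\bar\beta}u_{\bar\lambda}$; with $\bar P$ this yields the $Q_1$-type term $i(Q_{\bar\alpha\bar\beta,\gamma}^\gamma u_\alpha u_\beta-Q_{\alpha\beta,\bar\gamma}^{\bar\gamma}u_{\bar\alpha}u_{\bar\beta})$ of \eqref{eq1.1} and \eqref{GQ}. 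For the curvature piece I would contract $\sum_\alpha R_{\lambda\ \alpha\bar\alpha}^{\ \beta}$ using the symmetries \eqref{COMR} of Proposition \ref{pro2.5} together with the explicit curvature formula \eqref{eq2.8a}: the products of the non-vanishing coefficients $\Gamma_{\alpha\beta}^{\bar\gamma}=-\tfrac i2 Q_{\beta\alpha}^{\bar\gamma}$ occurring there are exactly what makes this contracted curvature equal the Ricci tensor $R_{\lambda\bar\beta}$ only up to a quadratic Tanno correction, and doubling with the conjugate cluster produces both $2R_{\alpha\bar\beta}u_{\bar\alpha}u_\beta$ and the final term $-\tfrac12 Q_{\alpha\gamma}^{\bar\rho}Q_{\bar\beta\bar\gamma}^{\rho}u_{\bar\alpha}u_\beta$. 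Assembling the Hessian group with the $(\Delta_b u)$, Reeb, torsion, Ricci and Tanno contributions gives \eqref{eq1.1}.

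The main obstacle is the systematic control of the extra Tanno terms, which have no pseudohermitian analogue and enter both explicitly, through the $Q$-dependent parts of the outer commutation formulae \eqref{OCF} and the nonvanishing coefficient $\Gamma_{\alpha\beta}^{\bar\gamma}=-\tfrac i2 Q_{\beta\alpha}^{\bar\gamma}$ hidden in \eqref{SD} and \eqref{TD}, and implicitly, through the fact that the contracted curvature of the TWT connection reduces to the Ricci tensor only modulo a quadratic $Q$ term. Forcing these contractions to collapse exactly onto the invariants $Q_1,Q_2$ of \eqref{GQ} and the torsion \eqref{tor1}, with the correct numerical coefficients ($4i$ on the Reeb terms from the $\sum_\alpha h_{\alpha\bar\alpha}=n$ bookkeeping, $2ni$ on the torsion from \eqref{SDCF}, and $-\tfrac12$ on $Q_2$ from the curvature reduction), is where essentially all the work lies; when $Q\equiv0$ every one of these extra terms disappears and the argument reverts to the classical pseudohermitian Bochner computation.
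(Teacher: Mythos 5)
Your overall architecture is the same as the paper's: the Leibniz expansion of $\Delta_b(u_\lambda u_{\bar\lambda})$ into the Hessian group $S_1=2(u_{\alpha\lambda}u_{\bar\alpha\bar\lambda}+u_{\alpha\bar\lambda}u_{\bar\alpha\lambda})$ plus the third--order cluster (this is exactly Lemma \ref{lem4.1}), followed by the inner and outer commutation formulae and Corollary \ref{cor3.2} to surface $(\Delta_bu)_\lambda$, the Reeb terms, the torsion and the Ricci tensor. Up to that point your bookkeeping (including the conversion $u_{\lambda0}\to u_{0\lambda}+A_\lambda^{\bar\beta}u_{\bar\beta}$ generating the $2ni$ torsion coefficient) is sound and matches the paper.

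The genuine gap is your account of the term $-\tfrac12 Q_{\alpha\gamma}^{\bar\rho}Q_{\bar\beta\bar\gamma}^{\rho}u_{\bar\alpha}u_\beta$. You attribute it to the contraction $\sum_\alpha R_{\lambda\ \alpha\bar\alpha}^{\ \beta}$ equalling the Ricci tensor ``only up to a quadratic Tanno correction.'' But identity \eqref{eq4.7}, which uses nothing beyond the symmetries of Proposition \ref{pro2.5} (all of which hold exactly on a contact Riemannian manifold), gives $R_{\alpha\ \lambda\bar\alpha}^{\ \beta}=R_{\lambda\bar\beta}$ with no correction whatsoever; the same symmetries identify your $\sum_\alpha R_{\lambda\ \alpha\bar\alpha}^{\ \beta}$ with $R_{\lambda\bar\beta}$ exactly. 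In the paper the quadratic Tanno term enters from an entirely different place: the \emph{second} outer commutation formula in \eqref{OCF}, relating $u_{\bar\rho\bar\gamma\alpha}$ and $u_{\bar\gamma\bar\rho\alpha}$, whose derivation produces the explicit summand $-\tfrac14 Q_{\alpha\beta}^{\bar\mu}Q_{\bar\rho\bar\beta}^{\gamma}u_{\bar\mu}$ (from the product of two coefficients of type $\Gamma_{\alpha\beta}^{\bar\gamma}=-\tfrac i2Q_{\beta\alpha}^{\bar\gamma}$ together with the curvature identity for $R_{\alpha\ \bar\gamma\bar\rho}^{\ \beta}$). The paper reaches $S_2$ by routing $u_{\alpha\bar\alpha\lambda}$ through $u_{\alpha\lambda\bar\alpha}$ and then applying precisely that barred--barred formula (see \eqref{eq4.8}), which is how the two $-\tfrac14QQ$ contributions combine into $-\tfrac12 Q_2$. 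Your reduction only ever invokes the first (mixed) outer commutation formula and its reversal, in which no quadratic $Q$ term occurs; followed literally, your scheme yields a Bochner formula with the correct Ricci, torsion, Reeb and $\nabla Q$ terms but with the $Q_2$ term missing, and the mechanism you propose for recovering it (a correction in the curvature contraction) does not exist. To close the gap you must pass through the second identity of \eqref{OCF} (or reprove its quadratic $Q$ content by hand) when moving the barred index of $u_{\alpha\lambda\bar\alpha}$ to the front.
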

To prove it, we need a lemma.
\begin{lem}\label{lem4.1}
For any $u{\in}C_0^{\infty}(M)$,
\begin{equation}\label{eq4.2}
\Delta_{b}(\|\partial_{b}u\|^2)=S_1+S_2,
\end{equation}
where
$S_1=2\big(u_{\alpha\lambda}u_{\bar{\alpha}\bar{\lambda}}+u_{\alpha\bar{\lambda}}u_{\bar{\alpha}\lambda}\big)
$, $S_2=u_{\bar{\lambda}}u_{\alpha\bar{\alpha}\lambda}+u_{\lambda}u_{\alpha\bar{\alpha}\bar{\lambda}}
+u_{\lambda}u_{\bar{\alpha}\alpha\bar{\lambda}}+u_{\bar{\lambda}}u_{\bar{\alpha}\alpha\lambda}.$
\end{lem}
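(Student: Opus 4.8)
The plan is to treat $\|\partial_{b}u\|^2=u_{\lambda}u_{\bar{\lambda}}$ as a contraction of first covariant derivatives of $u$ and to differentiate it twice, exploiting the fact that covariant differentiation commutes with metric contraction (since $\nabla h=0$ and, in the chosen orthonormal $T^{(1,0)}M$-frame, $h_{\alpha\bar{\beta}}=\delta_{\alpha\bar{\beta}}$ is covariantly constant, so the summation over $\lambda$ passes through $\nabla$). First I would record, from $\partial_{b}u=u_{\bar{\lambda}}W_{\lambda}$, that $\|\partial_{b}u\|^2=\sum_{\lambda}u_{\lambda}u_{\bar{\lambda}}$, and then compute the first covariant derivative of this scalar by the Leibniz rule, namely $(\|\partial_{b}u\|^2)_{j}=u_{j\lambda}u_{\bar{\lambda}}+u_{\lambda}u_{j\bar{\lambda}}$. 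Here I read off $(u_{\lambda})_{,j}=u_{j\lambda}$ directly from the paper's convention $u_{jk}=\nabla^2u(W_j,W_k)=W_j(u_k)-\Gamma_{jk}^{l}u_l$, in which the first slot is the differentiation direction.

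Next I would apply the covariant derivative once more. Differentiating the covector $(\|\partial_{b}u\|^2)_{j}$ in the direction $W_k$ and using the Leibniz rule on each of the two products yields
\[
(\|\partial_{b}u\|^2)_{kj}=u_{kj\lambda}u_{\bar{\lambda}}+u_{j\lambda}u_{k\bar{\lambda}}+u_{k\lambda}u_{j\bar{\lambda}}+u_{\lambda}u_{kj\bar{\lambda}},
\]
where the third-order terms are assembled according to $\nabla^3u(W_k,W_j,W_\lambda)=u_{kj\lambda}$, the most recent differentiation index sitting in front. The structural point I would stress is that all Christoffel corrections reorganize into honest tensorial third covariant derivatives precisely because $\nabla$ is a derivation commuting with the contraction over $\lambda$; consequently no spurious connection coefficients survive, and at this stage no commutation formula, torsion, or curvature term is required.

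Finally I would specialize the pair $(k,j)$ to $(\alpha,\bar{\alpha})$ and to $(\bar{\alpha},\alpha)$, add the two results, and invoke $\Delta_{b}(\|\partial_{b}u\|^2)=(\|\partial_{b}u\|^2)_{\alpha\bar{\alpha}}+(\|\partial_{b}u\|^2)_{\bar{\alpha}\alpha}$ from \eqref{DSL}. The terms carrying two second-order derivatives combine, after commuting the scalar factors, into $2\big(u_{\alpha\lambda}u_{\bar{\alpha}\bar{\lambda}}+u_{\alpha\bar{\lambda}}u_{\bar{\alpha}\lambda}\big)=S_1$, while the terms pairing a first-order with a third-order derivative combine into $u_{\bar{\lambda}}u_{\alpha\bar{\alpha}\lambda}+u_{\lambda}u_{\alpha\bar{\alpha}\bar{\lambda}}+u_{\lambda}u_{\bar{\alpha}\alpha\bar{\lambda}}+u_{\bar{\lambda}}u_{\bar{\alpha}\alpha\lambda}=S_2$, which is exactly \eqref{eq4.2}.

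I expect the only genuine difficulty to be bookkeeping: keeping straight the ordering convention (first slot $=$ newest differentiation) when promoting $u_{j\lambda}$ to $u_{kj\lambda}$, and confirming that the Leibniz/contraction interchange leaves no residual $\Gamma$-terms. The heavier machinery—the inner and outer commutation formulae \eqref{ICF}, \eqref{OCF} and Corollary \ref{cor3.2}—is not needed for this lemma; it enters only afterwards, when $S_2$ is rewritten to reach the final Bochner-type formula \eqref{eq1.1}.
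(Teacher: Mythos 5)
Your proposal is correct and, at bottom, is the same computation as the paper's: the paper proves \eqref{eq4.2} by writing out $W_{\alpha}(u_{\lambda}u_{\bar{\lambda}})$ and then $(\|\partial_{b}u\|^2)_{\alpha\bar{\alpha}}$ with all connection coefficients displayed and checking that the $\Gamma$-terms cancel in pairs via \eqref{eq2.9}, which is exactly the cancellation you assert. The one place where your justification is thinner than it needs to be in this setting is the claim that ``the summation over $\lambda$ passes through $\nabla$'' merely because $h_{\alpha\bar{\beta}}=\delta_{\alpha\bar{\beta}}$ is parallel: the contraction in $u_{\lambda}u_{\bar{\lambda}}$ runs over holomorphic indices only, and the TWT connection does \emph{not} preserve $T^{(1,0)}M$ here (this is the whole novelty of the contact Riemannian case, $\Gamma_{\alpha\beta}^{\bar{\gamma}}=-\tfrac{i}{2}Q_{\beta\alpha}^{\bar{\gamma}}\neq0$), so a purely $(1,0)$--$(0,1)$ contraction is not a priori parallel. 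What saves the argument is either the observation that $u_{\lambda}u_{\bar{\lambda}}=\tfrac12 h^{ab}u_au_b$ is the \emph{full} horizontal contraction (using $h_{\alpha\beta}=0$), and $HM$ with its metric is parallel; or, equivalently, that the residual terms of type $\Gamma_{k\lambda}^{\bar{\mu}}\,(u_{j\bar{\mu}}u_{\bar{\lambda}}+u_{\bar{\mu}}u_{j\bar{\lambda}})$ vanish because $\Gamma_{k\lambda}^{\bar{\mu}}=-\Gamma_{k\mu}^{\bar{\lambda}}$ from \eqref{eq2.9} is antisymmetric in $\lambda,\mu$ while the factor it multiplies is symmetric. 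With that one line added, your derivation of $(\|\partial_{b}u\|^2)_{kj}=u_{kj\lambda}u_{\bar{\lambda}}+u_{j\lambda}u_{k\bar{\lambda}}+u_{k\lambda}u_{j\bar{\lambda}}+u_{\lambda}u_{kj\bar{\lambda}}$ is complete, and specializing to $(k,j)=(\alpha,\bar{\alpha})$ and $(\bar{\alpha},\alpha)$ and summing gives \eqref{eq4.2} exactly as in the paper; you are also right that no commutation formulae are needed at this stage.
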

\begin{proof}
We claim that
\begin{equation}\label{eq4.1} \big(\|\partial_{b}u\|^2\big)_{\alpha\bar{\alpha}}=u_{\alpha\lambda}u_{\bar{\alpha}\bar{\lambda}}+u_{\alpha\bar{\lambda}}u_{\bar{\alpha}\lambda}
+u_{\bar{\lambda}}u_{\alpha\bar{\alpha}\lambda}+u_{\lambda}u_{\alpha\bar{\alpha}\bar{\lambda}}.
\end{equation}
Then \eqref{eq4.2} follows directly by taking summation of \eqref{eq4.1} and its conjugation.

By \eqref{eq2.9}, we have $$\Gamma_{\alpha\lambda}^{\beta}u_{\beta}u_{\bar{\lambda}}+\Gamma_{\alpha\bar{\lambda}}^{\bar{\beta}}u_{\lambda}u_{\bar{\beta}}
=-\Gamma_{\alpha\bar{\beta}}^{\bar{\lambda}}u_{\beta}u_{\bar{\lambda}}+\Gamma_{\alpha\bar{\lambda}}^{\bar{\beta}}u_{\lambda}u_{\bar{\beta}}=0,$$
and $\Gamma_{\alpha\lambda}^{\bar{\beta}}u_{\bar{\beta}}u_{\bar{\lambda}}=-\Gamma_{\alpha\beta}^{\bar{\lambda}}u_{\bar{\beta}}u_{\bar{\lambda}}
=-\Gamma_{\alpha\lambda}^{\bar{\beta}}u_{\bar{\beta}}u_{\bar{\lambda}}$.
Thus, $\Gamma_{\alpha\lambda}^{\bar{\beta}}u_{\bar{\beta}}u_{\bar{\lambda}}=0.$
So by \eqref{SD}, we get
\begin{align}\label{eq4.1a}
  \notag W_{\alpha}\big(\|\partial_{b}u\|^2\big)&=W_{\alpha}(u_{\lambda})u_{\bar{\lambda}}+u_{\lambda}W_{\alpha}(u_{\bar{\lambda}})\\
  &=\bigg(u_{\alpha\lambda}+\Gamma_{\alpha\lambda}^{\beta}u_{\beta}+\Gamma_{\alpha\lambda}^{\bar{\beta}}u_{\bar{\beta}}\bigg)u_{\bar{\lambda}}
  +u_{\lambda}\bigg(u_{\alpha\bar{\lambda}}+\Gamma_{\alpha\bar{\lambda}}^{\bar{\beta}}u_{\bar{\beta}}\bigg)
  =u_{\alpha\lambda}u_{\bar{\lambda}}+u_{\lambda}u_{\alpha\bar{\lambda}}.
\end{align}
Then taking conjugation of \eqref{eq4.1a}, we get
$W_{\bar{\alpha}}\big(\|\partial_{b}u\|^2\big)=u_{\bar{\alpha}\bar{\lambda}}u_{\lambda}+u_{\bar{\lambda}}u_{\bar{\alpha}\lambda}.$
So
\begin{align}
  \notag \big(\|\partial_{b}u\|^2\big)_{\alpha\bar{\alpha}}&=W_{\alpha}W_{\bar{\alpha}}(\|\partial_{b}u\|^2)-\Gamma_{\alpha\bar{\alpha}}^{\bar{\gamma}}W_{\bar{\gamma}}(\|\partial_{b}u\|^2)\\
  \notag &=W_{\alpha}\big(u_{\bar{\alpha}\bar{\lambda}}u_{\lambda}+u_{\bar{\lambda}}u_{\bar{\alpha}\lambda}\big)-\Gamma_{\alpha\bar{\alpha}}^{\bar{\gamma}}\big(u_{\bar{\gamma}\bar{\lambda}}u_{\lambda}+u_{\bar{\gamma}\lambda}u_{\bar{\lambda}}\big)\\
  \notag &=W_{\alpha}(u_{\lambda})u_{\bar{\alpha}\bar{\lambda}}+W_{\alpha}(u_{\bar{\lambda}})u_{\bar{\alpha}\lambda}
  +\bigg(W_{\alpha}(u_{\bar{\alpha}\bar{\lambda}})-\Gamma_{\alpha\bar{\alpha}}^{\bar{\gamma}}u_{\bar{\gamma}\bar{\lambda}}\bigg)u_{\lambda}
  +\bigg(W_{\alpha}(u_{\bar{\alpha}\lambda})-\Gamma_{\alpha\bar{\alpha}}^{\bar{\gamma}}u_{\bar{\gamma}\lambda}\bigg)u_{\bar{\lambda}}\\
  \notag &=\bigg(u_{\alpha\lambda}+\Gamma_{\alpha\lambda}^{\gamma}u_{\gamma}+\Gamma_{\alpha\lambda}^{\bar{\gamma}}u_{\bar{\gamma}}\bigg)u_{\bar{\alpha}\bar{\lambda}}
  +\bigg(u_{\alpha\bar{\lambda}}+\Gamma_{\alpha\bar{\lambda}}^{\bar{\gamma}}u_{\bar{\gamma}}\bigg)u_{\bar{\alpha}\lambda}\\
  \notag &\ \ \ +\bigg(u_{\alpha\bar{\alpha}\bar{\lambda}}+\Gamma_{\alpha\bar{\lambda}}^{\bar{\gamma}}u_{\bar{\alpha}\bar{\gamma}}\bigg)u_{\lambda}
  +\bigg(u_{\alpha\bar{\alpha}\lambda}+\Gamma_{\alpha\lambda}^{\gamma}u_{\bar{\alpha}\gamma}+\Gamma_{\alpha\lambda}^{\bar{\gamma}}u_{\bar{\alpha}\bar{\gamma}}\bigg)u_{\bar{\lambda}}\\
  \notag &=u_{\alpha\lambda}u_{\bar{\alpha}\bar{\lambda}}+u_{\alpha\bar{\lambda}}u_{\bar{\alpha}\lambda}
  +u_{\lambda}u_{\alpha\bar{\alpha}\bar{\lambda}}+u_{\bar{\lambda}}u_{\alpha\bar{\alpha}\lambda}\\
  \notag &\ \ \ +\bigg(\Gamma_{\alpha\lambda}^{\gamma}u_{\gamma}u_{\bar{\alpha}\bar{\lambda}}+\Gamma_{\alpha\bar{\lambda}}^{\bar{\gamma}}u_{\lambda}u_{\bar{\alpha}\bar{\gamma}}\bigg)
  +\bigg(\Gamma_{\alpha\lambda}^{\bar{\gamma}}u_{\bar{\gamma}}u_{\bar{\alpha}\bar{\lambda}}+\Gamma_{\alpha\lambda}^{\bar{\gamma}}u_{\bar{\lambda}}u_{\bar{\alpha}\bar{\gamma}}\bigg)
  +\bigg(\Gamma_{\alpha\bar{\lambda}}^{\bar{\gamma}}u_{\bar{\gamma}}u_{\bar{\alpha}\lambda}+\Gamma_{\alpha\lambda}^{\gamma}u_{\bar{\lambda}}u_{\bar{\alpha}\gamma}\bigg),
\end{align}
where we have used \eqref{SD} and \eqref{TD}. The result follows from
$$\Gamma_{\alpha\lambda}^{\gamma}u_{\gamma}u_{\bar{\alpha}\bar{\lambda}}+\Gamma_{\alpha\bar{\lambda}}^{\bar{\gamma}}u_{\lambda}u_{\bar{\alpha}\bar{\gamma}}
=-\Gamma_{\alpha\bar{\gamma}}^{\bar{\lambda}}u_{\gamma}u_{\bar{\alpha}\bar{\lambda}}+\Gamma_{\alpha\bar{\lambda}}^{\bar{\gamma}}u_{\lambda}u_{\bar{\alpha}\bar{\gamma}}=0,$$
and similarly,
$\Gamma_{\alpha\lambda}^{\bar{\gamma}}u_{\bar{\gamma}}u_{\bar{\alpha}\bar{\lambda}}+\Gamma_{\alpha\lambda}^{\bar{\gamma}}u_{\bar{\lambda}}u_{\bar{\alpha}\bar{\gamma}}=0,$
$\Gamma_{\alpha\bar{\lambda}}^{\bar{\gamma}}u_{\bar{\gamma}}u_{\bar{\alpha}\lambda}+\Gamma_{\alpha\lambda}^{\gamma}u_{\bar{\lambda}}u_{\bar{\alpha}\gamma}=0$ by \eqref{eq2.9}.
\end{proof}
\eqref{eq4.2} coincides with (9.34) in \cite{DT1} in the pseudohermitian case.
{\vskip 5mm}
{\it Proof of theorem\ref{thm1.1}.}
Note that by \eqref{eq6.1a}, we have $(\Delta_{b}u)_c=u_{c\alpha\bar{\alpha}}+u_{c\bar{\alpha}\alpha}$. We hope to express third-order covariant derivatives in \eqref{eq4.2} in terms of $(\Delta_{b}u)_c$. To do so, we apply inner and outer commutation formulae to \eqref{eq4.2} to express $u_{\alpha\bar{\alpha}b}$ and $u_{\bar{\alpha}{\alpha}b}$ in terms of $u_{b\alpha\bar{\alpha}}$ and $u_{b\bar{\alpha}{\alpha}}$, respectively. By \eqref{ICF}, we have the following inner commutation formulae.
\begin{equation}
\begin{aligned}
\notag &u_{\alpha\bar{\alpha}\lambda}=u_{\alpha\lambda\bar{\alpha}}-2ih_{\lambda\bar{\alpha}}u_{\alpha0},{\quad}
u_{\alpha\bar{\alpha}\bar{\lambda}}=u_{\alpha\bar{\lambda}\bar{\alpha}},\\
\notag &u_{\bar{\alpha}\alpha\bar{\lambda}}=u_{\bar{\alpha}\bar{\lambda}\alpha}+2ih_{\alpha\bar{\lambda}}u_{\bar{\alpha}0},{\quad}
\notag u_{\bar{\alpha}\alpha\lambda}=u_{\bar{\alpha}\lambda\alpha}.
\end{aligned}
\end{equation}
So $S_2$ in \eqref{eq4.2} becomes
\begin{align}\label{eq4.3}
\notag S_2&=u_{\bar{\lambda}}(u_{\alpha\lambda\bar{\alpha}}-2ih_{\lambda\bar{\alpha}}u_{\alpha0})
+u_{\lambda}u_{\alpha\bar{\lambda}\bar{\alpha}}+u_{\lambda}(u_{\bar{\alpha}\bar{\lambda}\alpha}
+2ih_{\alpha\bar{\lambda}}u_{\bar{\alpha}0})+u_{\bar{\lambda}}u_{\bar{\alpha}\lambda\alpha}\\
&=u_{\bar{\lambda}}u_{\alpha\lambda\bar{\alpha}}+u_{\lambda}u_{\alpha\bar{\lambda}\bar{\alpha}}
+u_{\lambda}u_{\bar{\alpha}\bar{\lambda}\alpha}+u_{\bar{\lambda}}u_{\bar{\alpha}\lambda\alpha}-2iu_{\bar{\alpha}}u_{0\alpha}+2iu_{\alpha}u_{0\bar{\alpha}}
-2iA_{\alpha}^{\bar{\beta}}u_{\bar{\alpha}}u_{\bar{\beta}}+2iA_{\bar{\alpha}}^{\beta}u_{\alpha}u_{\beta},
\end{align}
by $u_{\alpha0}=u_{0\alpha}+A_{\alpha}^{\bar{\beta}}u_{\bar{\beta}}$, $u_{\bar{\alpha}0}=u_{0\bar{\alpha}}+A_{\bar{\alpha}}^{\beta}u_{\beta}$ in \eqref{SDCF}.
The outer commutation formulae \eqref{OCF} for $\rho=\alpha$ can be written as
\begin{equation}\label{eq4.6}
\begin{aligned}
&u_{\bar{\alpha}\lambda\alpha}=u_{\lambda\bar{\alpha}\alpha}-2iu_{0\lambda}+R_{\lambda\bar{\beta}}u_{\beta}
-\frac{i}{2}Q_{\beta\lambda,\bar{\alpha}}^{\bar{\alpha}}u_{\bar{\beta}},\\
&u_{\bar{\alpha}\bar{\lambda}\alpha}=u_{\bar{\lambda}\bar{\alpha}\alpha}+2i(n-1)A_{\bar{\lambda}}^{\beta}u_{\beta}
+\frac{i}{2}Q_{\bar{\lambda}\bar{\beta},\alpha}^{\alpha}u_{\beta}-\frac{1}{4}Q_{\alpha\beta}^{\bar{\mu}}Q_{\bar{\alpha}\bar{\beta}}^{\lambda}u_{\bar{\mu}}.
\end{aligned}
\end{equation}
by $Q_{\alpha\lambda,\bar{\alpha}}^{\bar{\beta}}=-Q_{\beta\lambda,\bar{\alpha}}^{\bar{\alpha}}$ in \eqref{eq2.9} and
\begin{equation}\label{eq4.7}
R_{\alpha\ \lambda\bar{\alpha}}^{\ \beta}=h^{\beta\bar{\mu}}R_{\alpha\bar{\mu}\lambda\bar{\alpha}}=R_{\alpha\bar{\beta}\lambda\bar{\alpha}}
=R_{\bar{\beta}\alpha\bar{\alpha}\lambda}=R_{\bar{\alpha}\alpha\bar{\beta}\lambda}=R_{\alpha\bar{\alpha}\lambda\bar{\beta}}=R_{\lambda\bar{\alpha}\alpha\bar{\beta}}
=R_{\lambda\ \alpha\bar{\beta}}^{\ \alpha}=R_{\lambda\bar{\beta}},
\end{equation}
by using Proposition \ref{pro2.5} repeatedly. Taking conjugation of \eqref{eq4.6} and noting that
$R_{\bar{\lambda}\beta}=R_{\bar{\lambda}\ \bar{\alpha}\beta}^{\ \bar{\alpha}}=R_{\bar{\lambda}\alpha\bar{\alpha}\beta}=R_{\beta\bar{\lambda}}$ by \eqref{eq4.7},
we get
\begin{equation}\label{eq4.8}
\begin{aligned}
&u_{\alpha\bar{\lambda}\bar{\alpha}}=u_{\bar{\lambda}\alpha\bar{\alpha}}+2iu_{0\bar{\lambda}}
+R_{\beta\bar{\lambda}}u_{\bar{\beta}}+\frac{i}{2}Q_{\bar{\beta}\bar{\lambda},\alpha}^{\alpha}u_{\beta},\\ &u_{\alpha\lambda\bar{\alpha}}=u_{\lambda\alpha\bar{\alpha}}-2i(n-1)A_{\lambda}^{\bar{\beta}}u_{\bar{\beta}}-\frac{i}{2}Q_{\lambda\beta,\bar{\alpha}}^{\bar{\alpha}}u_{\bar{\beta}}
-\frac{1}{4}Q_{\alpha\beta}^{\bar{\lambda}}Q_{\bar{\alpha}\bar{\beta}}^{\mu}u_{\mu}.
\end{aligned}
\end{equation}
Substitute \eqref{eq4.6} and \eqref{eq4.8} to \eqref{eq4.3} to get
\begin{align}\label{eq4.10}
\notag S_2&=u_{\bar{\lambda}}u_{\lambda\alpha\bar{\alpha}}-2i(n-1)A_{\lambda}^{\bar{\beta}}u_{\bar{\lambda}}u_{\bar{\beta}}
-\frac{i}{2}Q_{\lambda\beta,\bar{\alpha}}^{\bar{\alpha}}u_{\bar{\lambda}}u_{\bar{\beta}}
-\frac{1}{4}Q_{\alpha\beta}^{\bar{\lambda}}Q_{\bar{\alpha}\bar{\beta}}^{\mu}u_{\mu}u_{\bar{\lambda}}\\
\notag &\ \ \ +u_{\lambda}u_{\bar{\lambda}\alpha\bar{\alpha}}+2iu_{\lambda}u_{0\bar{\lambda}}
+R_{\beta\bar{\lambda}}u_{\lambda}u_{\bar{\beta}}+\frac{i}{2}Q_{\bar{\beta}\bar{\lambda},\alpha}^{\alpha}u_{\lambda}u_{\beta}\\
\notag &\ \ \ +u_{\lambda}u_{\bar{\lambda}\bar{\alpha}\alpha}+2i(n-1)A_{\bar{\lambda}}^{\beta}u_{\lambda}u_{\beta}
+\frac{i}{2}Q_{\bar{\lambda}\bar{\beta},\alpha}^{\alpha}u_{\lambda}u_{\beta}-\frac{1}{4}Q_{\alpha\beta}^{\bar{\mu}}Q_{\bar{\alpha}\bar{\beta}}^{\lambda}u_{\lambda}u_{\bar{\mu}}\\
\notag &\ \ \ +u_{\bar{\lambda}}u_{\lambda\bar{\alpha}\alpha}-2iu_{\bar{\lambda}}u_{0\lambda}+R_{\lambda\bar{\beta}}u_{\beta}u_{\bar{\lambda}}
-\frac{i}{2}Q_{\beta\lambda,\bar{\alpha}}^{\bar{\alpha}}u_{\bar{\beta}}u_{\bar{\lambda}}\\
\notag &\ \ \ -2iu_{\bar{\alpha}}u_{0\alpha}+2iu_{\alpha}u_{0\bar{\alpha}}-2iA_{\alpha}^{\bar{\beta}}u_{\bar{\alpha}}u_{\bar{\beta}}+2iA_{\bar{\alpha}}^{\beta}u_{\alpha}u_{\beta}\\
\notag &=u_{\lambda}(\Delta_{b}u)_{\bar{\lambda}}+u_{\bar{\lambda}}(\Delta_{b}u)_{\lambda}+4i(u_{\alpha}u_{0\bar{\alpha}}-u_{\bar{\alpha}}u_{0\alpha})+2ni(A_{\bar{\alpha}\bar{\beta}}u_{\alpha}u_{\beta}-A_{\alpha\beta}u_{\bar{\alpha}}u_{\bar{\beta}})
+2R_{\alpha\bar{\beta}}u_{\bar{\alpha}}u_{\beta}\\
&\ \ \ +iQ_{\bar{\lambda}\bar{\beta},\alpha}^{\alpha}u_{\lambda}u_{\beta}-iQ_{\lambda\beta,\bar{\alpha}}^{\bar{\alpha}}u_{\bar{\lambda}}u_{\bar{\beta}}
-\frac{1}{2}Q_{\mu\beta}^{\bar{\alpha}}Q_{\bar{\lambda}\bar{\beta}}^{\alpha}u_{\lambda}u_{\bar{\mu}}.
\end{align}
The last identity follows from $u_{\lambda}u_{\bar{\lambda}\alpha\bar{\alpha}}+u_{\lambda}u_{\bar{\lambda}\bar{\alpha}\alpha}=u_{\lambda}(\Delta_{b}u)_{\bar{\lambda}}$ by \eqref{eq6.1a}, $A_{\alpha}^{\bar{\beta}}=h^{\gamma\bar{\beta}}A_{\alpha\gamma}=A_{\alpha\beta}$ and $Q_{\beta\alpha}^{\bar{\gamma}}=-Q_{\gamma\alpha}^{\bar{\beta}}$ in \eqref{eq2.9}.
Substituting \eqref{eq4.10} to \eqref{eq4.2}, we get \eqref{eq1.1}.
\begin{rem}\label{rem4.2}
When $Q\equiv0$, the Bochner-type formula \eqref{eq1.1} is the same as the pseudohermitian case (see (9.36) in \cite{DT1} or Theorem 6 in \cite{LW1}).
\end{rem}
\section{Two useful identities}
We need the following lemma to handle the second bracket in the Bochner type formula \eqref{eq1.1}.
\begin{lem}
For any $u\in{C_0^{\infty}(M)}$, we have
\begin{align}\label{eq5.1}
\notag \int_Mi(u_{0\bar{\alpha}}u_{\alpha}-u_{0\alpha}u_{\bar{\alpha}})dV
=&\frac{1}{n}\int_M\bigg(u_{\bar{\alpha}\beta}u_{\alpha\bar{\beta}}-u_{\alpha\beta}u_{\bar{\alpha}\bar{\beta}}
-R_{\alpha\bar{\beta}}u_{\bar{\alpha}}u_{\beta}-\frac{i}{2}Q_{\bar{\alpha}\bar{\beta},\gamma}^{\gamma}u_{\alpha}u_{\beta}\\
&+\frac{i}{2}Q_{\alpha\beta,\bar{\gamma}}^{\bar{\gamma}}u_{\bar{\alpha}}u_{\bar{\beta}}
-\frac{1}{2}Q_{\alpha\gamma}^{\bar{\rho}}Q_{\bar{\beta}\bar{\gamma}}^{\rho}u_{\bar{\alpha}}u_{\beta}
+Q_{\alpha\gamma}^{\bar{\rho}}Q_{\bar{\beta}\bar{\rho}}^{\gamma}u_{\bar{\alpha}}u_{\beta}\bigg)dV,
\end{align}
and
\begin{equation}\label{eq5.2}
\int_Mi(u_{0\bar{\alpha}}u_{\alpha}-u_{0\alpha}u_{\bar{\alpha}})dV=\int_M\bigg(-\frac{2}{n}\bigg|\sum\limits_{\alpha}u_{\alpha\bar{\alpha}}\bigg|^2+\frac{1}{2n}(\Delta_bu)^2
+iA_{\alpha\beta}u_{\bar{\alpha}}u_{\bar{\beta}}-iA_{\bar{\alpha}\bar{\beta}}u_{\alpha}u_{\beta}\bigg)dV.
\end{equation}
\end{lem}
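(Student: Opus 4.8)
The plan is to establish both identities by integration by parts against the volume form $dV$, using the formal adjoint formulas of Lemma \ref{lem3.1} (equivalently the divergence identity \eqref{eq3.51}) to move $W_\alpha$- and $T$-derivatives, and then invoking the commutation formulas of Section 3 to convert the resulting third-order terms into curvature, torsion and Tanno contributions. Two algebraic inputs drive everything. First, combining $u_{\alpha0}=W_\alpha(u_0)$ from \eqref{SD} with $u_{\alpha0}=u_{0\alpha}+A_\alpha^{\bar\beta}u_{\bar\beta}$ from \eqref{SDCF} gives $u_{0\alpha}=W_\alpha u_0-A_{\alpha\beta}u_{\bar\beta}$. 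Second, the trace relations $u_{\alpha\bar\alpha}+u_{\bar\alpha\alpha}=\Delta_bu$ and $u_{\alpha\bar\alpha}-u_{\bar\alpha\alpha}=2inu_0$ (the latter by summing $u_{\alpha\bar\beta}-u_{\bar\beta\alpha}=2ih_{\alpha\bar\beta}u_0$ in \eqref{SDCF}) yield $\sum_\alpha u_{\alpha\bar\alpha}=\tfrac12\Delta_bu+inu_0$.

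I would prove \eqref{eq5.2} first, as it is the cleaner identity. Substituting $u_{0\alpha}=W_\alpha u_0-A_{\alpha\beta}u_{\bar\beta}$ and its conjugate into the left-hand integral, the torsion pieces immediately give $iA_{\alpha\beta}u_{\bar\alpha}u_{\bar\beta}-iA_{\bar\alpha\bar\beta}u_\alpha u_\beta$. For the gradient pieces $\int_M(W_\alpha u_0)u_{\bar\alpha}\,dV$ I would integrate by parts via \eqref{eq3.51}; using $W_\alpha u_{\bar\alpha}=u_{\alpha\bar\alpha}+\Gamma_{\alpha\bar\alpha}^{\bar\gamma}u_{\bar\gamma}$ and $\Gamma_{\gamma\alpha}^\gamma=-\Gamma_{\gamma\bar\gamma}^{\bar\alpha}$ from \eqref{eq2.9}, the connection corrections cancel and one is left with $-\int_M u_0 u_{\alpha\bar\alpha}\,dV$. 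Assembling this with its conjugate and applying $u_{\bar\alpha\alpha}-u_{\alpha\bar\alpha}=-2inu_0$ reduces the left-hand side to $-2n\int_M u_0^2\,dV$ plus the torsion terms; rewriting $-2nu_0^2=-\tfrac2n|\sum_\alpha u_{\alpha\bar\alpha}|^2+\tfrac1{2n}(\Delta_bu)^2$ through $\sum_\alpha u_{\alpha\bar\alpha}=\tfrac12\Delta_bu+inu_0$ produces \eqref{eq5.2}.

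For \eqref{eq5.1} I would instead start from the Hessian-norm integrals $\int_M u_{\bar\alpha\beta}u_{\alpha\bar\beta}\,dV$ and $\int_M u_{\alpha\beta}u_{\bar\alpha\bar\beta}\,dV$ and integrate by parts once, moving a $W_\alpha$-derivative so as to produce $\int_M u_{\bar\beta}\cdot(\text{third-order})\,dV$ and its conjugate. I would then reduce the third-order factors by the commutation machinery: the inner commutation \eqref{ICF} applied to a mixed pair of Hessian slots contributes the trace $\sum_\alpha h_{\alpha\bar\alpha}=n$, which is exactly the source of the overall factor $n$; the outer commutation \eqref{OCF}/\eqref{eq4.6} supplies the Ricci term $R_{\alpha\bar\beta}u_{\bar\alpha}u_\beta$ (via \eqref{eq4.7} and \eqref{COMR}), the Tanno-derivative terms $Q_{\alpha\beta,\bar\gamma}^{\bar\gamma}$, $Q_{\bar\alpha\bar\beta,\gamma}^{\gamma}$, and torsion terms; and Corollary \ref{cor3.2} collapses the traced third derivatives into $(\Delta_bu)_\beta$. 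The $u_0$-pieces generated this way reassemble, through $u_{\beta0}=u_{0\beta}+A_{\beta\gamma}u_{\bar\gamma}$, into a multiple of the very left-hand integral of \eqref{eq5.1} with coefficient proportional to $n$ (the spurious torsion terms cancelling against those from this reconstruction); solving for that integral yields the $\tfrac1n$. The quadratic Tanno terms $-\tfrac12Q_{\alpha\gamma}^{\bar\rho}Q_{\bar\beta\bar\gamma}^{\rho}+Q_{\alpha\gamma}^{\bar\rho}Q_{\bar\beta\bar\rho}^{\gamma}$ arise from the corrections $\Gamma_{\alpha\beta}^{\bar\gamma}=-\tfrac i2Q_{\beta\alpha}^{\bar\gamma}$ that enter both the commutation formulas and each integration by parts.

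The main obstacle is precisely this bookkeeping of the Tanno-tensor contributions in \eqref{eq5.1}: unlike the pseudohermitian case, every use of \eqref{SD}, \eqref{TD}, \eqref{eq3.51} and the commutation formulas carries a correction proportional to $Q_{\beta\alpha}^{\bar\gamma}=2i\Gamma_{\alpha\beta}^{\bar\gamma}$, and these must be collected with the correct coefficients. In particular the residual $u_0$- and $(\Delta_bu)$-terms weighted by $Q$ do not cancel outright; to dispose of them I expect to substitute the already-proved identity \eqref{eq5.2} into the computation, which is the step where, as announced in the introduction, one identity is used inside the proof of the other. The antisymmetries $Q_{\beta\alpha}^{\bar\gamma}=-Q_{\gamma\alpha}^{\bar\beta}$ and $Q_{\alpha\beta,\bar\rho}^{\bar\gamma}=-Q_{\gamma\beta,\bar\rho}^{\bar\alpha}$ of Proposition \ref{pro2.1}, together with the curvature symmetries \eqref{COMR} and \eqref{eq4.7}, are what make the final collection close up to the stated form.
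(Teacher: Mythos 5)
Your treatment of \eqref{eq5.2} is correct and amounts to a reorganization of the paper's own computation: the paper expands $\int_M(\Delta_bu)^2dV$ and extracts $2ni\int_Mu_0(u_{\alpha\bar{\alpha}}-u_{\bar{\alpha}\alpha})dV$ by integration by parts, whereas you substitute $u_{0\alpha}=W_{\alpha}u_0-A_{\alpha\beta}u_{\bar{\beta}}$ directly and finish with the pointwise identity $\sum_{\alpha}u_{\alpha\bar{\alpha}}=\tfrac{1}{2}\Delta_bu+inu_0$; the cancellation of the connection corrections via $\Gamma_{\beta\alpha}^{\beta}=-\Gamma_{\beta\bar{\beta}}^{\bar{\alpha}}$ is the same mechanism in both versions, and your identity $-2nu_0^2=-\tfrac{2}{n}|\sum_{\alpha}u_{\alpha\bar{\alpha}}|^2+\tfrac{1}{2n}(\Delta_bu)^2$ checks out.

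For \eqref{eq5.1} there is a genuine gap at the step where you dispose of the leftover Tanno terms. The skeleton is right: integrating by parts on $\int_Mu_{\alpha\beta}u_{\bar{\alpha}\bar{\beta}}dV$ and commuting the two horizontal derivatives (the paper does this at the level of $[W_{\bar{\alpha}},W_{\alpha}]$ and \eqref{eq2.8a}; your route through \eqref{OCF} with both free indices traced produces the same $-2in\,u_{0\beta}$ and Ricci terms) leads to \eqref{eq5.6}, whose genuinely new, non-pseudohermitian content is the pair of integrals $\int_M\Gamma_{\alpha\gamma}^{\bar{\beta}}u_{\bar{\alpha}\bar{\gamma}}u_{\bar{\beta}}dV+\int_M\Gamma_{\bar{\alpha}\bar{\gamma}}^{\beta}u_{\alpha\gamma}u_{\beta}dV$, i.e.\ $Q$ contracted against a \emph{second-order} Hessian and a gradient. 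Converting these into the first-order quantities $\nabla Q\cdot u\cdot u$ and $Q\ast Q\cdot u\cdot u$ on the right of \eqref{eq5.1} requires a separate integration-by-parts lemma, namely \eqref{eq5.7}, whose proof hinges on $Q_{\bar{\gamma}\bar{\alpha}}^{\beta}u_{\gamma\beta}=0$ (antisymmetry of $Q$ in \eqref{eq2.9} against the symmetry $u_{\gamma\beta}=u_{\beta\gamma}$) to kill the newly created Hessian term; it is exactly this lemma that produces the combination $-\tfrac{1}{2}Q_{\alpha\gamma}^{\bar{\rho}}Q_{\bar{\beta}\bar{\gamma}}^{\rho}+Q_{\alpha\gamma}^{\bar{\rho}}Q_{\bar{\beta}\bar{\rho}}^{\gamma}$ and the $\nabla Q$ terms in \eqref{eq5.1}. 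Your plan instead is to substitute the already-proved \eqref{eq5.2} to remove the residual $Q$-weighted terms; this cannot work, because \eqref{eq5.2} contains no Tanno tensor at all and hence can neither generate nor absorb the $\nabla Q$ and $Q\ast Q$ contributions. (The ``another identity'' alluded to in the introduction is \eqref{eq5.7}, not \eqref{eq5.2}; the identities \eqref{eq5.1} and \eqref{eq5.2} are only combined later, with weights $C$ and $1-C$, in \eqref{eq6.2}.) Note also that the $\nabla Q$ term coming out of \eqref{OCF} itself integrates to zero against $u_{\bar{\beta}}$ by the antisymmetry $Q_{\alpha\beta,\bar{\rho}}^{\bar{\gamma}}=-Q_{\gamma\beta,\bar{\rho}}^{\bar{\alpha}}$ (the paper's $\Sigma_2=0$), so the $\nabla Q$ terms of \eqref{eq5.1} do not arise where you place them. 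To complete your argument you must state and prove the analogue of \eqref{eq5.7}.
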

\begin{rem}
\eqref{eq5.1} and \eqref{eq5.2} are the same as the corresponding identities in the pseudohermitian case when $Q\equiv0$ (cf. Lemma 9.1 in \cite{DT1} or Lemma 4 and Lemma 5 in \cite{G1}).
\end{rem}
\subsection{The Proof of \eqref{eq5.1}}
By definition we have
\begin{align}\label{eq5.3}
\notag \int_Mu_{\alpha\beta}u_{\bar{\alpha}\bar{\beta}}&dV=\sum\limits_{\alpha,\beta}(u_{\alpha\beta},u_{\alpha\beta})
=\sum\limits_{\alpha,\beta,\gamma}\bigg(u_{\alpha\beta},W_{\alpha}(u_{\beta})-\Gamma_{\alpha\beta}^{\gamma}u_{\gamma}-\Gamma_{\alpha\beta}^{\bar{\gamma}}u_{\bar{\gamma}}\bigg)\\
\notag &=\sum\limits_{\alpha,\beta}\bigg(-W_{\bar{\alpha}}(u_{\alpha\beta})+\Gamma_{\bar{\gamma}\gamma}^{\alpha}u_{\alpha\beta}
-\Gamma_{\bar{\alpha}\bar{\gamma}}^{\bar{\beta}}u_{\alpha\gamma},u_{\beta}\bigg)
-\bigg(\Gamma_{\bar{\alpha}\bar{\gamma}}^{\beta}u_{\alpha\gamma},u_{\bar{\beta}}\bigg)\\ &=\sum\limits_{\alpha,\beta,\gamma}\Bigg(-W_{\bar{\alpha}}\bigg(W_{\alpha}(u_{\beta})-\Gamma_{\alpha\beta}^{\gamma}u_{\gamma}
-\Gamma_{\alpha\beta}^{\bar{\gamma}}u_{\bar{\gamma}}\bigg)
+\Gamma_{\bar{\gamma}\gamma}^{\alpha}u_{\alpha\beta}
-\Gamma_{\bar{\alpha}\bar{\gamma}}^{\bar{\beta}}u_{\alpha\gamma},u_{\beta}\Bigg)
-\bigg(\Gamma_{\bar{\alpha}\bar{\gamma}}^{\beta}u_{\alpha\gamma},u_{\bar{\beta}}\bigg)
\end{align}
by \eqref{SD} and Lemma \ref{lem3.1}. Apply
$[W_{\bar{\alpha}},W_{\alpha}]=\nabla_{W_{\bar{\alpha}}}W_{\alpha}-\nabla_{W_{\alpha}}W_{\bar{\alpha}}-\tau(W_{\bar{\alpha}},W_{\alpha})
=\Gamma_{\bar{\alpha}\alpha}^{\gamma}W_{\gamma}-\Gamma_{\alpha\bar{\alpha}}^{\bar{\gamma}}W_{\bar{\gamma}}-2h(W_{\bar{\alpha}},JW_{\alpha})T
=\Gamma_{\bar{\alpha}\alpha}^{\gamma}W_{\gamma}-\Gamma_{\alpha\bar{\alpha}}^{\bar{\gamma}}W_{\bar{\gamma}}-2i\delta_{\alpha\bar{\alpha}}T$
to \eqref{eq5.3} to get that $\int_Mu_{\alpha\beta}u_{\bar{\alpha}\bar{\beta}}dV$ equals to
\begin{align}\label{eq5.4}
&\ \ \ \notag -\sum\limits_{\alpha,\beta}\big(W_{\alpha}W_{\bar{\alpha}}(u_{\beta}),u_{\beta}\big)+2ni\sum\limits_{\beta}\big(T(u_{\beta}),u_{\beta}\big)
+\sum\limits_{\alpha,\beta,\gamma}\Bigg(-\Gamma_{\bar{\alpha}\alpha}^{\gamma}W_{\gamma}(u_{\beta})+\Gamma_{\alpha\bar{\alpha}}^{\bar{\gamma}}W_{\bar{\gamma}}(u_{\beta})\\
\notag &\ \ \ +W_{\bar{\alpha}}\bigg(\Gamma_{\alpha\beta}^{\gamma}u_{\gamma}+\Gamma_{\alpha\beta}^{\bar{\gamma}}u_{\bar{\gamma}}\bigg) +\Gamma_{\bar{\gamma}\gamma}^{\alpha}u_{\alpha\beta}-\Gamma_{\bar{\alpha}\bar{\gamma}}^{\bar{\beta}}u_{\alpha\gamma},u_{\beta}\Bigg)
-\sum\limits_{\alpha,\beta,\gamma}\big(\Gamma_{\bar{\alpha}\bar{\gamma}}^{\beta}u_{\alpha\gamma},u_{\bar{\beta}}\big)\\
\notag &=-\sum\limits_{\alpha,\beta}\bigg(W_{\alpha}(u_{\bar{\alpha}\beta}+\Gamma_{\bar{\alpha}\beta}^{\gamma}u_{\gamma}),u_{\beta}\bigg)
+2ni\sum\limits_{\beta}\big(T(u_{\beta}),u_{\beta}\big)\\
\notag &\ \ \ +\sum\limits_{\alpha,\beta,\gamma}\bigg(-\Gamma_{\bar{\alpha}\alpha}^{\gamma}W_{\gamma}(u_{\beta})+\Gamma_{\alpha\bar{\alpha}}^{\bar{\gamma}}W_{\bar{\gamma}}(u_{\beta})
+W_{\bar{\alpha}}(\Gamma_{\alpha\beta}^{\gamma})u_{\gamma}+\Gamma_{\alpha\beta}^{\gamma}W_{\bar{\alpha}}(u_{\gamma})
+W_{\bar{\alpha}}(\Gamma_{\alpha\beta}^{\bar{\gamma}})u_{\bar{\gamma}}+\Gamma_{\alpha\beta}^{\bar{\gamma}}W_{\bar{\alpha}}(u_{\bar{\gamma}})\\
\notag &\ \ \ +\Gamma_{\bar{\gamma}\gamma}^{\alpha}u_{\alpha\beta}-\Gamma_{\bar{\alpha}\bar{\gamma}}^{\bar{\beta}}u_{\alpha\gamma},u_{\beta}\bigg)
-\sum\limits_{\alpha,\beta,\gamma}\big(\Gamma_{\bar{\alpha}\bar{\gamma}}^{\beta}u_{\alpha\gamma},u_{\bar{\beta}}\big)\\
\notag &=-\sum\limits_{\alpha,\beta}\big(W_{\alpha}(u_{\bar{\alpha}\beta}),u_{\beta}\big)
-\sum\limits_{\alpha,\beta,\gamma}\big(W_{\alpha}(\Gamma_{\bar{\alpha}\beta}^{\gamma})u_{\gamma},u_{\beta}\big)
-\sum\limits_{\alpha,\beta,\gamma,\rho}\big(\Gamma_{\bar{\alpha}\beta}^{\gamma}u_{\alpha\gamma}+\Gamma_{\bar{\alpha}\beta}^{\gamma}\Gamma_{\alpha\gamma}^{\rho}u_{\rho}+\Gamma_{\bar{\alpha}\beta}^{\gamma}\Gamma_{\alpha\gamma}^{\bar{\rho}}u_{\bar{\rho}},u_{\beta}\big)\\
\notag &\ \ \
+2ni\sum\limits_{\beta}\big(T(u_{\beta}),u_{\beta}\big)
+\sum\limits_{\alpha,\beta,\gamma,\rho}\bigg(-\Gamma_{\bar{\alpha}\alpha}^{\gamma}(u_{\gamma\beta}
+\Gamma_{\gamma\beta}^{\rho}u_{\rho}+\Gamma_{\gamma\beta}^{\bar{\rho}}u_{\bar{\rho}})
+\Gamma_{\alpha\bar{\alpha}}^{\bar{\gamma}}(u_{\bar{\gamma}\beta}+\Gamma_{\bar{\gamma}\beta}^{\rho}u_{\rho})
+W_{\bar{\alpha}}(\Gamma_{\alpha\beta}^{\gamma})u_{\gamma}\\
\notag &\ \ \ +W_{\bar{\alpha}}(\Gamma_{\alpha\beta}^{\bar{\gamma}})u_{\bar{\gamma}}
+\Gamma_{\alpha\beta}^{\gamma}(u_{\bar{\alpha}\gamma}+\Gamma_{\bar{\alpha}\gamma}^{\rho}u_{\rho})
+\Gamma_{\alpha\beta}^{\bar{\gamma}}(u_{\bar{\alpha}\bar{\gamma}}+\Gamma_{\bar{\alpha}\bar{\gamma}}^{\rho}u_{\rho}
+\Gamma_{\bar{\alpha}\bar{\gamma}}^{\bar{\rho}}u_{\bar{\rho}})+\Gamma_{\bar{\gamma}\gamma}^{\alpha}u_{\alpha\beta}
-\Gamma_{\bar{\alpha}\bar{\gamma}}^{\bar{\beta}}u_{\alpha\gamma},u_{\beta}\bigg)\\
\notag &\ \ \ -\sum\limits_{\alpha,\beta,\gamma}\big(\Gamma_{\bar{\alpha}\bar{\gamma}}^{\beta}u_{\alpha\gamma},u_{\bar{\beta}}\big)\\ &=-\sum\limits_{\alpha,\beta}\big(W_{\alpha}(u_{\bar{\alpha}\beta}),u_{\beta}\big)+2ni\sum\limits_{\beta}\big(T(u_{\beta}),u_{\beta}\big)
+\Sigma_1+\Sigma_2+\Sigma_3,
\end{align}
by using \eqref{SD} repeatedly, where $\Sigma_1$, $\Sigma_2$ and $\Sigma_3$ denote the summation of terms of type $({\ast}u_{\rho},u_{\beta})$, $({\ast}u_{\bar{\rho}},u_{\beta})$ and $({\ast}u_{ab},u_c)$ respectively. We see that
\begin{align}\label{eq5.5}
\notag \Sigma_1&=\sum\limits_{\alpha,\beta,\gamma,\rho}\Bigg(\big(W_{\bar{\alpha}}\Gamma_{\alpha\beta}^{\rho}-W_{\alpha}\Gamma_{\bar{\alpha}\beta}^{\rho}
-\Gamma_{\bar{\alpha}\beta}^{\gamma}\Gamma_{\alpha\gamma}^{\rho}+\Gamma_{\alpha\beta}^{\gamma}\Gamma_{\bar{\alpha}\gamma}^{\rho}
-\Gamma_{\bar{\alpha}\alpha}^{\gamma}\Gamma_{\gamma\beta}^{\rho}+\Gamma_{\alpha\bar{\alpha}}^{\bar{\gamma}}\Gamma_{\bar{\gamma}\beta}^{\rho}
+\Gamma_{\alpha\beta}^{\bar{\gamma}}\Gamma_{\bar{\alpha}\bar{\gamma}}^{\rho}\big)u_{\rho},u_{\beta}\Bigg)\\
&=(R_{\beta\ \bar{\alpha}\alpha}^{\ \rho}u_{\rho},u_{\beta})-2ni\sum\limits_{\beta,\rho}\big(\Gamma_{0\beta}^{\rho}u_{\rho},u_{\beta}\big),
\end{align}
by
\begin{align}\label{eq5.5a}
R_{\beta\ \bar{\alpha}\alpha}^{\ \rho}&=W_{\bar{\alpha}}\Gamma_{\alpha\beta}^{\rho}-W_{\alpha}\Gamma_{\bar{\alpha}\beta}^{\rho}-\Gamma_{\bar{\alpha}\alpha}^{\gamma}\Gamma_{\gamma\beta}^{\rho}
+\Gamma_{\alpha\bar{\alpha}}^{\bar{\gamma}}\Gamma_{\bar{\gamma}\beta}^{\rho}-\Gamma_{\bar{\alpha}\beta}^{\gamma}\Gamma_{\alpha\gamma}^{\rho}
+\Gamma_{\alpha\beta}^{\gamma}\Gamma_{\bar{\alpha}\gamma}^{\rho}+\Gamma_{\alpha\beta}^{\bar{\gamma}}\Gamma_{\bar{\alpha}\bar{\gamma}}^{\rho}
+2ni\Gamma_{0\beta}^{\rho},
\end{align}
by \eqref{eq2.8a}, $\Gamma_{\alpha\bar{\beta}}^{\gamma}=0$ in \eqref{QBD} and $J_{\bar{\alpha}\alpha}=\sum\limits_{\alpha}h(W_{\bar{\alpha}},JW_{\alpha})=i\sum\limits_{\alpha}\delta_{\alpha\bar{\alpha}}=ni$;
and
\begin{align}
\Sigma_2&=-\frac{i}{2}\sum\limits_{\alpha,\beta,\gamma,\rho}\bigg((W_{\bar{\alpha}}Q_{\beta\alpha}^{\bar{\rho}}
-\Gamma_{\bar{\alpha}\beta}^{\gamma}Q_{\gamma\alpha}^{\bar{\rho}}
-\Gamma_{\bar{\alpha}\alpha}^{\gamma}Q_{\beta\gamma}^{\bar{\rho}}
+\Gamma_{\bar{\alpha}\bar{\gamma}}^{\bar{\rho}}Q_{\beta\alpha}^{\bar{\gamma}})u_{\bar{\rho}},u_{\beta}\bigg)
=-\frac{i}{2}\sum\limits_{\alpha,\beta,\rho}(Q_{\beta\alpha,\bar{\alpha}}^{\bar{\rho}}u_{\bar{\rho}},u_{\beta})=0,
\end{align}
by
$\sum\limits_{\alpha,\beta,\rho}(Q_{\beta\alpha,\bar{\alpha}}^{\bar{\rho}}u_{\bar{\rho}},u_{\beta})=\int_MQ_{\beta\alpha,\bar{\alpha}}^{\bar{\rho}}u_{\bar{\rho}}u_{\bar{\beta}}dV
=-\int_MQ_{\rho\alpha,\bar{\alpha}}^{\bar{\beta}}u_{\bar{\rho}}u_{\bar{\beta}}dV=-\sum\limits_{\alpha,\beta,\rho}(Q_{\beta\alpha,\bar{\alpha}}^{\bar{\rho}}u_{\bar{\rho}},u_{\beta})$
by \eqref{eq2.9}; and
\begin{align}\label{eq5.5c}
\notag \Sigma_3&=\sum\limits_{\alpha,\beta,\gamma}\bigg(-\Gamma_{\bar{\alpha}\beta}^{\gamma}u_{\alpha\gamma}
-\Gamma_{\bar{\alpha}\alpha}^{\gamma}u_{\gamma\beta}+\Gamma_{\alpha\bar{\alpha}}^{\bar{\gamma}}u_{\bar{\gamma}\beta}
+\Gamma_{\alpha\beta}^{\gamma}u_{\bar{\alpha}\gamma}+\Gamma_{\alpha\beta}^{\bar{\gamma}}u_{\bar{\alpha}\bar{\gamma}}
+\Gamma_{\bar{\gamma}\gamma}^{\alpha}u_{\alpha\beta}-\Gamma_{\bar{\alpha}\bar{\gamma}}^{\bar{\beta}}u_{\alpha\gamma},u_{\beta}\bigg)\\
\notag &\ \ \ -\sum\limits_{\alpha,\beta,\gamma}\big(\Gamma_{\bar{\alpha}\bar{\gamma}}^{\beta}u_{\alpha\gamma},u_{\bar{\beta}}\big)\\
&=\sum\limits_{\alpha,\beta,\gamma}\big(\Gamma_{\alpha\bar{\alpha}}^{\bar{\gamma}}u_{\bar{\gamma}\beta}
+\Gamma_{\alpha\beta}^{\gamma}u_{\bar{\alpha}\gamma},u_{\beta}\big)
+\sum\limits_{\alpha,\beta,\gamma}\big(\Gamma_{\alpha\beta}^{\bar{\gamma}}u_{\bar{\alpha}\bar{\gamma}},u_{\beta}\big)
-\sum\limits_{\alpha,\beta,\gamma}\big(\Gamma_{\bar{\alpha}\bar{\gamma}}^{\beta}u_{\alpha\gamma},u_{\bar{\beta}}\big),
\end{align}
by using \eqref{eq2.9}. We also have
\begin{align}\label{eq5.5d}
\notag -\sum\limits_{\alpha,\beta}\big(W_{\alpha}(u_{\bar{\alpha}\beta}),u_{\beta}\big)
&=-\sum\limits_{\alpha,\beta}\big(u_{\bar{\alpha}\beta},W_{\alpha}^{\ast}u_{\beta}\big)
=\sum\limits_{\alpha,\beta}\bigg(u_{\bar{\alpha}\beta},W_{\bar{\alpha}}(u_{\beta})-\Gamma_{\bar{\gamma}\gamma}^{\alpha}u_{\beta}\bigg)\\
&=\sum\limits_{\alpha,\beta}\big(u_{\bar{\alpha}\beta},u_{\bar{\alpha}\beta}\big)
+\sum\limits_{\alpha,\beta,\gamma}\bigg(u_{\bar{\alpha}\beta},\Gamma_{\bar{\alpha}\beta}^{\gamma}u_{\gamma}
-\Gamma_{\bar{\gamma}\gamma}^{\alpha}u_{\beta}\bigg)
=\sum\limits_{\alpha,\beta}\big(u_{\bar{\alpha}\beta},u_{\bar{\alpha}\beta}\big)+\Sigma_4,
\end{align}
by Lemma \ref{lem3.1} and \eqref{SD}, where $\Sigma_4=\sum\limits_{\alpha,\beta,\gamma}(\Gamma_{\alpha\bar{\gamma}}^{\bar{\beta}}u_{\bar{\alpha}\gamma}
-\Gamma_{\gamma\bar{\gamma}}^{\bar{\alpha}}u_{\bar{\alpha}\beta},u_{\beta})$; and
\begin{equation}\label{eq5.5e}
\sum\limits_{\alpha}\big(T(u_{\alpha}),u_{\alpha}\big)-\sum\limits_{\alpha,\rho}\big(\Gamma_{0\alpha}^{\rho}u_{\rho},u_{\alpha}\big)
=\sum\limits_{\alpha}(u_{0\alpha},u_{\alpha}).
\end{equation}
holds by \eqref{SD}. Note that the first summation in the right hand side of \eqref{eq5.5c} is exactly $-\Sigma_4$ by \eqref{eq2.9}. Now substituting \eqref{eq5.5}-\eqref{eq5.5e} to \eqref{eq5.4}, we get $\int_Mu_{\alpha\beta}u_{\bar{\alpha}\bar{\beta}}dV$ equals to
\begin{align}
\notag \sum\limits_{\alpha,\beta}\big(u_{\bar{\alpha}\beta},u_{\bar{\alpha}\beta}\big)
+2ni\sum\limits_{\alpha}\big(u_{0\alpha},u_{\alpha}\big)-\sum\limits_{\alpha,\beta}\big(R_{\alpha\bar{\beta}}u_{\beta},u_{\alpha}\big) -\sum\limits_{\alpha,\beta,\rho}\big(\Gamma_{\alpha\gamma}^{\bar{\beta}}u_{\bar{\alpha}\bar{\gamma}},u_{\beta}\big)
-\sum\limits_{\alpha,\beta,\rho}\big(\Gamma_{\bar{\alpha}\bar{\gamma}}^{\beta}u_{\alpha\gamma},u_{\bar{\beta}}\big),
\end{align}
by using \eqref{eq2.9} for $\sum\limits_{\alpha,\beta,\gamma}\big(\Gamma_{\alpha\beta}^{\bar{\gamma}}u_{\bar{\alpha}\bar{\gamma}},u_{\beta}\big)
=-\sum\limits_{\alpha,\beta,\rho}\big(\Gamma_{\alpha\gamma}^{\bar{\beta}}u_{\bar{\alpha}\bar{\gamma}},u_{\beta}\big)$ and by \eqref{COMR} and \eqref{eq4.7} for
$R_{\beta\ \bar{\alpha}\alpha}^{\ \rho}=-R_{\beta\ \alpha\bar{\alpha}}^{\ \rho}=-R_{\beta\bar{\rho}\alpha\bar{\alpha}}=-R_{\alpha\bar{\rho}\beta\bar{\alpha}}=-R_{\alpha\ \beta\bar{\alpha}}^{\ \rho}=-R_{\beta\bar{\rho}}$. So we get
\begin{equation}
\notag i\int_Mu_{0\alpha}u_{\bar{\alpha}}dV=\frac{1}{2n}\int_M\bigg(u_{\alpha\beta}u_{\bar{\alpha}\bar{\beta}}-u_{\alpha\bar{\beta}}u_{\bar{\alpha}\beta}
+R_{\alpha\bar{\beta}}u_{\bar{\alpha}}u_{\beta}+\Gamma_{\alpha\gamma}^{\bar{\beta}}u_{\bar{\alpha}\bar{\gamma}}u_{\bar{\beta}}
+\Gamma_{\bar{\alpha}\bar{\gamma}}^{\beta}u_{\alpha\gamma}u_{\beta}\bigg)dV,
\end{equation}
The sum of this identity and its conjugation gives
\begin{align}\label{eq5.6}
\notag &\ \ \ i\int_Mu_{0\bar{\alpha}}u_{\alpha}dV-i\int_Mu_{0\alpha}u_{\bar{\alpha}}dV\\
&=\frac{1}{n}\int_M\bigg(u_{\alpha\bar{\beta}}u_{\bar{\alpha}\beta}-u_{\alpha\beta}u_{\bar{\alpha}\bar{\beta}}
-R_{\alpha\bar{\beta}}u_{\bar{\alpha}}u_{\beta}-\Gamma_{\alpha\gamma}^{\bar{\beta}}u_{\bar{\alpha}\bar{\gamma}}u_{\bar{\beta}}
-\Gamma_{\bar{\alpha}\bar{\gamma}}^{\beta}u_{\alpha\gamma}u_{\beta}\bigg)dV.
\end{align}
The result follows.

Comparing with the pseudohermitian case (cf. (9.37) in \cite{DT1}), the integral formula \eqref{eq5.6} has the extra term $\int_M\bigg(\Gamma_{\alpha\gamma}^{\bar{\beta}}u_{\bar{\alpha}\bar{\gamma}}u_{\bar{\beta}}+\Gamma_{\bar{\alpha}\bar{\gamma}}^{\beta}u_{\alpha\gamma}u_{\beta}\bigg)dV$, which is zero by $\Gamma_{\alpha\gamma}^{\bar{\beta}}=-\frac{i}{2}Q_{\gamma\alpha}^{\bar{\beta}}=0$ in the pseudohermitian case. Note that this extra term is the integral involving second-order covariant derivatives. We can transform them into the integral only involving first-order covariant derivatives via integration by parts in the following lemma.
\begin{lem}
\begin{equation}\label{eq5.7}
\int_M\Gamma_{\bar{\alpha}\bar{\gamma}}^{\beta}u_{\alpha\gamma}u_{\beta}dV
=\int_M\bigg(\frac{i}{2}Q_{\bar{\gamma}\bar{\beta},\alpha}^{\alpha}u_{\gamma}u_{\beta}
+\frac{1}{4}Q_{\alpha\gamma}^{\bar{\rho}}Q_{\bar{\beta}\bar{\gamma}}^{\rho}u_{\bar{\alpha}}u_{\beta}
-\frac{1}{2}Q_{\alpha\gamma}^{\bar{\rho}}Q_{\bar{\beta}\bar{\rho}}^{\gamma}u_{\bar{\alpha}}u_{\beta}\bigg)dV.
\end{equation}
\end{lem}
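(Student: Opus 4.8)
The plan is to reduce the left-hand side, which contains the second-order derivative $u_{\alpha\gamma}$, to an integral involving only $Q$, $\nabla Q$ and first-order derivatives of $u$, by one integration by parts. First I would express the connection coefficient through the Tanno tensor: conjugating $\Gamma_{\alpha\gamma}^{\bar\beta}=-\frac i2 Q_{\gamma\alpha}^{\bar\beta}$ (a relabelling of the identity in \eqref{QBD}) gives $\Gamma_{\bar\alpha\bar\gamma}^\beta=\frac i2 Q_{\bar\gamma\bar\alpha}^\beta$, so that
\[
\int_M\Gamma_{\bar\alpha\bar\gamma}^\beta u_{\alpha\gamma}u_\beta\,dV=\frac i2\int_M Q_{\bar\gamma\bar\alpha}^\beta u_{\alpha\gamma}u_\beta\,dV .
\]

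The delicate point is the choice of which factor to differentiate by parts. A naive attempt that substitutes $u_{\alpha\gamma}=W_\alpha(u_\gamma)-\Gamma_{\alpha\gamma}^\mu u_\mu+\frac i2 Q_{\gamma\alpha}^{\bar\mu}u_{\bar\mu}$ from \eqref{SD} and then moves $W_\alpha$ off $u_\gamma$ merely regenerates the original integral (the $\bar\alpha$ index of $Q$ is again contracted with the differentiation direction), yielding a tautology. To break this I would first use $Q_{\bar\gamma\bar\alpha}^\beta=-Q_{\bar\beta\bar\alpha}^\gamma$, the conjugate of $Q_{\beta\alpha}^{\bar\gamma}=-Q_{\gamma\alpha}^{\bar\beta}$ in \eqref{eq2.9}, to transfer the upper index of $Q$ onto the contraction with the second-order derivative, rewriting the integrand as $-\frac i2 Q_{\bar\beta\bar\alpha}^\gamma u_{\gamma\alpha}u_\beta$ (using $u_{\alpha\gamma}=u_{\gamma\alpha}$). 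Now the free differentiation direction $\gamma$ in $u_{\gamma\alpha}=W_\gamma(u_\alpha)-\Gamma_{\gamma\alpha}^\mu u_\mu+\frac i2 Q_{\alpha\gamma}^{\bar\mu}u_{\bar\mu}$ coincides with the upper index of $Q$, so integrating $W_\gamma$ by parts against $Q_{\bar\beta\bar\alpha}^\gamma u_\beta$ produces a genuine divergence $W_\gamma(Q_{\bar\beta\bar\alpha}^\gamma)$ rather than a copy of the starting integral.

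Next I would assemble the covariant derivative and collect the quadratic terms. Using \eqref{eq3.51} with $d(i_{W_\gamma}dV)=\Gamma_{\mu\gamma}^\mu dV$ (computed as in the proof of Lemma \ref{lem3.1}) to carry out the integration by parts, and combining $W_\gamma(Q_{\bar\beta\bar\alpha}^\gamma)$ with the connection corrections coming from \eqref{SD}, from the formal adjoint, and from the volume term, I would recognize the combination as the component $Q_{\bar\beta\bar\alpha,\gamma}^\gamma$ of $\nabla Q$ via the defining formula \eqref{CQ} (made explicit as in \eqref{eq2.86}); after relabelling this becomes the term $\frac i2 Q_{\bar\gamma\bar\beta,\alpha}^\alpha u_\gamma u_\beta$. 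The remaining contributions are all quadratic in $Q$: they arise from the $\frac i2 Q_{\alpha\gamma}^{\bar\mu}u_{\bar\mu}$ piece of \eqref{SD}, from the term in which $W_\gamma$ lands on $u_\beta$ and is re-expanded by \eqref{SD} (whose $-\frac i2 Q_{\beta\gamma}^{\bar\mu}u_{\bar\mu}$ part survives), and from the several products $\Gamma\cdot Q$ in which the connection coefficient is itself $\pm\frac i2 Q$ by \eqref{QBD}. Simplifying these with the antisymmetry and cyclic relations in \eqref{eq2.9} should reproduce the coefficients $\frac14$ and $-\frac12$ in front of $Q_{\alpha\gamma}^{\bar\rho}Q_{\bar\beta\bar\gamma}^\rho u_{\bar\alpha}u_\beta$ and $Q_{\alpha\gamma}^{\bar\rho}Q_{\bar\beta\bar\rho}^\gamma u_{\bar\alpha}u_\beta$.

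I expect the main obstacle to be precisely this last bookkeeping. Several quadratic-$Q$ integrals appear with different index orderings, and isolating the stated combination requires repeated, careful use of $Q_{\beta\alpha}^{\bar\gamma}=-Q_{\gamma\alpha}^{\bar\beta}$ and $Q_{\alpha\beta}^{\bar\gamma}=Q_{\alpha\gamma}^{\bar\beta}-Q_{\gamma\alpha}^{\bar\beta}$ in \eqref{eq2.9}, together with their conjugates and the symmetry $u_{\alpha\gamma}=u_{\gamma\alpha}$, to merge terms that look distinct. Equally important is keeping track of which second-order-derivative integrals are genuinely independent rather than collapsing back onto the starting one; arranging the integration by parts so that the differentiation direction matches the raised index of $Q$ is exactly what guarantees that the procedure is non-circular, and this is the crux of the argument.
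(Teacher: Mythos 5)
Your plan reproduces the paper's own proof essentially step for step: replace $\Gamma_{\bar{\alpha}\bar{\gamma}}^{\beta}$ by $\tfrac{i}{2}Q_{\bar{\gamma}\bar{\alpha}}^{\beta}$, use $u_{\alpha\gamma}=u_{\gamma\alpha}$ so the expansion \eqref{SD} differentiates in the direction contracted with $Q$, integrate $W_{\gamma}$ by parts with the volume correction from Lemma \ref{lem3.1}, reassemble the covariant derivative $Q_{\bar{\gamma}\bar{\alpha},\gamma}^{\beta}$, and reduce the quadratic-$Q$ remainders with \eqref{eq2.9}. The only point to make explicit is that the reappearing second-order term is annihilated outright by $Q_{\bar{\gamma}\bar{\alpha}}^{\beta}u_{\gamma\beta}=0$ (antisymmetry $Q_{\bar{\gamma}\bar{\alpha}}^{\beta}=-Q_{\bar{\beta}\bar{\alpha}}^{\gamma}$ against the symmetry $u_{\gamma\beta}=u_{\beta\gamma}$), which is precisely the non-circularity mechanism you describe.
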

\begin{proof}
By \eqref{SD}, \eqref{eq3.52} and \eqref{SDCF}, we have
\begin{align}
\notag \int_M\Gamma_{\bar{\alpha}\bar{\gamma}}^{\beta}&u_{\alpha\gamma}u_{\beta}dV=\frac{i}{2}\int_MQ_{\bar{\gamma}\bar{\alpha}}^{\beta}u_{\alpha\gamma}u_{\beta}dV=\frac{i}{2}\int_MQ_{\bar{\gamma}\bar{\alpha}}^{\beta}u_{\gamma\alpha}u_{\beta}dV\\
\notag &=\frac{i}{2}\int_M\bigg(W_{\gamma}u_{\alpha}-\Gamma_{\gamma\alpha}^{\rho}u_{\rho}+\frac{i}{2}Q_{\alpha\gamma}^{\bar{\rho}}u_{\bar{\rho}}\bigg)
Q_{\bar{\gamma}\bar{\alpha}}^{\beta}u_{\beta}dV\\
\notag &=\frac{i}{2}\int_M\bigg(u_{\alpha}(-W_{\gamma}+\Gamma_{\rho\bar{\rho}}^{\bar{\gamma}})(Q_{\bar{\gamma}\bar{\alpha}}^{\beta}u_{\beta})
+\mathscr{E}\bigg)dV\\
\notag &=\frac{i}{2}\int_M\bigg(-W_{\gamma}(Q_{\bar{\gamma}\bar{\alpha}}^{\beta})u_{\alpha}u_{\beta}
-Q_{\bar{\gamma}\bar{\alpha}}^{\beta}u_{\alpha}W_{\gamma}(u_{\beta})
+\Gamma_{\rho\bar{\rho}}^{\bar{\gamma}}Q_{\bar{\gamma}\bar{\alpha}}^{\beta}u_{\alpha}u_{\beta}+\mathscr{E}\bigg)dV\\
\notag &=\frac{i}{2}\int_M\bigg(-W_{\gamma}(Q_{\bar{\gamma}\bar{\alpha}}^{\beta})u_{\alpha}u_{\beta}
-Q_{\bar{\gamma}\bar{\alpha}}^{\beta}u_{\alpha}\bigg(u_{\gamma\beta}+\Gamma_{\gamma\beta}^{\rho}u_{\rho}
-\frac{i}{2}Q_{\beta\gamma}^{\bar{\rho}}u_{\bar{\rho}}\bigg)
+\Gamma_{\rho\bar{\rho}}^{\bar{\gamma}}Q_{\bar{\gamma}\bar{\alpha}}^{\beta}u_{\alpha}u_{\beta}+\mathscr{E}\bigg)dV,
\end{align}
where $\mathscr{E}=-\Gamma_{\gamma\alpha}^{\rho}Q_{\bar{\gamma}\bar{\alpha}}^{\beta}u_{\beta}u_{\rho}
+\frac{i}{2}Q_{\alpha\gamma}^{\bar{\rho}}Q_{\bar{\gamma}\bar{\alpha}}^{\beta}u_{\beta}u_{\bar{\rho}}
=\Gamma_{\gamma\bar{\rho}}^{\bar{\alpha}}Q_{\bar{\gamma}\bar{\alpha}}^{\beta}u_{\beta}u_{\rho}
+\frac{i}{2}Q_{\alpha\gamma}^{\bar{\rho}}Q_{\bar{\gamma}\bar{\alpha}}^{\beta}u_{\beta}u_{\bar{\rho}}$ by \eqref{eq2.9}.
By
$$2Q_{\bar{\gamma}\bar{\alpha}}^{\beta}u_{\gamma\beta}
=Q_{\bar{\gamma}\bar{\alpha}}^{\beta}u_{\gamma\beta}-Q_{\bar{\beta}\bar{\alpha}}^{\gamma}u_{\beta\gamma}=0,$$
by \eqref{eq2.9} and \eqref{SDCF}, it equals to
\begin{align}
\notag &\ \ \ \frac{i}{2}\int_M\bigg(-W_{\gamma}Q_{\bar{\gamma}\bar{\alpha}}^{\beta}
-\Gamma_{\gamma\rho}^{\beta}Q_{\bar{\gamma}\bar{\alpha}}^{\rho}
+\Gamma_{\gamma\bar{\gamma}}^{\bar{\rho}}Q_{\bar{\rho}\bar{\alpha}}^{\beta}
+\Gamma_{\gamma\bar{\alpha}}^{\bar{\rho}}Q_{\bar{\gamma}\bar{\rho}}^{\beta}\bigg)u_{\alpha}u_{\beta}dV -\frac{1}{4}\int_M\bigg(Q_{\rho\gamma}^{\bar{\beta}}Q_{\bar{\gamma}\bar{\alpha}}^{\rho}+Q_{\rho\gamma}^{\bar{\beta}}Q_{\bar{\gamma}\bar{\rho}}^{\alpha}\bigg)u_{\alpha}u_{\bar{\beta}}dV\\
\notag &=-\frac{i}{2}\int_MQ_{\bar{\gamma}\bar{\alpha},\gamma}^{\beta}u_{\alpha}u_{\beta}dV
-\frac{1}{4}\int_M\bigg(Q_{\rho\gamma}^{\bar{\beta}}Q_{\bar{\gamma}\bar{\alpha}}^{\rho}+Q_{\rho\gamma}^{\bar{\beta}}Q_{\bar{\gamma}\bar{\rho}}^{\alpha}\bigg)u_{\alpha}u_{\bar{\beta}}dV.
\end{align}
Then \eqref{eq5.7} follows from the last identity, $Q_{\bar{\gamma}\bar{\alpha}}^{\rho}=Q_{\bar{\gamma}\bar{\rho}}^{\alpha}-Q_{\bar{\rho}\bar{\gamma}}^{\alpha}$ and $Q_{\beta\alpha}^{\bar{\gamma}}=-Q_{\gamma\alpha}^{\bar{\beta}}$ in \eqref{eq2.9}.
\end{proof}
Substituting \eqref{eq5.7} and its conjugation to \eqref{eq5.6}, we get \eqref{eq5.1}.
\subsection{The proof of \eqref{eq5.2}}
Note that
\begin{align}\label{eq5.8}
\notag \int_M(\Delta_{b}u)^2dV&=\int_M\bigg(\sum\limits_{\alpha}u_{\alpha\bar{\alpha}}+u_{\bar{\alpha}\alpha}\bigg)^2dV
=\int_M\sum\limits_{\alpha,\beta}(u_{\alpha\bar{\alpha}}u_{\beta\bar{\beta}}+u_{\alpha\bar{\alpha}}u_{\bar{\beta}\beta}+u_{\bar{\alpha}\alpha}u_{\beta\bar{\beta}}+u_{\bar{\alpha}\alpha}u_{\bar{\beta}\beta})dV\\
\notag &=\int_M\bigg(2\bigg|\sum\limits_{\alpha}u_{\alpha\bar{\alpha}}\bigg|^2+\sum\limits_{\alpha,\beta}(u_{\bar{\alpha}\alpha}+2ih_{\alpha\bar{\alpha}}u_0)u_{\beta\bar{\beta}}+\sum\limits_{\alpha,\beta}(u_{\alpha\bar{\alpha}}-2ih_{\alpha\bar{\alpha}}u_0)u_{\bar{\beta}\beta}\bigg)dV\\ &=\int_M\bigg(4\bigg|\sum\limits_{\alpha}u_{\alpha\bar{\alpha}}\bigg|^2+2ni\sum\limits_{\alpha}(u_0u_{\alpha\bar{\alpha}}-u_0u_{\bar{\alpha}\alpha})\bigg)dV.
\end{align}
On the other hand, we have
\begin{align}
\notag \int_M\sum\limits_{\alpha}u_0&(u_{\alpha\bar{\alpha}}-u_{\bar{\alpha}\alpha})dV
=\sum\limits_{\alpha}(u_0,u_{\bar{\alpha}\alpha}-u_{\alpha\bar{\alpha}})
=\sum\limits_{\alpha,\gamma}(u_0,W_{\bar{\alpha}}(u_{\alpha})-\Gamma_{\bar{\alpha}\alpha}^{\gamma}u_{\gamma}
-W_{\alpha}(u_{\bar{\alpha}})+\Gamma_{\alpha\bar{\alpha}}^{\bar{\gamma}}u_{\bar{\gamma}})\\
\notag &=\sum\limits_{\alpha}\big(W_{\bar{\alpha}}^{\ast}u_0,u_{\alpha}\big)-\sum\limits_{\alpha,\gamma}\big(\Gamma_{\alpha\bar{\alpha}}^{\bar{\gamma}}u_0,u_{\gamma}\big)
-\sum\limits_{\alpha}\big(W_{\alpha}^{\ast}u_0,u_{\bar{\alpha}}\big)+\sum\limits_{\alpha,\gamma}\big(\Gamma_{\bar{\alpha}\alpha}^{\gamma}u_0,u_{\bar{\gamma}}\big)\\
\notag &=-\sum\limits_{\alpha}\big(W_{\alpha}(u_0),u_{\alpha}\big)
+\sum\limits_{\alpha,\gamma}\big(\Gamma_{\gamma\bar{\gamma}}^{\bar{\alpha}}u_0,u_{\alpha}\big)
-\big(\Gamma_{\alpha\bar{\alpha}}^{\bar{\gamma}}u_0,u_{\gamma}\big)\\
\notag &\ \ \ +\sum\limits_{\alpha}\big(W_{\bar{\alpha}}(u_0),u_{\bar{\alpha}}\big)
-\sum\limits_{\alpha,\gamma}\big(\Gamma_{\bar{\gamma}\gamma}^{\alpha}u_0,u_{\bar{\alpha}}\big)
+\big(\Gamma_{\bar{\alpha}\alpha}^{\gamma}u_0,u_{\bar{\gamma}}\big)\\
\notag &=-\sum\limits_{\alpha}\big(W_{\alpha}(u_0),u_{\alpha}\big)+\sum\limits_{\alpha}\big(W_{\bar{\alpha}}(u_0),u_{\bar{\alpha}}\big)
=-\sum\limits_{\alpha}\big(u_{\alpha0},u_{\alpha}\big)+\sum\limits_{\alpha}\big(u_{\bar{\alpha}0},u_{\bar{\alpha}}\big)\\
\notag &=-\sum\limits_{\alpha,\beta}\big(u_{0\alpha}+A_{\alpha\beta}u_{\bar{\beta}},u_{\alpha}\big)+\sum\limits_{\alpha,\beta}\big(u_{0\bar{\alpha}}+A_{\bar{\alpha}\bar{\beta}}u_{\beta},u_{\bar{\alpha}}\big)\\
\notag &=\int_M\sum\limits_{\alpha}\big(u_{\alpha}u_{0\bar{\alpha}}-u_{\bar{\alpha}}u_{0\alpha}\big)dV
+\int_M\sum\limits_{\alpha,\beta}\big(A_{\bar{\alpha}\bar{\beta}}u_{\alpha}u_{\beta}-A_{\alpha\beta}u_{\bar{\alpha}}u_{\bar{\beta}}\big)dV.
\end{align}
Substitute this identity into \eqref{eq5.8} to get
\begin{align}
\notag \int_M\bigg((\Delta_{b}u)^2-4|\sum\limits_{\alpha}u_{\alpha\bar{\alpha}}|^2\bigg)dV=&2ni\sum\limits_{\alpha,\beta}\int_M\bigg((u_{\alpha}u_{0\bar{\alpha}}-u_{\bar{\alpha}}u_{0\alpha})
+(A_{\bar{\alpha}\bar{\beta}}u_{\alpha}u_{\beta}-A_{\alpha\beta}u_{\bar{\alpha}}u_{\bar{\beta}})\bigg)dV,
\end{align}
which is equivalent to \eqref{eq5.2}.

\section{The Proof of Theorem \ref{thm1.2}}
\begin{lem}\label{lem6.1}
For any $u\in{C_0^{\infty}(M)}$, we have
\begin{equation}
\notag \int_M\bigg(u_{\alpha}(\Delta_{b}u)_{\bar{\alpha}}+u_{\bar{\alpha}}(\Delta_{b}u)_{\alpha}\bigg)dV=-\int_M(\Delta_{b}u)^2dV.
\end{equation}
\end{lem}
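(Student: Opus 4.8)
The plan is to deduce the claim directly from Corollary \ref{cor3.1}, so that no fresh integration by parts is needed. The first observation is that, since $u$ is a real smooth function, its sub-Laplacian $v:=\Delta_{b}u=u_{\alpha\bar\alpha}+u_{\bar\alpha\alpha}$ is again real: the conjugation rule $\overline{u_{\alpha\bar\alpha}}=u_{\bar\alpha\alpha}$ gives $\overline{\Delta_{b}u}=\Delta_{b}u$, and clearly $v\in C^\infty(M)$. Thus $u\in C_0^\infty(M)$ and $v=\Delta_{b}u\in C^\infty(M)$ meet the hypotheses of Corollary \ref{cor3.1}.

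The main step is to feed $v=\Delta_{b}u$ into Corollary \ref{cor3.1}, which yields
\[
\int_M(\Delta_{b}u)^2\,dV=(\Delta_{b}u,\Delta_{b}u)=-\sum_{\alpha}\Big((u_{\alpha},(\Delta_{b}u)_{\alpha})+(u_{\bar\alpha},(\Delta_{b}u)_{\bar\alpha})\Big),
\]
where the left-hand side uses $(\Delta_{b}u,\Delta_{b}u)=\int_M(\Delta_{b}u)\overline{\Delta_{b}u}\,dV=\int_M(\Delta_{b}u)^2\,dV$ by reality. It then remains to turn the two Hermitian inner products into the real integrals of the statement. Using again that $u$ and $\Delta_{b}u$ are real, one has $\overline{(\Delta_{b}u)_{\alpha}}=\overline{W_{\alpha}\Delta_{b}u}=W_{\bar\alpha}\Delta_{b}u=(\Delta_{b}u)_{\bar\alpha}$ and likewise $\overline{(\Delta_{b}u)_{\bar\alpha}}=(\Delta_{b}u)_{\alpha}$, so that $(u_{\alpha},(\Delta_{b}u)_{\alpha})=\int_M u_{\alpha}(\Delta_{b}u)_{\bar\alpha}\,dV$ and $(u_{\bar\alpha},(\Delta_{b}u)_{\bar\alpha})=\int_M u_{\bar\alpha}(\Delta_{b}u)_{\alpha}\,dV$. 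Substituting these back and rearranging gives precisely the asserted identity.

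I expect no genuine analytic obstacle here: the content is an integration-by-parts formula that is already encapsulated in Corollary \ref{cor3.1}, which in turn rests on the adjoint formulas of Lemma \ref{lem3.1}. The only point needing care is the bookkeeping of complex conjugates, i.e. tracking which indices become barred under conjugation and verifying that the connection-coefficient terms produced when $W_{\alpha},W_{\bar\alpha}$ are moved onto their formal adjoints cancel in pairs; this follows from the relations in \eqref{eq2.9} together with the conjugation reflection of the connection coefficients. Should a self-contained route be preferred, the same identity can be reached by writing $(\Delta_{b}u)_{\bar\alpha}=W_{\bar\alpha}(\Delta_{b}u)$, integrating by parts with $W_{\bar\alpha}^{\ast}$ from Lemma \ref{lem3.1}, and invoking $(\Delta_{b}u)_{\alpha}=u_{\alpha\beta\bar\beta}+u_{\alpha\bar\beta\beta}$ from Corollary \ref{cor3.2}; but passing through Corollary \ref{cor3.1} is the shortest path.
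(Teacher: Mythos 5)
Your proposal is correct and is essentially the paper's argument: both amount to a single integration by parts moving $W_{\alpha}$, $W_{\bar\alpha}$ onto their formal adjoints from Lemma \ref{lem3.1}, with the reality of $u$ and $\Delta_{b}u$ handling the conjugations; you simply invoke Corollary \ref{cor3.1} with $v=\Delta_{b}u$, which already packages that computation, whereas the paper redoes it by hand for $\int_M u_{\bar\alpha}(\Delta_{b}u)_{\alpha}\,dV$ and then adds the conjugate. Your route is marginally more economical but not substantively different.
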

\begin{proof}
By \eqref{eq6.1a}, \eqref{eq3.5a} and Lemma \ref{lem3.1}, we get
\begin{align}
\notag \int_Mu_{\bar{\alpha}}(\Delta_{b}u)_{\alpha}dV&=\int_MW_{\alpha}(\Delta_{b}u)u_{\bar{\alpha}}dV=(W_{\alpha}(\Delta_{b}u),u_{\alpha})
=\Bigg(\Delta_{b}u,\bigg(-W_{\bar{\alpha}}+\Gamma_{\bar{\beta}\beta}^{\alpha}\bigg)u_{\alpha}\Bigg)\\
\notag &=(\Delta_{b}u,-W_{\bar{\alpha}}(u_{\alpha})+\Gamma_{\bar{\alpha}\alpha}^{\beta}u_{\beta})=-(\Delta_{b}u,u_{\bar{\alpha}\alpha}),
\end{align}
by using \eqref{SD}. The summation of this identity and its conjugation gives the result.
\end{proof}
Note that
$$\int_M\big(\Delta_{b}f\big)dV=0,$$
holds for any $f\in{C_0^{\infty}}(M)$. Apply $f=\|\partial_{b}u\|^2\in{C_0^{\infty}}(M)$ to this identity for $u\in{C_0^{\infty}}(M)$. By the Bochner-type formula \eqref{eq1.1} and Lemma \ref{lem6.1}, we get
\begin{align}\label{eq6.1}
\notag 0&=\int_M\bigg(u_{\alpha\beta}u_{\bar{\alpha}\bar{\beta}}+u_{\alpha\bar{\beta}}u_{\bar{\alpha}\beta}+2i(u_{\alpha}u_{0\bar{\alpha}}-u_{\bar{\alpha}}u_{0\alpha})
+ni(A_{\bar{\alpha}\bar{\beta}}u_{\alpha}u_{\beta}-A_{\alpha\beta}u_{\bar{\alpha}}u_{\bar{\beta}})+R_{\alpha\bar{\beta}}u_{\bar{\alpha}}u_{\beta}\\ &\ \ \ -\frac{1}{2}(\Delta_{b}u)^2+\frac{i}{2}Q_{\bar{\alpha}\bar{\beta},\gamma}^{\gamma}u_{\alpha}u_{\beta}
-\frac{i}{2}Q_{\alpha\beta,\bar{\gamma}}^{\bar{\gamma}}u_{\bar{\alpha}}u_{\bar{\beta}}
-\frac{1}{4}Q_{\alpha\gamma}^{\bar{\rho}}Q_{\bar{\beta}\bar{\gamma}}^{\rho}u_{\bar{\alpha}}u_{\beta}\bigg)dV.
\end{align}

Apply \eqref{eq5.1} and \eqref{eq5.2} to get
\begin{align}\label{eq6.2}
\notag &\ \ \ i\int_M(u_{\alpha}u_{0\bar{\alpha}}-u_{\bar{\alpha}}u_{0\alpha})dV=Ci\int_M(u_{\alpha}u_{0\bar{\alpha}}-u_{\bar{\alpha}}u_{0\alpha})dV+(1-C)i\int_M(u_{\alpha}u_{0\bar{\alpha}}-u_{\bar{\alpha}}u_{0\alpha})dV\\
\notag &=\int_M\bigg(\frac{C}{n}u_{\alpha\bar{\beta}}u_{\bar{\alpha}\beta}-\frac{C}{n}u_{\alpha\beta}u_{\bar{\alpha}\bar{\beta}}-\frac{C}{n}R_{\alpha\bar{\beta}}u_{\bar{\alpha}}u_{\beta}\\
\notag &\ \ \ -\frac{C}{2n}iQ_{\bar{\alpha}\bar{\beta},\gamma}^{\gamma}u_{\alpha}u_{\beta}+\frac{C}{2n}iQ_{\alpha\beta,\bar{\gamma}}^{\bar{\gamma}}u_{\bar{\alpha}}u_{\bar{\beta}}
-\frac{C}{2n}Q_{\alpha\gamma}^{\bar{\rho}}Q_{\bar{\beta}\bar{\gamma}}^{\rho}u_{\bar{\alpha}}u_{\beta}
+\frac{C}{n}Q_{\alpha\gamma}^{\bar{\rho}}Q_{\bar{\beta}\bar{\rho}}^{\gamma}u_{\bar{\alpha}}u_{\beta}\\
&\ \ \ -\frac{2(1-C)}{n}\bigg|\sum\limits_{\alpha}u_{\alpha\bar{\alpha}}\bigg|^2+\frac{1-C}{2n}\big(\Delta_{b}u\big)^2
+(1-C)iA_{\alpha\beta}u_{\bar{\alpha}}u_{\bar{\beta}}-(1-C)iA_{\bar{\alpha}\bar{\beta}}u_{\alpha}u_{\beta}\bigg)dV.
\end{align}
Substituting \eqref{eq6.2} to \eqref{eq6.1}, we get
\begin{align}\label{eq6.3}
\notag 0&=\int_M\Bigg\{\bigg(1+\frac{2C}{n}\bigg)u_{\alpha\bar{\beta}}u_{\bar{\alpha}\beta}+\bigg(1-\frac{2C}{n}\bigg)u_{\alpha\beta}u_{\bar{\alpha}\bar{\beta}}
-\frac{4(1-C)}{n}\bigg|\sum\limits_{\alpha}u_{\alpha\bar{\alpha}}\bigg|^2\\
\notag &\ \ \ +\bigg(-\frac{1}{2}+\frac{1-C}{n}\bigg)(\Delta_{b}u)^2+\bigg(1-\frac{2C}{n}\bigg)R_{\alpha\bar{\beta}}u_{\bar{\alpha}}u_{\beta}
+i\bigg(n-2(1-C)\bigg)\bigg(A_{\bar{\alpha}{\bar{\beta}}}u_{\alpha}u_{\beta}-A_{\alpha\beta}u_{\bar{\alpha}}u_{\bar{\beta}}\bigg)\\
&\ \ \ +i\bigg(\frac{1}{2}-\frac{C}{n}\bigg)\bigg(Q_{\bar{\alpha}\bar{\beta},\gamma}^{\gamma}u_{\alpha}u_{\beta}-Q_{\alpha\beta,\bar{\gamma}}^{\bar{\gamma}}u_{\bar{\alpha}}u_{\bar{\beta}}\bigg)
-\bigg(\frac{1}{4}+\frac{C}{n}\bigg)Q_{\alpha\gamma}^{\bar{\rho}}Q_{\bar{\beta}\bar{\gamma}}^{\rho}u_{\bar{\alpha}}u_{\beta}
+\frac{2C}{n}Q_{\alpha\gamma}^{\bar{\rho}}Q_{\bar{\beta}\bar{\rho}}^{\gamma}u_{\bar{\alpha}}u_{\beta}\Bigg\}dV.
\end{align}
Let $\bigg(1+\frac{2C}{n}\bigg)\frac{1}{n}-\frac{4(1-C)}{n}=0$, i.e., $C=\frac{3n}{4n+2}$ in \eqref{eq6.3}.
By using $u_{\alpha\bar{\beta}}u_{\bar{\alpha}\beta}=\sum\limits_{\alpha,\beta}|u_{\alpha\bar{\beta}}|^2\ge\frac{1}{n}|\sum\limits_{\alpha}u_{\alpha\bar{\alpha}}|^2$ (cf. p. 489 in \cite{LW1}),
we get
\begin{align}\label{eq6.4}
\notag 0&\ge\int_M\frac{2(n-1)}{2n+1}u_{\alpha\beta}u_{\bar{\alpha}\bar{\beta}}-\frac{n^2-1}{n(2n+1)}(\Delta_{b}u)^2
+\frac{2(n-1)}{2n+1}R_{\alpha\bar{\beta}}u_{\bar{\alpha}}u_{\beta}
+i\frac{2n^2-2}{2n+1}\bigg(A_{\bar{\alpha}{\bar{\beta}}}u_{\alpha}u_{\beta}-A_{\alpha\beta}u_{\bar{\alpha}}u_{\bar{\beta}}\bigg)\\
\notag &\ \ \ +\frac{n-1}{2n+1}i\bigg(Q_{\bar{\alpha}\bar{\beta},\gamma}^{\gamma}u_{\alpha}u_{\beta}-Q_{\alpha\beta,\bar{\gamma}}^{\bar{\gamma}}u_{\bar{\alpha}}u_{\bar{\beta}}\bigg)
-\frac{2n+7}{8n+4}Q_{\alpha\gamma}^{\bar{\rho}}Q_{\bar{\beta}\bar{\gamma}}^{\rho}u_{\bar{\alpha}}u_{\beta}
+\frac{3}{2n+1}Q_{\alpha\gamma}^{\bar{\rho}}Q_{\bar{\beta}\bar{\rho}}^{\gamma}u_{\bar{\alpha}}u_{\beta}\bigg\}dV\\
\notag &=\int_M\bigg\{\frac{2(n-1)}{2n+1}u_{\alpha\beta}u_{\bar{\alpha}\bar{\beta}}+\frac{n^2-1}{n(2n+1)}\lambda_1u(\Delta_{b}u)\\
\notag &\ \ \ +\frac{2(n-1)}{2n+1}\bigg(R_{\alpha\bar{\beta}}u_{\bar{\alpha}}u_{\beta}
+i(n+1)\big(A_{\bar{\alpha}{\bar{\beta}}}u_{\alpha}u_{\beta}-A_{\alpha\beta}u_{\bar{\alpha}}u_{\bar{\beta}}\big)\\
&\ \ \ +\frac{i}{2}\big(Q_{\bar{\alpha}\bar{\beta},\gamma}^{\gamma}u_{\alpha}u_{\beta}-Q_{\alpha\beta,\bar{\gamma}}^{\bar{\gamma}}u_{\bar{\alpha}}u_{\bar{\beta}}\big)
-\frac{2n+7}{8(n-1)}Q_{\alpha\gamma}^{\bar{\rho}}Q_{\bar{\beta}\bar{\gamma}}^{\rho}u_{\bar{\alpha}}u_{\beta}
+\frac{3}{2(n-1)}Q_{\alpha\gamma}^{\bar{\rho}}Q_{\bar{\beta}\bar{\rho}}^{\gamma}u_{\bar{\alpha}}u_{\beta}\bigg)\bigg\}dV,
\end{align}
for $\Delta_{b}u=-\lambda_1u$. We use the following lemma to handle the second term in the right hand side.
\begin{lem}\label{lem6.2}
\begin{equation}
\notag -2\int_M\sum\limits_{\alpha}|u_{\alpha}|^2dV=\int_Mu(\Delta_{b}u)dV.
\end{equation}
\end{lem}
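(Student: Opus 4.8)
The plan is to establish this as the standard energy identity for $\Delta_b$ by integrating by parts, and the key preliminary observation is that $u$ is real, so that complex conjugation of indices gives $\overline{u_\alpha}=u_{\bar\alpha}$. Consequently $\sum_\alpha|u_\alpha|^2=\sum_\alpha u_\alpha u_{\bar\alpha}$ (which is exactly $\|\partial_b u\|^2$), and the claim reduces to showing $2\sum_\alpha\int_M u_\alpha u_{\bar\alpha}\,dV=-\int_M u\,\Delta_b u\,dV$. I will also use that $\Delta_b u$ is real, since $\overline{u_{\alpha\bar\alpha}}=u_{\bar\alpha\alpha}$ merely interchanges the two summands in \eqref{DSL}.

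First I would write $u_\alpha=W_\alpha u$ and move this derivative onto the remaining factor using the formal adjoint $W_\alpha^\ast=-W_{\bar\alpha}+\Gamma_{\bar\beta\beta}^\alpha$ of Lemma \ref{lem3.1} (equivalently, the Stokes identity \eqref{eq3.51}); combining with $u_{\alpha\bar\alpha}=W_\alpha(u_{\bar\alpha})-\Gamma_{\alpha\bar\alpha}^{\bar\beta}u_{\bar\beta}$ from \eqref{SD}, this yields, after summation over $\alpha$, the relation $\sum_\alpha\int_M u_\alpha u_{\bar\alpha}\,dV=-\int_M u\sum_\alpha u_{\alpha\bar\alpha}\,dV$. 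Performing the symmetric computation, that is, writing instead $u_{\bar\alpha}=W_{\bar\alpha}u$ and integrating $W_{\bar\alpha}$ by parts together with $u_{\bar\alpha\alpha}=W_{\bar\alpha}(u_\alpha)-\Gamma_{\bar\alpha\alpha}^\beta u_\beta$, gives $\sum_\alpha\int_M u_\alpha u_{\bar\alpha}\,dV=-\int_M u\sum_\alpha u_{\bar\alpha\alpha}\,dV$. Adding the two identities and invoking $\Delta_b u=u_{\alpha\bar\alpha}+u_{\bar\alpha\alpha}$ produces $2\sum_\alpha\int_M u_\alpha u_{\bar\alpha}\,dV=-\int_M u\,\Delta_b u\,dV$, which is the assertion. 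This is obtained by the same integration by parts that proves Corollary \ref{cor3.1}, specialized and simplified by the reality of $u$, the two resulting pairings then coinciding.

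The only step needing genuine care, though it is entirely routine, is the cancellation of the connection-coefficient terms generated by the adjoints and by \eqref{SD}. After summing over $\alpha$, the terms of the form $\Gamma_{\alpha\bar\alpha}^{\bar\beta}u_{\bar\beta}$ and $\Gamma_{\beta\bar\beta}^{\bar\alpha}u_{\bar\alpha}$ (and their conjugates) cancel upon relabeling, using $\Gamma_{\alpha\beta}^\gamma=-\Gamma_{\alpha\bar\gamma}^{\bar\beta}$ from \eqref{eq2.9}, exactly as in the proof of Corollary \ref{cor3.1}. No curvature, torsion, or Tanno-tensor contribution survives, so there is no real analytic obstacle here; the sole point to watch is tracking the conjugations in the sesquilinear pairing \eqref{eq3.5a} and applying $\overline{u_{\bar\alpha\alpha}}=u_{\alpha\bar\alpha}$ correctly.
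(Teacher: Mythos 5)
Your proof is correct and follows essentially the same route as the paper: integrate $W_\alpha$ (resp.\ $W_{\bar\alpha}$) by parts via the formal adjoints of Lemma \ref{lem3.1}, cancel the connection-coefficient terms by relabeling $\alpha\leftrightarrow\beta$, and add the two (mutually conjugate) identities to assemble $\Delta_b u=u_{\alpha\bar\alpha}+u_{\bar\alpha\alpha}$. The only cosmetic difference is that the paper performs one integration by parts and then adds the complex conjugate of the resulting identity, whereas you carry out both symmetric computations explicitly; these are the same argument.
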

\begin{proof}
By Lemma \ref{lem3.1} and \eqref{SD}, we have
\begin{align}
\notag \int_M\sum\limits_{\alpha}|u_{\alpha}|^2dV&=\sum\limits_{\alpha}(u_{\alpha},u_{\alpha})
=\sum\limits_{\alpha}(W_{\alpha}u,u_{\alpha})=\sum\limits_{\alpha}(u,W_{\alpha}^{\ast}u_{\alpha})\\
\notag &=-\sum\limits_{\alpha}(u,W_{\bar{\alpha}}(u_{\alpha}))+\sum\limits_{\alpha,\beta}(u,\Gamma_{\bar{\beta}\beta}^{\alpha}u_{\alpha})\\
\notag &=-\sum\limits_{\alpha}(u,u_{\bar{\alpha}\alpha})-\sum\limits_{\alpha,\beta}(u,\Gamma_{\bar{\alpha}\alpha}^{\beta}u_{\beta})
+(u,\Gamma_{\bar{\beta}\beta}^{\alpha}u_{\alpha})=-\sum\limits_{\alpha}(u,u_{\bar{\alpha}\alpha}).
\end{align}
The summation of this identity and its conjugation gives the result.
\end{proof}
Let $X=u_{\bar{\alpha}}W_{\alpha}\in{T^{(1,0)}M}$. If \eqref{eq1.2} holds, by the definition of Ricci tensor, and $Tor$ given by \eqref{tor1} and $Q_1$, $Q_2$, $Q_3$ given by \eqref{GQ}, we get
\begin{equation}\label{eq6.5}
\begin{aligned}
&R_{\alpha\bar{\beta}}u_{\bar{\alpha}}u_{\beta}
+i(n+1)\big(A_{\bar{\alpha}{\bar{\beta}}}u_{\alpha}u_{\beta}-A_{\alpha\beta}u_{\bar{\alpha}}u_{\bar{\beta}}\big)\\
&+\frac{i}{2}\big(Q_{\bar{\alpha}\bar{\beta},\gamma}^{\gamma}u_{\alpha}u_{\beta}-Q_{\alpha\beta,\bar{\gamma}}^{\bar{\gamma}}u_{\bar{\alpha}}u_{\bar{\beta}}\big)
-\frac{2n+7}{8(n-1)}Q_{\alpha\gamma}^{\bar{\rho}}Q_{\bar{\beta}\bar{\gamma}}^{\rho}u_{\bar{\alpha}}u_{\beta}
+\frac{3}{2(n-1)}Q_{\alpha\gamma}^{\bar{\rho}}Q_{\bar{\beta}\bar{\rho}}^{\gamma}u_{\bar{\alpha}}u_{\beta}
{\ge}\kappa\sum\limits_{\alpha}|u_{\alpha}|^2.
\end{aligned}
\end{equation}
So by Lemma \ref{lem6.2}, \eqref{eq6.4} and \eqref{eq6.5}, we get
\begin{align}
0\ge\int_M\bigg(-\frac{2(n^2-1)}{n(2n+1)}\lambda_1+\frac{2(n-1)}{2n+1}\kappa\bigg)\sum\limits_{\alpha}|u_{\alpha}|^2dV,
\end{align}
i.e. $\lambda_1\ge\frac{\kappa{n}}{n+1}$. Theorem \ref{thm1.2b} is proved.

\bibliographystyle{plain}

\end{document}